\renewcommand\d{\partial}
\renewcommand\a{\alpha}
\renewcommand\b{\beta}
\newcommand\s{\sigma}
\newcommand\R{\mathbb R}\newcommand\N{\mathbb N}
\newcommand\C{\mathbb C}
\newcommand\un{\underline}
\def\g{\gamma}
\def\s{\sigma}
\def\k{\kappa}
\def\l{\lambda}
\def\eps{\varepsilon }
\def\e{\varepsilon}
\def\G{\Gamma}
\def\L{\Lambda}
\def\D{\partial}
\newcommand\kernel{\hbox{\rm Ker}}
\newcommand\br{\begin{rem}}
\newcommand\er{\end{rem}}
\newcommand\bp{\begin{pmatrix}}
\newcommand\ep{\end{pmatrix}}
\newcommand\be{\begin{equation}}
\newcommand\ee{\end{equation}}
\newcommand\ba{\begin{equation}\begin{aligned}}
\newcommand\ea{\end{aligned}\end{equation}}
\newcommand\cU{{\cal  U}}
\newcommand\cR{{\cal  R}}
\newcommand\cS{{\mathcal S}}
\newcommand{\RR}{{\mathbb R}}
\newcommand{\NN}{{\mathbb N}}
\newcommand{\const}{\text{\rm constant}}
\newcommand{\Id}{{\rm Id }}
\newtheorem{defi}{Definition}[section]
\newtheorem{theo}[defi]{Theorem}
\newtheorem{prop}[defi]{Proposition}
\newtheorem{lem}[defi]{Lemma}
\newtheorem{cor}[defi]{Corollary}
\newtheorem{rem}[defi]{Remark}
\newtheorem{ass}[defi]{Assumption}
\newtheorem{ex}[defi]{Example}
\def\op{{\rm op} }
\numberwithin{equation}{section}
\begin{document}

\renewcommand{\refname}{References}

\title{Nash--Moser iteration and singular perturbations}

\author{\sc \small Benjamin Texier\thanks{
Universit\'e Paris Diderot (Paris 7), Institut de Math\'ematiques de Jussieu, UMR CNRS 7586;
texier@math.jussieu.fr:
Research of B.T.  was partially supported
under NSF grant number DMS-0505780. Ce travail a b\'en\'efici\'e d'une aide de l'Agence Nationale de la Recherche portant la r\'ef\'erence ANR-08-BLAN-0301-01
},\,\, Kevin Zumbrun\thanks{Indiana University, Bloomington, IN 47405;
kzumbrun@indiana.edu:
Research of K.Z. was partially supported
under NSF grants no. DMS-0300487 and DMS-0801745.
 }}

\maketitle

\begin{abstract} 
We present a simple and easy-to-use Nash--Moser iteration theorem tailored
for singular perturbation problems admitting a formal asymptotic
expansion or other family of approximate solutions depending on
a parameter $\eps\to 0.$ The novel feature is
to allow loss of powers of $\eps$ as well as the usual
loss of derivatives in the solution operator for 
the associated linearized problem.
We indicate the utility of this theorem 
by describing sample applications to
(i) 
systems of quasilinear Schr\"odinger equations,
 and (ii) existence of
small-amplitude profiles of quasilinear
relaxation systems in the degenerate case that the velocity of the profile is a characteristic mode of the hyperbolic operator. 
\end{abstract} 

\tableofcontents
\begin{section}{Introduction}

Because the expansions themselves furnish arbitrarily accurate
approximate solutions,
and because the associated linearized estimates are often
stiff in terms of amplitude and or smoothness,
Nash--Moser iteration appears particularly well-adapted
to the verification of asymptotic expansions such as arise
in various singular perturbation problems depending on a 
small parameter $\eps\to 0$.
However, standard Nash--Moser theorems allow only for loss
of derivatives and not loss of powers of $\eps$ in the estimates
on the linearized solution operator, so that to apply Nash--Moser
iteration to problems that do lose powers of $\eps$
would appear to require a careful accounting of
constants throughout the entire Nash--Moser iteration
to check that the argument closes.

The purpose of this article therefore is to present 
a simple and general-purpose theorem carrying out this
accounting, which can be applied as an easy-to-use black box to this type
of problem.
We conclude by presenting two sample applications for which both
loss of derivatives and of powers of $\eps$ naturally occur for
the linearized problem, one
for systems of quasilinear Schr\"odinger equations
and one in  existence of
small-amplitude profiles of quasilinear relaxation systems.
The latter, due to M\'etivier, Texier and Zumbrun,
was treated in \cite{MTZ} by the approach presented here; though special cases may be treated by other methods
\cite{YZ,MaZ1,MaZ2,DY}, we do not know of any other solution in the 
generality considered there.

Our approach follows a very simple proof given by
Xavier Saint-Raymond \cite{XSR} of a (parameter-independent)
Nash--Moser implicit function theorem \cite{N,Mo} in a Sobolev space setting.
A novel aspect is our treatment of uniqueness, which we have not seen
elsewhere- in particular the incorporation of a phase condition in the case
that the linearized operator has a kernel. (See Theorem \ref{th2}.)

 We note that a parameter-dependent Nash--Moser scheme was recently used by Alvarez-Samaniego and Lannes \cite{AL} to prove local-in-time well-posed\-ness of model equations in oceanography.
Alvarez-Samaniego and Lannes 
do not allow losses in $\eps$ in the linearized solution operator,
which is the main point here. 

In \cite{IPT}, Iooss, Plotnikov and Tolland use a parameter-dependent Nash-Moser to prove the existence of periodic standing water-waves in the case of an infinite depth. They deal with a situation in which the linearized operator might not be invertible on small sets on $\e,$ and proceed by taking out corresponding bad regions in $\e.$ Here we consider everywhere invertible linearized operators, with losses (materialized by inverse powers of $\e$ in the estimates) for all values of the parameter $\e,$ and proceed by keeping track of these losses. 

An important reference on Nash--Moser-type theorems is 
Hamilton \cite{H}. 
Another good reference is Alinhac and G\'erard's book \cite{AG}.

\medbreak
{\bf Plan of the paper and scheme of the main proof.} 

 In Section \ref{sec:thm}, we state
the main theorem on $\eps$-dependent Nash--Moser iteration,
giving the proof afterward in Section \ref{sec:pf}. 

 The proof classically combines an iteration step (Newton's method) with a regularization procedure (in Sobolev spaces, high-frequency truncation). It is largely based on Xavier Saint-Raymond's elegant and short proof \cite{XSR}. The key in our context is to show that the bounds can be made uniform in the small parameter. Our main observation is that this can be achieved under the condition that the regularizing sequence is diverging to $+\infty$ fast enough, depending on the small parameter (see \eqref{def-theta}).

 In Section \ref{sec:app1}, we describe applications to 
systems of quasilinear Schr\"odinger equations, and %
in Section \ref{sec:app2} to
existence of small-amplitude profiles of 
quasilinear relaxation systems in the degenerate case that the velocity of the relaxation profile is a characteristic mode of the hyperbolic operator.

\end{section}

\section{A simple Nash--Moser theorem}\label{sec:thm}

\subsection{Setting}

 Consider two families of Banach spaces $\{E_s\}_{s \in \R},$ $\{F_s\}_{s \in \R},$ and a family of equations
 \begin{equation} \label{0}
  \Phi^\e(u^\e) = 0, \qquad u^\e \in E_s,
 \end{equation}
 indexed by $\e \in (0,1),$ where for some $m \geq 0,$ $s_0, s_1 \in \R,$ with $s_0 + 2 m \leq s_1,$ for all $\e,$
 \begin{equation} \label{0.1} \Phi^\e \in C^2(E_s,F_{s-m}), \quad \mbox{ for all $s_0 + m \leq s \leq s_1.$}\end{equation}
  Let $| \cdot |_s$ denote the norm
in $E_s$ and $\| \cdot \|_s$ denote the norm
in $F_s.$ The norms $|\cdot|_s$ and $\| \cdot\|_s$ 
and spaces $E_s$ and $F_s$ 
are possibly $\e$-dependent, as in our applications in Sections \ref{sec:app1} and \ref{sec:app2}. We assume that the embeddings
   \begin{equation} \label{sp0} E_{s'} \hookrightarrow E_{s}, \qquad F_{s'} \hookrightarrow F_{s}, \qquad s \leq s',\end{equation}
   hold, and have norms less than one:
   \begin{equation} \label{sp1} |\cdot|_s \leq |\cdot |_{s'}, \quad \| \cdot \|_s \leq \|\cdot \|_{s'}, \qquad s \leq s'.\end{equation}
  We assume the interpolation property:
   \begin{equation} \label{sp2}
    | \cdot|_{s + \s} \lesssim |\cdot|_{s}^{\frac{\s' - \s}{\s'}} |\cdot|_{s + \s'}^{\frac{\s}{\s'}}, \qquad 0 < \s < \s',
       \end{equation}
 where $| u |_s \lesssim | v|_{s'}$ stands for $| u|_s \leq C |v|_{s'},$ for some $C > 0$ possibly depending on $s$ and $s'$ but not on $\e,$ nor on $u$ and $v.$ We assume in addition the existence of a family of regularizing operators
  $$S_\theta: \quad E_s \to E_s, \qquad \theta  > 0,$$
   such that for all $s \leq s',$
 \begin{equation} \label{3}
   | S_\theta u - u |_{s} \lesssim \theta^{s - s'} | u |_{s'}.
 \end{equation}
\begin{equation} \label{4}
   | S_\theta u |_{s'} \lesssim (1 + \theta^{s' - s}) | u |_{s}.
 \end{equation}

 \begin{ex} \label{ex1} Let 
 $$E_s = H^s(\R^d), \qquad F_s = H^{s}(\R^d) \times H^{s+1}(\R^d),$$
 with $\e$-dependent norms defined by 
 \begin{equation} \label{hse} | v |_s := \| v \|_{H^s_\e} :=  \| (1 + | \e \xi|^2)^{s/2} ({\cal F} v)(\xi) \|_{L^2(\R^d_\xi)}, 
 \qquad \| (f,g) \|_{s} = |f|_{s} + |g|_{s+1},
 \end{equation}
 where ${\cal F} v$ denotes the Fourier transform of $v.$ 
 Then \eqref{sp0}, \eqref{sp1} and \eqref{sp2} hold.
A family of regularizing operators $E_s \to E_s$ is given by
 $$ S_\theta: \quad u \to S_\theta(u) := {\cal F}^{-1} \big( \chi(\theta^{-1} \xi) \hat u \big),$$
 where $\chi: \R^d \to [0,1]$ is a smooth truncation function, identically equal to $1$ for $|\xi| \leq 1,$ and identically equal to $0$ for $|\xi| \geq 2.$ 
 The family $\{S_\theta\}_{\theta > 0}$ satisfies \eqref{3} and \eqref{4}. 
 \end{ex}
 
  \begin{rem} \label{rem:Moser} The family of norms $\| \cdot \|_{H^s_\e}$ satisfies Moser's inequality
 $$ \| u v \|_{H^s_\e} \lesssim |u|_{L^\infty} \| v \|_{H^s_\e} + \| u \|_{H^s_\e} |v|_{L^\infty}, \qquad s \geq 0, \quad u, v \in H^s \cap L^\infty,$$
 and, if $F$ is smooth and satisfies $F(0) = 0,$ for some non-decreasing $C: \R_+ \to \R_+,$
 $$ \|F(u)\|_{H^s_\e} \leq C(|u|_{L^\infty}) \|u\|_{H^s_\e}, \qquad s > d/2, \quad u \in H^s.$$
 \end{rem}

  \begin{ex} \label{ex2} Consider the family of maps 
  $$\Phi^\e(u) = \big(\sum_{1 \leq j \leq d} A_j(u) \e \d_{x_j} u, u\, \big),$$
   where $A_j: u \in \R^n \to A_j(u) \in \R^{n \times n}$ is smooth. By Moser's inequality (Remark {\rm \ref{rem:Moser}}), for $s > 1 + d/2,$ the application $\Phi^\e$ maps smoothly $H^{s}$ to $H^{s-1} \times H^{s},$ that is, with the notation of Example {\rm \ref{ex1}}, $E_s$ to $F_{s-1}.$ 
   \end{ex}

 \subsection{First assumption: tame direct bounds}
 We assume bounds for $\Phi^\e$ and its first two derivatives.

      \begin{ass} \label{ass1} For some $\g_0 \geq 0,$ $\g_1 \geq 0,$  
 for all $s$ such that $s_0 + m \leq s \leq s_1 - m,$  for all $u,v,w \in E_{s+m},$ there holds
  \begin{equation} \label{1}
  \| \Phi^\e(u)\|_s \leq {\bf C}_0 (1 + | u |_{s+m})
  \end{equation} 
  \begin{equation}
  \| (\Phi^\e)'(u) \cdot v\|_s  \leq {\bf C}_1 ( \e^{-\g_1} |v|_{s_0 + m} |u|_{s +m} + |v|_{s+m}), \label{1'} 
  \end{equation}
  \begin{equation} \begin{aligned}
  \label{1''} \| (\Phi^\e)''(u) \cdot (v, w)\|_s  \leq {\bf C}_2 \Big( & \e^{-2 \g_1} |v|_{s_0 + m} |w|_{s_0 + m}|u|_{s + m} \\ & \quad + \e^{-\g_1} |w|_{s_0+m} |v|_{s + m} + \e^{-\g_1} | v|_{s_0+m} |w|_{s + m}\Big), \end{aligned}
  \end{equation}
    where the functions
    ${\bf C}_j = {\bf C}_j\big(\e, (|u|_{s_0 + \tilde m})_{0 \leq \tilde m \leq m}\big)$ satisfy
   $$ %
  \sup \Big\{ {\bf C}_j, \quad j = 0,1,2, \, \e \in (0,1), \,|u|_{s_{0} + m} \lesssim \e^{\gamma_0} \Big\}  < + \infty.
  $$ %
   \end{ass}
   
  The simplest example is given by a product map:
 
\begin{ex} \label{ex:product} Consider the map $\Phi_0: u \in H^s(\R^d;\R) \to (u^2,u) \in H^s \times H^s(\R^d;\R),$ for $d/2 < s.$ There holds, by Remark {\rm \ref{rem:Moser}},
$$ \|\Phi_0(u) \|_{H^s_\e \times H^s_\e} \lesssim |u|_{L^\infty} \|u\|_{H^s_\e} + \| u \|_{H^s_\e},$$
where the weighted norm $\| \cdot \|_{H^s_\e}$ is defined in \eqref{hse}.
 For small $\e,$ the classical Sobolev embedding $H^s \hookrightarrow L^\infty,$ for $d/2 < s_0 \leq s,$ has a large norm if $H^s$ is equipped with the weighted norm $\| \cdot \|_{H^s_\e}:$ 
  $$|u|_{L^\infty} \lesssim \e^{-d/2} \| u \|_{H^{s_0}_\e},$$
  so that 
$$ \| \Phi_0(u) \|_{H^s_\e \times H^s_\e} \lesssim (1 + \e^{-d/2} \| u \|_{H^{s_0}_\e}) \| u \|_{H^s_\e},$$
and
$$ \| \Phi'_0(u) \cdot v \|_{H^s_\e \times H^s_\e} \lesssim (1 + \e^{-d/2} \| u \|_{H^{s_0}_\e}) \| v \|_{H^s_\e} + \e^{-d/2} \| v \|_{H^{s_0}_\e} \| u \|_{H^s_\e}.$$
In particular, Assumption {\rm \ref{ass1}} holds with $m = 0,$ $\g_0 = \g_1 = d/2.$ 
\end{ex} 
   
   Another simple example is given by the map defined in Example \ref{ex2}:
   
 \begin{ex} \label{ex3} Consider the map $\Phi^\e: E_s \to F_{s-1},$ introduced in Example {\rm \ref{ex2}}, where the families of functional spaces $E_s$ and $F_{s}$ and their $\e$-dependent norms were introduced in Example {\rm \ref{ex1}}. Let $d/2 < s_0 < 1 + d/2$ be given. Just like in the above Remark, for small $\e,$ the classical Sobolev embedding 
  $$ 
  H^{N + s_0}(\R^d) \hookrightarrow W^{N,\infty}(\R^d), \qquad \quad N \in \N,
  $$ %
 has a large norm
 \begin{equation} \label{embed-norm}
  |v|_{W^{N,\infty}} \lesssim \e^{-N-d/2} |v|_{N + s_0},
  \end{equation}
 where $|\cdot |_s = \| \cdot \|_{H^s_\e}$ is defined in \eqref{hse}. By Remark {\rm \ref{rem:Moser}}, for $s \geq s_0,$
  $$ | A_j(u) \e \d_{x_j} u |_s \leq C_{A_j}(|u|_{L^\infty}) (|u|_{s+1} + |\e \d_{x_j} u|_{L^\infty} |u|_s ),$$
  where $C_{A_j}: \R_+ \to \R_+$ is non-decreasing and depends only on $A_j,$ $s$ and $d.$ By \eqref{embed-norm}, for $s \geq s_0 + 1,$
 $$  | A_j(u) \e \d_{x_j} u |_s \leq C_{A_j}\big( \e^{-d/2} | u |_{s_0}\big)(1 + \e^{-d/2} |u|_{s_0 + 1})   | u |_{s+1},$$
  and \eqref{1} holds  with 
 \begin{equation} \label{cphi} {\bf C}_0 = 1 + \sum_{1 \leq j \leq d} C_{A_j}\big( \e^{-d/2} | u |_{s_0}\big)(1 + \e^{-d/2} |u|_{s_0 + 1}) .
 \end{equation} 
  Besides,
  $$ \begin{aligned}
  \| (\Phi^\e)'(u) v \|_s & \leq \sum_{1\leq j \leq d} | (A_j'(u) \cdot v) \e\d_{x_j} u |_s + | A_j(u) \e \d_{x_j} v |_s + |v|_{s+1} \\
   & \leq \sum_{1 \leq j \leq d} C_{A_j,A'_j}(|u|_{L^\infty}) \big( \e^{-d/2} |v|_{s_0 + 1} | u |_{s+1} + (1 + \e^{-d/2} | u |_{s_0+1}) |v|_{s+1}\big) \end{aligned}$$
 and \eqref{1'} holds $\g_0 = \g_1 = d/2$ and 
  $$ {\bf C}_1 = 1 + \sum_{1 \leq j \leq d} C_{A_j,A'_j}\big( \e^{-d/2} | u |_{s_0}\big)(1 + \e^{-d/2} |u|_{s_0 + 1}).$$  %
  The bound for $(\Phi^\e)''$ is similar. We conclude that Assumption {\rm \ref{ass1}} holds with $\g_0 = \g_1 = d/2.$
 \end{ex}

  \begin{rem} In connection with \eqref{embed-norm}, where the embedding constant blows up to $+\infty$ as $\e$ decreases to $0,$  
the ``constants" ${\bf C}_j$ in Assumption {\rm \ref{ass1}} are \emph{not} assumed to be non-decreasing in their arguments; in Example {\rm \ref{ex3}} this is reflected in \eqref{cphi}.
\end{rem}

 \begin{ex} \label{ex4} Given $T > 0,$ consider the functional spaces
 $$ E_s =  C^1([0, T], H^{s-1}(\R^d)) \cap C^0([0, T], H^{s}(\R^d)), \quad  F_{s-1}  =  C^0([0, T], H^{s-1}(\R^d)) \times H^{s}(\R^d),$$
 with norms
 $$ | u |_{s} := \sup_{0 \leq t \leq T} ( \| \e \d_t u(t) \|_{H^{s-1}_\e} + \| u(t) \|_{H^s_\e} ), \quad  \| (f_1,f_2) \|_{s-1}  := \sup_{0 \leq t \leq T} \| f_1(t)\|_{H^{s-1}_\e} + \| f_2\|_{H^{s}_\e},$$
 where the weighted Sobolev $\| \cdot \|_{H^s_\e}$ norms are defined in \eqref{hse}.

 Let $s_0$ and $s_1$ be given such that $d/2 < s_0 < 1 + d/2 < s_1,$ with $s_0 + 2 \leq s_1.$ Let a bounded family $(f^\e)_{\e \in (0,1)} \subset F_{s_1 - 1};$ meaning $f^\e \in F_{s_1 - 1}$ for all $\e,$ with $\sup_{\e \in (0,1)} \| f^\e \|_{s_1 - 1} < \infty.$
 
   Consider the family of maps defined by 
  \begin{equation} \label{phie} \Phi^\e(u) := \Big(\e \d_t u + \sum_{1 \leq j \leq d} A_j( u) \e \d_{x_j} u - f_1^\e\, , \, u_{|t = 0} - f_2^\e\Big),
  \end{equation}
 where the maps $A_j: \, u \in \R^n \to A_j(u) \in \R^{n \times n}$ are smooth. Then, $\Phi^\e$ maps $E_s$ to $F_{s-1}$ for $s_0 + 1 \leq s \leq s_1,$ and we can check exactly as in Example {\rm \ref{ex3}} that Assumption {\rm \ref{ass1}} holds with $\g_0 = \g_1 =  d/2.$
  \end{ex}

\begin{rem}\label{rem:ass1-shift} Assumption {\rm \ref{ass1}} is stable by shifts that are both smooth and small, in the following sense: if $\Phi^\e$ satisfy Assumption {\rm \ref{ass1}}, with associated parameters $m,$ $s_0,$ $s_1,$ and $\g_0,$ and if $(a^\e)_{\e \in (0,1)} \subset E_{s_1},$ with $\sup_{\e \in (0,1)} \e^{-\g_0} |a^\e|_{s_0+m} < \infty,$ then $\tilde \Phi^\e := \Phi^\e(a^\e + \cdot)$ satisfies Assumption {\rm \ref{ass1}} with the same parameters. \end{rem}

\subsection{Second assumption: tame inverse bounds}

Our second and key assumption states that if $u$ is small enough in some ``low" norm, then $(\Phi^\e)'(u)$ has a right inverse, with a right inverse bound \eqref{2} that is possibly stiff with respect to $\e$ and may show a loss of derivatives: 
 
\begin{ass} \label{ass2} For some $r \geq 0,$ $r' \geq 0,$ there holds $s_0 + m \leq s_1 - \max(m,r),$ and for some $\g \geq 0,$ $\k \geq 0,$ for all $s$ such that $s_0 +m \leq s \leq s_1 - \max(m,r),$ 
   for all $u \in E_{s + r}$ such that
\begin{equation} \label{cond1}
 | u |_{s_0 + m + r} \lesssim \e^{\gamma},
 \end{equation}
 the map $(\Phi^\e)'(u): E_{s+m} \to F_s$ has a right inverse $\Psi^\e(u):$
  $$ (\Phi^\e)'(u) \Psi^\e(u) = {\rm Id}: \quad F_s \to F_s,$$
   satisfying, for all $g \in F_{s+r'},$
 \begin{equation} \label{2}
  | \Psi^\e(u) g|_s \leq {\bf C}  \e^{-\k} ( \|g\|_{s_0 + m + r'} | u|_{s+r} + \|g\|_{s + r'}),
 \end{equation}
where
 ${\bf C} = {\bf C}(\e,|u|_{s_0 + m + r})$ satisfies
 $$ \sup \big\{{\bf C}, \quad \e \in (0,1), \, |u|_{s_0 + m + r} \lesssim \e^\g \big\} < \infty.$$
 \end{ass}

 \begin{rem}[On stiffness of the right-inverse bound \eqref{2}] The right-inverse bound \eqref{2} is stiff with respect to $\e$ if $\k > 0.$ The case $\k = 0$ corresponds to no loss in $\e$ and is covered for instance by the result of Alvarez-Samaniego and Lannes {\rm \cite{AL}.} 
 \end{rem}

 \begin{rem}[On losses of derivatives in the right-inverse bound \eqref{2}] The loss of derivatives is parameterized by $r$ and $r'.$ Estimate \eqref{2} states indeed that we can solve the linearized equation $(\Phi^\e)'(u) v = g,$ with a bound for the solution $v = \Psi^\e(u) g$ that gives a control of the low norm $|v|_s$ in terms of high norms, $|u|_{s+r}$ and $\| g \|_{s+r'},$ of the background and source.
 
  The case $r = r' = 0$ corresponds to no loss and can typically be handled by Picard iteration, as in the classical existence proof for quasilinear symmetric systems.

  Estimate \eqref{2} in Assumption 
{\rm \ref{ass2}} is \emph{tame} with respect to $r$ and $r'.$ 
This is essential:
one may check that the proof of Theorem {\rm \ref{th1}} (given in Section {\rm \ref{sec:pf}}) collapses if it is not.

The distinction between $r$ and $r'$ is somewhat illusory, since
we can always redefine
 $F_s$ and $m,$ so that $r' = 0.$ 
We also note that the proof with $r' \neq 0$ is the same as with $r' = 0$.
 \end{rem}

 A simple example of a family of maps satisfying Assumption \ref{ass2} is given by systems of symmetric quasi-linear equations in a semi-classical setting, as detailed in the following Example. (This example is meant to test our theory in the favorable context of symmetric quasi-linear systems, where a Lax iteration scheme is certainly preferable to a Nash-Moser scheme.)

\begin{ex} \label{sec:app0} Consider the functional spaces and the family of maps introduced in Example {\rm \ref{ex4}}. Again, $T > 0$ and $d/2 < s_0 < 1 +  d/2 < s_1,$ are given, with $s_1$ measuring the regularity of $(f_1^\e, f_2^\e),$ and $s_0 + 2 \leq s_1.$  Then, as discussed in Example {\rm \ref{ex4}}, Assumption {\rm \ref{ass1}} holds with $\g_0 = \g_1 = d/2$ and $m =1.$ 

 Given $s$ such that $s_0 + 1 \leq s \leq s_1 -1,$ and $u \in E_{s+1},$ $g = (g_1,g_2) \in F_{s},$ consider the equation $(\Phi^\e)'(u) v = g,$ corresponding to the linearized initial-value problem   
 \begin{equation} \label{lin-ivp} \left\{ \begin{aligned} \e \d_t v + \sum_{1 \leq j \leq d} A_j( u) \e \d_{x_j} v & = g_1 - \sum_{1 \leq j \leq d} A'_j(u) v \e \d_{x_j} u, \\ v_{|t = 0} & = g_2. \end{aligned}\right.
 \end{equation}
 Assume that the maps $A_j$ take values in the symmetric matrices. Then, by the classical linear hyperbolic theory, there exists a unique $v \in E_s$ solution of \eqref{lin-ivp}. We now show that $v$ satisfies an estimate of the form \eqref{2}.
 
 The classical commutator estimate gives, for $0 \leq |\a| \leq s$ and $w \in H^s,$
  \begin{equation} \label{fried} 
 \Re e\, \Big( (\e\d_x)^\a A_j(u) \e \d_{x_j} w, (\e \d_x)^\a w\Big)_{L^2} \leq C(|u|_{L^\infty})\Big( |\e\d_x u|_{L^\infty} \| w \|_{H^s_\e}^2 + \| u \|_{H^s_\e} |w|_{L^\infty} \| w \|_{H^s_\e}\Big),
  \end{equation}
  where the weighted Sobolev norm $\| \cdot \|_{H^s_\e}$ is defined in \eqref{hse}. Besides, by Remark {\rm \ref{rem:Moser}},
 \begin{equation} \label{fried2}
  \| A'_j(u) w \e \d_{x_j} u \|_{H^s_\e} \leq C(|u|_{L^\infty}) \Big( |\e \d_{x_j} u |_{L^\infty} \big( \| w \|_{H^s_\e} + \| u \|_{H^s_\e} |w|_{L^\infty}\big) + \| u \|_{H^{s+1}_\e} | w |_{L^\infty} \Big).
  \end{equation}
  If we now use \eqref{embed-norm} with $N = 0,1,$ together with estimates \eqref{fried} and \eqref{fried2}, we find that the solution $v$ to \eqref{lin-ivp} satisfies
  \begin{equation} \label{fried3}
  \begin{aligned} \| v(t) \|_{H^s_\e}^2  \leq \| g_2 \|^2_{H^s_\e} & + \int_0^t \e^{-1} \| g_1 \|_{H^s_\e} \| v\|_{H^s_\e} \, dt' \\ &  + \int_0^t  C\big(\e^{-1-d/2} \|u\|_{H^{s_0+1}_\e}\big) \|v\|_{H^s_\e}^2 \, dt'  \\ & + \int_0^t  C\big(\e^{-d/2} \|u\|_{H^{s_0+1}_\e}\big) \e^{-1-d/2} \|u\|_{H^{s+1}_\e} \|v\|_{H^{s_0}_\e} \| v\|_{H^s_\e} \, dt'.\end{aligned}
  \end{equation}
 We now restrict to a background $u \in H^{s+1}$ satisfying 
 \begin{equation} \label{s0+2} |u|_{s_0+2} \lesssim \e^{1+d/2}.
 \end{equation}
  Then, by Gronwall's Lemma, estimate \eqref{fried3} used with $s = s_0,$ implies the bound
 \begin{equation} \label{vs0} \max_{[0,T]} \|v\|_{H^{s_0}_\e} \lesssim \|g_2\|_{H^{s_0}_\e}  + \e^{-1} \max_{[0,T]} \|g_1\|_{H^{s_0}_\e},
 \end{equation}
 which we use back in \eqref{fried3} to obtain
 \begin{equation} \label{fried4} 
 \begin{aligned}
 \max_{[0,T]} \| v\|_{H^s_\e} \lesssim \|g_2\|_{H^s_\e} & + \e^{-1} \max_{[0,T]} \| g_1 \|_{H^s_\e} \\ & + \e^{-1-d/2} \big( \|g_2\|_{H^{s_0}_\e}  + \e^{-1} \max_{[0,T]} \|g_1\|_{H^{s_0}_\e}\big) \max_{[0,T]} \|u\|_{H^{s+1}_\e}.
  \end{aligned}
 \end{equation}
 By using equation \eqref{lin-ivp}{\rm (i)} directly, we find the bound
  \begin{equation} \label{fried5}
   \| \e \d_t v \|_{H^{s-1}_\e} \lesssim C(|u|_{L^\infty},|\e\d_x u|_{L^\infty})\Big( \| v \|_{H^s_\e} + \| u \|_{H^s_\e}( |v|_{L^\infty} + |\e\d_x v |_{L^\infty})\Big).
   \end{equation}
 Note in \eqref{fried5} the occurence of $|\e\d_x v|_{L^\infty},$ which cannot be controlled by \eqref{vs0}. This forces us to go back to \eqref{fried3} with $s = s_0 + 1.$ At this point we make full use of \eqref{s0+2}, while a bound for $|u|_{s_0+1}$ sufficed in order to estimate $\| v \|_{H^s_\e},$ and obtain
 \begin{equation} \label{vs0+1}
  \max_{[0,T]} \| v \|_{H^{s_0 + 1}_\e} \lesssim \|g_2\|_{H^{s_0+1}_\e}  + \e^{-1} \max_{[0,T]} \|g_1\|_{H^{s_0+1}_\e}.
  \end{equation}
 Combining \eqref{fried4}, \eqref{fried5} and \eqref{vs0+1}, we obtain that, if $u$ satisfies \eqref{s0+2}, then there holds
 $$ |v|_s \lesssim \e^{-2 - d/2} \| g \|_{s_0+1} |u|_{s+1} + \e^{-1} \| g \|_{s}.$$
We can conclude that if for all $j,$ $A_j$ is symmetric, then the family of maps and functional spaces defined in Example {\rm \ref{ex4}} satisfy Assumption {\rm \ref{ass2}} with $\g = 1 + d/2,$ $\k = 2 + d/2,$ $r = 1$ and $r' = 0.$ 
  \end{ex}

\begin{rem} \label{rem:ass2-shift} If $\Phi^\e$ satisfies Assumption {\rm \ref{ass2}} with parameters $\g,$ $\k,$ $r,$ $r',$ given a family $(a^\e)_{\e \in (0,1)} \subset E_{s_1},$ if $\sup_{\e \in (0,1)} \e^{-\g} |a^\e|_{s_0 + m + r} < \infty,$ then $\tilde \Phi^\e := \Phi^\e(a^\e + \cdot)$ satisfies Assumption {\rm \ref{ass2}}, with the same parameters as $\Phi^\e.$ 
\end{rem}

\subsection{Third assumption: existence of an approximate solution}

 We consider a family of maps $\Phi^\e: E_s \to F_{s-m}$ satisfying Assumptions \ref{ass1} and \ref{ass2}, with associated parameters $m,$ $s_0, s_1,$ and $\g_0,$ $\g_1,$ $\g,$ $r, r',$ and $\k.$  

\begin{ass} \label{ass-wkb} Let $k$ such that 
 \begin{equation} \label{ass-k}
  \max(\k + \g_0, 2\k + \g_1, \k + \g) < k.
  \end{equation}
  For some positive function $\bar p = \bar p(m,r,r',\g_0, \g_1, \g, \k, k) \geq 2 m + \max(r,r')$ specified in Remark {\rm \ref{rem3}}, there holds
  \begin{equation} \label{low-s1}
  s_0 + m + \max(r,r') + \bar p < s_1,
  \end{equation}
  and, for some $s$ satisfying 
 \begin{equation} \label{ass-s}
  s_0 + m + \max(r,r')  \leq s < s_1 - \bar p,
  \end{equation}
  there holds
  \begin{equation} \label{wkb3}
 \|\Phi^\e(0)\|_{s} \lesssim \e^{k}.\end{equation}
  \end{ass}

 We first comment on \eqref{low-s1}:

\begin{rem} \label{rem:reg} In Example {\rm \ref{sec:app0}}, the index $s_1$ measures the regularity of the source $f_1^\e$ and the initial datum $f_2^\e;$ in this view inequality \eqref{low-s1} should be understood as a regularity requirement on the data. In particular, as discussed in Remark {\rm \ref{rem3}}, as $k$ approaches from above the limiting value $\max(\k + \g_0, 2 \k + \g_1,\k + \g),$ the parameter $\bar p$ blows up to $+\infty,$ meaning that we require the data to be infinitely regular in this limit.  
\end{rem}
  
  In our examples, inequality \eqref{wkb3} reflects the existence of a family of approximate solutions to $\Phi^\e = 0:$ 
 
\begin{rem} \label{r-app} Consider a family of maps $\Phi^\e \in C^2(E_s,F_{s-m})$ satisfying Assumptions {\rm \ref{ass1}} and {\rm \ref{ass2}}, and an associated family of approximate solutions $u_a^\e \in E_{s_1}$ to the equations $ \Phi^\e = 0,$ in the sense that $ \| \Phi^\e(u_a^\e) \|_{s} \lesssim \e^k,$
 with $k$ and $s$ satisfying \eqref{ass-k}-\eqref{ass-s}.
 Then, the maps $\tilde \Phi^\e := \Phi^\e(u_a^\e + \cdot)$ satisfy Assumption {\rm \ref{ass-wkb}}. 
\end{rem}

 For quasilinear systems, small initial data give crude examples of approximate solutions:

\begin{ex} \label{ex:rescaled} Consider the initial-value problem 
 \begin{equation} \label{ql-ivp} \d_t u + \sum_{1 \leq j \leq d} A_j(u) \d_{x_j} u = 0, \qquad u_{|t = 0} = \e^\s a(x), 
 \end{equation}
 and the associated family of maps $\Phi^\e$ defined in \eqref{phie}, with $(f_1^\e, f_2^\e) \equiv (0,\e^\s a).$ Assume that $a \in H^{s_1},$ with $s_1$ satisfying \eqref{low-s1}. As described in Examples {\rm \ref{ex4}} and {\rm \ref{sec:app0}}, if $A_j$ is symmetric for all $j,$ then Assumptions {\rm \ref{ass1}} and {\rm \ref{ass2}} hold for $\Phi^\e$ in the functional spaces $E_s,$ $F_s$ introduced in Example {\rm \ref{ex4}}, for any $T > 0,$ for $s$ satisfying \eqref{ass-s}. 
 
  If the maps $u \to A_j(u)$ satisfy the bounds $|A_j(u)| \lesssim |u|^\ell,$ for $\ell \geq 0,$ in particular if they are $\ell$-homogeneous, then there holds $\| \Phi^\e(\e^\s a) \|_{s_1-1} \lesssim \e^{\s(1 + \ell) + 1}.$ Using Remark {\rm \ref{r-app}} above, and the specific values of $\g_0,$ $\g_1,$ $\g$ and $\k$ given in Examples {\rm \ref{ex4}} and {\rm \ref{sec:app0}}, this implies that if $\s(1 + \ell) > 3(1 + d/2),$ then Assumption {\rm \ref{ass-wkb}} is satisfied by $\tilde \Phi^\e := \Phi^\e(\e^\s a + \cdot).$ 
  
\end{ex}

\subsection{Results}

  Our main result gives existence in $E_{s+m},$ where $s$ satisfies \eqref{ass-s}, of a solution to equation \eqref{0}. 
  
  \begin{theo}[Existence] \label{th1} Under Assumptions {\rm \ref{ass1},} {\rm \ref{ass2}} and {\rm \ref{ass-wkb}}, for $\e$ small enough, there exists a real sequence $\theta_j^\e,$ satisfying $\theta_j^\e \to +\infty$ as $j \to +\infty$ and $\e$ is held fixed, such that the sequence
 \begin{equation} \label{iteration} u^\e_0 := 0, \qquad u^\e_{j+1} := u_j^\e + S_{\theta_j^\e} v_j^\e, \qquad v_j^\e := - \Psi^\e(u_j^\e) \Phi^\e(u_j^\e),
 \end{equation}
  is well defined and converges, as $j \to \infty$ and $\e$ is held fixed, to a solution $u^\e$ of \eqref{0} 
in $s +m$ norm, with $s$ as in \eqref{ass-s}, which satisfies the bound
 \begin{equation} \label{est-th}
 | u^\e |_{s} \lesssim \e^{k -\k}. 
 \end{equation}
\end{theo}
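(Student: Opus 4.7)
The plan is to execute the Newton iteration \eqref{iteration} together with the smoothing operators $S_{\theta_j^\e}$ in the spirit of Saint-Raymond \cite{XSR}, but with careful tracking of the $\e$-dependence. First I would fix $N > 1$ and set the regularizing sequence $\theta_j^\e := \theta_0^\e N^j$, where the initial size $\theta_0^\e$ is chosen as a suitably large negative power of $\e$ (this is the role of the relation referenced as \eqref{def-theta}). Writing $\Delta u_j := u_{j+1}^\e - u_j^\e = S_{\theta_j^\e} v_j^\e$, I would prove by induction on $j$ a package of estimates $(H_j)$ that simultaneously: (a) ensure the smallness $|u_j^\e|_{s_0+m+r} \lesssim \e^\g$ required to apply the right-inverse of Assumption \ref{ass2}; (b) bound a tame high-norm $|u_j^\e|_{s_0+m+r+\bar p}$ by a constant, needed to control interpolation remainders; (c) give a geometric decay $\|\Phi^\e(u_j^\e)\|_{s} \lesssim \e^k q^j$ of the equation residual for some $q \in (0,1)$; and (d) a corresponding decay $|\Delta u_j|_{s_0+m+r} \lesssim \e^{k-\k} q^j$ of the increments.

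The heart of the induction is to bound $\Phi^\e(u_{j+1}^\e)$ from $(H_j)$. Using the second-order Taylor expansion
$$\Phi^\e(u_{j+1}^\e) = \Phi^\e(u_j^\e) + (\Phi^\e)'(u_j^\e) S_{\theta_j^\e} v_j^\e + Q_j,$$
where $Q_j$ is the quadratic remainder given by Assumption \ref{ass1}, and exploiting the defining identity $(\Phi^\e)'(u_j^\e) v_j^\e = -\Phi^\e(u_j^\e)$, one obtains the telescoping rewrite
$$\Phi^\e(u_{j+1}^\e) = (\Phi^\e)'(u_j^\e)(S_{\theta_j^\e} - \mathrm{Id}) v_j^\e + Q_j.$$
The first term is then estimated by combining the bound \eqref{1'} with the smoothing estimates \eqref{3}--\eqref{4}, the interpolation property \eqref{sp2}, and the tame inverse bound \eqref{2} (which costs a factor $\e^{-\k}$); the net effect is a factor of the form $\e^{-\k} \theta_j^{-\alpha}$ times the residual in a higher norm, for some $\alpha > 0$ coming from the gap between the low and high regularity indices. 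The quadratic term $Q_j$ is controlled via \eqref{1''}, producing factors $\e^{-2\g_1}|v_j|^2 \lesssim \e^{-2\g_1-2\k}(\e^k q^j)^2 = \e^{2(k-\k-\g_1)} q^{2j}$, and analogously for the lower-order quadratic contributions scaling as $\e^{-\g_1 - \k} \e^k q^j$. The strict inequality \eqref{ass-k}, $\max(\k + \g_0,\, 2\k + \g_1,\, \k + \g) < k$, is precisely what guarantees each of these $\e$-losses is strictly beaten by the starting residual $\e^k$, while the regularity budget $\bar p$ of \eqref{ass-s} absorbs the powers of $\theta_j$ produced by interpolation.

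With $q \in (0,1)$ sufficiently close to $1$, $N > 1$ slightly larger than $1$, and $\theta_0^\e$ chosen as a sufficiently large negative power of $\e$, the accounting closes: $(H_{j+1})$ follows from $(H_j)$, and for $j = 0$ the starting bound (c) is provided by Assumption \ref{ass-wkb}. Convergence then follows by standard arguments: item (d) gives that $\{u_j^\e\}$ is Cauchy in $|\cdot|_{s_0+m+r}$, hence converges to some $u^\e$; the tame high-norm bound (b) and interpolation upgrade the convergence to $|\cdot|_{s+m}$ norm; item (c) together with continuity of $\Phi^\e$ on $E_{s+m}$ yields $\Phi^\e(u^\e) = 0$; and summing $|\Delta u_j|_{s}$ (using \eqref{4} on $S_{\theta_j^\e} v_j^\e$) gives the bound \eqref{est-th}.

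The main obstacle is the bookkeeping underlying $(H_j)$: one must choose $\theta_0^\e$ large enough in $\e^{-1}$ to beat the losses $\e^{-\k}$ and $\e^{-\g_j}$ in the linearized and quadratic terms, yet compatible with the interpolation gain $\theta_j^{-\alpha}$ not overwhelming the initial residual $\e^k$. The scalings are possible precisely because of the strict inequality \eqref{ass-k}, with $\bar p$ in \eqref{low-s1}--\eqref{ass-s} tuned to the gap $k - \max(\k+\g_0,2\k+\g_1,\k+\g)$; verifying that a single choice of $\theta_0^\e$ and $N$ works uniformly in $j$ and for all small $\e$ is the delicate point that distinguishes our argument from the classical parameter-free Nash--Moser scheme.
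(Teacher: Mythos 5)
The architecture of your proposal---regularized Newton iteration with a telescoping Taylor expansion, $\e$-accounting driven by \eqref{ass-k}, interpolation between low and high norms---is the right one and parallels the paper's. But two linked choices in the induction package $(H_j)$ do not close, and fixing them requires a qualitatively different regularizing sequence.

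First, your hypothesis (b) that the high norm $|u_j^\e|_{s_0+m+r+\bar p}$ stays bounded by a constant cannot be propagated. Each step adds $S_{\theta_j}v_j$, and $|S_{\theta_j}v_j|_{s+p} \lesssim \theta_j^{m'}|v_j|_{s+p-m'}$ with $m' := \max(m+r',r) > 0$. The tame inverse bound \eqref{2} composed with the tame direct bound \eqref{1} controls $|v_j|_{s+p-m'}$ only by $\e^{-\k}(1+|u_j|_{s+p})$---and this does \emph{not} decay in $j$, since $\|\Phi^\e(u_j)\|$ at high regularity is estimated via \eqref{1} by $1+|u_j|$ at high regularity---so the best available estimate on the high-norm increments is of order $\e^{-\k}\theta_j^{m'}(1+|u_j|_{s+p})$. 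Since $\theta_j\to+\infty$, $|u_j|_{s+p}$ must be allowed to grow. The paper carries instead the growth bound $|u_j|_{s+p}\lesssim\theta_j^N$ (condition $\mathcal{C}_2(j)$(iii) in Section \ref{sec:pf}).

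Second, geometric growth $\theta_j^\e=\theta_0^\e N^j$ is incompatible with propagating any bound $|u_j|_{s+p}\lesssim\theta_j^N$. The high-norm recursion requires $\e^{-\k}\theta_j^{N+m'}\lesssim\theta_{j+1}^N$, which with a bounded ratio $\theta_{j+1}/\theta_j$ forces $\e^{-\k}\theta_j^{m'}$ to remain bounded, contradicting $\theta_j\to+\infty$. This is precisely why the paper takes $\theta_0=\e^{-k}$ and $\theta_{j+1}=\theta_j^\zeta$ with $\zeta>1$ (the relation \eqref{def-theta}), i.e.\ the doubly exponential sequence $\theta_j=\e^{-k\zeta^j}$, so that the ratio $\theta_{j+1}/\theta_j=\theta_j^{\zeta-1}$ itself diverges and can absorb the recurring $\e^{-\k}\theta_j^{m'}$ loss uniformly in $j$. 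Correspondingly, the residual decay in your item (c) is not the geometric $\e^k q^j$ but the far faster $\|\Phi^\e(u_j^\e)\|_s\lesssim\theta_j^{-1}=\e^{k\zeta^j}$ (this is $\mathcal{C}_2(j)$(ii)), reflecting the quadratic convergence of Newton's method tempered by the regularization error.

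With these corrections---doubly exponential $\theta_j$, $\theta_j^N$ growth of the high norm, $\theta_j^{-1}$ decay of the residual---the rest of your sketch (the role of \eqref{low-s1}--\eqref{ass-s} in budgeting $\bar p$, the interpolation step, the Cauchy argument in a low norm, the final bound by summing $|S_{\theta_j}v_j|_s$) is consistent with the paper's Lemmas \ref{l1}--\ref{l3} and their assembly at the end of Section \ref{sec:pf}.
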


In applications (Sections \ref{sec:app1} and \ref{sec:app2}), we apply Theorem \ref{th1} to a map $\Phi^\e(u_a^\e + \cdot),$ with the notation of Remark \ref{r-app}, so that in practice Theorem \ref{th1} is not a result about small solutions: the smallness condition \eqref{est-th} bears on the perturbation variable $u,$ the full solution to $\Phi^\e = 0$ being $u_a^\e + u.$ The smallness condition \eqref{wkb3} is an accuracy condition bearing on the approximate solution $u_a^\e;$ we show in Remark \ref{rem:newt} that this condition is sharp in the usual implicit function theorem setting without losses in derivatives.

We supplement the above existence result by the following local uniqueness Theorem. Contrary to Theorem \ref{th1}, Theorem \ref{th2} does not rely on a Nash-Moser iterative scheme.

 \begin{theo}[Local uniqueness] \label{th2}
Under Assumptions {\rm \ref{ass1},} {\rm \ref{ass2}} and {\rm \ref{ass-wkb}}, 
for $\e$ small enough, 
if $(\Phi^\e)'$ is invertible, i.e., $\Psi^\e$ is also a left inverse, then
the solution described in Theorem {\rm \ref{th1}} is unique in a ball of radius 
$o(\eps^{\max(\k + \g_1,\gamma_0,\gamma)})$ in 
$s_0 +2m + r'$ norm.
More generally, if $\hat u^\e$ is a second solution within this ball, then
$(\hat u^\e- u^\e)$ is approximately tangent to $\kernel (\Phi^\e)'(u^\e)$, in the
sense that its distance in $s_0$ norm from $\kernel (\Phi^\e)'(u^\e)$ is
$o(|\hat u^\e-u^\e|_{s_0 })$.
In particular, if $\kernel (\Phi^\e)'(u^\e)$ is finite-dimensional,
then  $u$ is the unique solution in the ball satisfying the additional
``phase condition''
\be\label{phasecond}
\Pi_{u^\e} (\hat u^\e-u^\e)=0,
\ee
where 
$\Pi_{u^\e}$ is any uniformly bounded projection 
onto $\kernel (\Phi^\e)'(u^\e).$ (In a Hilbert space, any orthogonal projection onto $\kernel (\Phi^\e)'(u^\e)$.)

\end{theo}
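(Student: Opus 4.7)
The plan is to set $w := \hat u^\eps - u^\eps$, Taylor-expand $\Phi^\eps$ around $u^\eps$ to second order, and apply the right inverse $\Psi^\eps(u^\eps)$ to convert the resulting identity into an approximate Newton step whose residue is quadratic in $w$. Explicitly, from $\Phi^\eps(u^\eps) = \Phi^\eps(\hat u^\eps) = 0$,
\begin{equation*}
(\Phi^\eps)'(u^\eps) w = -R(w), \qquad R(w) := \int_0^1(1-t)(\Phi^\eps)''(u^\eps+tw)(w,w)\,dt.
\end{equation*}
Theorem \ref{th1} gives $|u^\eps|_{s_0+m+r} \lesssim \eps^{k-\k} \lesssim \eps^{\g}$ (using $k>\k+\g$ from \eqref{ass-k}), so $\Psi^\eps(u^\eps)$ is available from Assumption \ref{ass2}; applying it to the Newton identity yields
\begin{equation*}
w + \Psi^\eps(u^\eps) R(w) \in \kernel (\Phi^\eps)'(u^\eps).
\end{equation*}
The rest of the argument reduces to showing that the Newton correction $\Psi^\eps(u^\eps) R(w)$ is negligible in front of $w$.

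To estimate the correction I would first apply the tame second-derivative bound \eqref{1''} of Assumption \ref{ass1} at level $s_0+m+r'$,
\begin{equation*}
\|R(w)\|_{s_0+m+r'} \lesssim \eps^{-2\g_1}|w|_{s_0+m}^2\sup_{t\in[0,1]}|u^\eps+tw|_{s_0+2m+r'} + \eps^{-\g_1}|w|_{s_0+m}|w|_{s_0+2m+r'},
\end{equation*}
then chain it with the right-inverse bound \eqref{2} at $s=s_0+m$,
\begin{equation*}
|\Psi^\eps(u^\eps) R(w)|_{s_0+m} \lesssim \eps^{-\k}\|R(w)\|_{s_0+m+r'}.
\end{equation*}
The ball hypothesis $|w|_{s_0+2m+r'} = o(\eps^{\max(\k+\g_1,\g_0,\g)})$, together with Theorem \ref{th1} (interpolated if needed against the high-norm uniform bound on $u^\eps$ implicit in the Nash--Moser scheme in order to control $|u^\eps|_{s_0+2m+r'}$), ensures via the strict inequalities \eqref{ass-k} that every resulting $\eps$-factor carries a strictly positive power, so that
\begin{equation*}
|\Psi^\eps(u^\eps) R(w)|_{s_0+m} = o(|w|_{s_0+m}).
\end{equation*}
Passing to the $s_0$ norm via \eqref{sp1} and combining with the kernel inclusion above then yields the general distance-to-kernel statement.

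When $(\Phi^\eps)'(u^\eps)$ is invertible its kernel is trivial, and the Newton identity collapses to $w = -\Psi^\eps(u^\eps) R(w)$; the previous estimate then gives $|w|_{s_0+m} \leq \tfrac12|w|_{s_0+m}$ for $\eps$ small, forcing $w = 0$. For the phase-condition case I would set $\phi := w + \Psi^\eps(u^\eps) R(w) \in \kernel(\Phi^\eps)'(u^\eps)$ and apply $\Pi_{u^\eps}$: since $\Pi_{u^\eps}\phi = \phi$ and \eqref{phasecond} gives $\Pi_{u^\eps}w=0$, one has $\phi = \Pi_{u^\eps}\Psi^\eps(u^\eps) R(w)$, whence
\begin{equation*}
w = -(I - \Pi_{u^\eps})\Psi^\eps(u^\eps) R(w);
\end{equation*}
uniform boundedness of $\Pi_{u^\eps}$ combined with the previous estimate again forces $w=0$.

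The main technical obstacle is the exponent bookkeeping: verifying that, within a ball of radius $o(\eps^{\max(\k+\g_1,\g_0,\g)})$ in $s_0+2m+r'$ norm and using Theorem \ref{th1} to control $u^\eps$, every $\eps$-power that appears when the tame bound on $R(w)$ is chained with the right-inverse bound on $\Psi^\eps$ carries a strictly positive exponent, so that the Newton correction is genuinely of order $o(1)\,|w|_{s_0+m}$. This is exactly the role of the triple inequality $k > \max(\k+\g_0, 2\k+\g_1, \k+\g)$ from Assumption \ref{ass-wkb}, mirroring its role in driving the Nash--Moser convergence of Theorem \ref{th1}.
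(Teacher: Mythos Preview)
Your proof is correct and follows essentially the same route as the paper's: Taylor-expand to second order, apply $\Psi^\eps(u^\eps)$ to the resulting identity, and estimate the quadratic remainder by chaining \eqref{1''} with \eqref{2} so that the ball radius $o(\eps^{\max(\k+\g_1,\g_0,\g)})$ in $s_0+2m+r'$ norm makes the correction $o(|w|)$. Your handling of the phase condition via $w=-(I-\Pi_{u^\eps})\Psi^\eps(u^\eps)R(w)$ is an equivalent (and arguably cleaner) rephrasing of the paper's $\hat u-u+o(|\hat u-u|)\in\kernel(\Phi^\eps)'(u^\eps)$ followed by projection.
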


 In a non-Hilbertian context, the existence of such a projection $\Pi_{u^\e}$  is discussed in Remark \ref{rem:Pi} below.

\subsection{Remarks}

 We first remark that the proofs use only part of the information contained in \eqref{2} and \eqref{1''}:
\begin{rem} \label{rem0'} 
An examination of the proofs of Theorems {\rm \ref{th1}} and {\rm \ref{th2}}
reveals that for existence we require  
estimate \eqref{2} only for $f$ in the image 
of $\Phi^\eps$ or $(\Phi^\eps)''$,
since $\Psi^\eps$ is estimated only in composition
with one or the other of these operators, while for uniqueness we need only
the estimate for $\Psi^\eps(u)(\Phi^\eps)''(u)$ that would result by
composing estimates \eqref{2} and \eqref{1''}.
\end{rem}

\medskip

Next we remark that the approximation rate is sharp by comparison with the standard Newton scheme:
\begin{rem}  \label{rem:newt}
For $\g_0, \g \leq \k + \g_1$, corresponding to a critical value $k_c = 2\k + \g_1$ in \eqref{ass-k}, Theorem {\rm \ref{th1}} states that
a loss of 
$\e^{-\k}$ in the linear estimates means that, 
with the notation of Remark {\rm \ref{r-app},}
$\| \Phi^\e(u^\e_a)\|_s\lesssim \eps^{2 \k + \g_1 + \eta},$ any
$\eta>0,$ is the accuracy needed on the approximate solution. 

 This condition is sharp even for convergence of a standard
Newton iteration scheme 
 $$u^\eps_{n+1} :=u^\eps_n- \Psi^\eps(u^\eps_n) \Phi^\eps(u^\eps_n), \qquad u_0^\e := u_a^\e, \quad |u_a^\e|_{s+m} \lesssim \e^{\g_1},$$
  for problems with no loss of derivatives ($r = r' = 0$),
corresponding by the computation
$$ \begin{aligned}
 \| \Phi^\eps(u^\eps_{1}) \|_s & = \big\| \int_0^1 (1 -t) (\Phi^\e)''(u_0^\e + t (u_1^\e - u_0^\e)) \cdot (u_1^\e - u_0^\e, u_1^\e - u_0^\e) \big\|_s \\  & \lesssim \big( \e^{-2 \g_1} (\| u_0^\e\|_{s}  +  \| u_1^\e - u_0^\e\|_{s}) + \e^{-\g_1} \big)\| u_1^\e - u_0^\e \|_{s}^2 \\ & \lesssim \e^{-\g_1} \| u_1^\e - u_0^\e \|_{s}^2 \\ &
 \lesssim \e^{-\g_1} |\Psi^\eps(u^\eps_0) \Phi^\eps(u^\eps_0)|_s^2
\\ & \lesssim 
\eps^{-(2 \k + \g_1)}\|\Phi^\eps(u^\eps_0)\|_s^2 \\ & \lesssim \eps^{\eta} \|\Phi^\eps(u^\eps_0)\|_s 
\end{aligned}$$
in the case $m = 0$ to the condition that error $(\|\Phi^\eps(u^\e_n)\|_s)_{n \in \N}$ decreases at the first step.
\end{rem}

\medskip

 We make precise the parameter $\bar p$ that appears in Assumption \ref{ass-wkb}: 

\begin{rem} \label{rem3} 
 From \eqref{ens-cond}, we find that $\bar p$ is
\begin{equation} \label{barp} \bar p = m + \inf_{N > N_0} \frac{(N+1)(m + \max(r,r') + M)}{1 - \frac{\k}{k} - M},
\end{equation}
with 
$$N_0 :=  \frac{\k + k m'}{k - (2\k + \g_1)},   \, \, M := \max\left(\frac{\g_0}{k}, \frac{\g}{k}, \frac{1}{2} \left(1 +  \frac{\g_1}{k} + \frac{m'}{N} + \frac{\k}{kN}\right)\right), \, \, m' := \max(m+r',r).$$
 We observe the following asymptotic behavior as $k$ approaches from above the critical value $k_c := \max(\k + \g_0, 2 \k + \g_1, \k + \g)$ given in \eqref{ass-k}:
 \begin{itemize}
 \item If $k_c = 2 \k + \g_1,$ then $\bar p$ blows up like $(k-k_c)^{-2}$ as $k \downarrow k_c.$
 \item If $k_c = \k + \g_0,$ or $k_c = \k + \g,$ then $\bar p$ blows up like $(k-k_c)^{-1}$ as $k \downarrow k_c.$
 \end{itemize} 
\end{rem}

 The phase condition \eqref{phasecond} can in some situations be made explicit: 
 
\begin{rem} \label{r-uniqueness} Let $\Pi_u$ be a bounded projection onto $\ker (\Phi^\e)'(u),$ as in \eqref{phasecond}. If the map $(u,v) \to \Pi_{u} v$ is continuous in $E_{s_0} \times E_{s_0},$ uniformly in $\e,$ then the implicit phase condition \eqref{phasecond} can be replaced by the explicit 
 $$
\Pi_{0} (\hat u^\e-u^\e)=0.
$$
See {\rm \cite{TZ2},} Section 2, for related discussions of uniqueness up to phase conditions. 
\end{rem}

\medskip

 We discuss the existence of the projection mentioned in Theorem \ref{th2}:

\begin{rem} \label{rem:Pi} We first remark that if $\kernel (\Phi^\e)'(u^\e)$ is finite-dimensional, then a bounded projection exists by the Hahn--Banach Theorem; see, e.g., {\rm \cite{RY}.}

 In the infinite-dimensional case, we note that
a projection $\Pi$ onto a subspace $S$ of a Banach space
is bounded if and only if the distance from $s\in S$ to
$\kernel \,\Pi$ is greater than or equal to $|s|/C$ for some uniform $C>0$.
(Indeed, 
$|s|=|\Pi (s-t)|\le C|s-t|$ for all $s\in S$, $t\in \kernel \,\Pi$
is equivalent to the statement that $\Pi$ is bounded, 
since $s-t$ runs over the entire Banach space as $s$ and $t$
are varied.)

 This implies that if there is an isometry between
spaces $F^\eps_s$ and a common set of spaces $F^0_s$,
and if that $\kernel (\Phi^\eps)'(u^\eps)$, considered
(under mapping by this isometry) as a subset
of $F^0_s$ is finite-dimensional, with a limit as $\eps\to 0,$
then, there exist a family of projections $\Pi^\eps$ onto $\kernel
(\Phi^\eps)'(u^\eps)$ that are uniformly bounded with respect to $\eps$
in each $F^\eps_s$, for $\eps$ sufficiently small.

 Indeed, by the above note (second paragraph of the present Remark), there exists a bounded projection $\Pi^0$
onto the limit as $\eps\to 0$ of $\kernel (\Phi^\eps)'(u^\eps)$.
Denote by $\tilde F:=\kernel \Pi^0$ the associated complementary subspace.
Defining $\Pi^\eps$ to be the projection along $\tilde F$ onto
$\kernel (\Phi^\eps)'(u^\eps)$, we find by the Hahn-Banach theorem,
compactness of the intersection of the unit ball with
$\kernel (\Phi^\eps)'(u^\eps)$, and continuity,
that $\Pi^\eps$ is bounded for $\eps$ sufficiently small.
\end{rem}

\medskip

 We finally describe a somewhat artificial application for orientation:
 
\begin{ex} \label{ex:ql} Consider the quasilinear initial-value problem \eqref{ql-ivp}, with associated maps $\Phi^\e.$ The datum $\e^\s a$ is assumed to belong to $H^{s_1},$ with $s_1$ as in \eqref{low-s1}. 

 By Examples {\rm \ref{ex4}} and {\rm \ref{sec:app0}}, if the matrices $A_j$ are symmetric, then Assumptions {\rm \ref{ass1}} and {\rm \ref{ass2}} hold in $E_s,$ $F_s,$ for any $T >0,$ for $s$ satisfying \eqref{ass-s}. 
 
 By Remarks {\rm \ref{rem:ass1-shift}} and {\rm \ref{rem:ass2-shift}}, if the initial datum is small enough: $\s > d/2,$ then Assumptions {\rm \ref{ass1}} and {\rm \ref{ass2}} still hold, with the same parameters, for the translated maps $\tilde \Phi^\e := \Phi^\e(\e^\s a + \cdot).$
 
 By Example {\rm \ref{ex:rescaled}}, if $\s > \s_c := \max(d/2,3(1 + \ell)^{-1} (1 + d/2)),$ then the translated maps $\tilde \Phi^\e$ satisfy in addition Assumption {\rm \ref{ass-wkb}}.
 
 We can conclude that Theorem {\rm \ref{th1}} yields existence in $C^0([0,T], H^s(\R^d))$ of a solution to \eqref{ql-ivp}. If $\ell = 1,$ the smallness condition for the initial datum if $\s > 3/2 + 3d/4.$
 
  We thus partly recovered a classical small-amplitude existence result of the quasilinear hyperbolic theory. Note that, as mentioned in Remark {\rm \ref{rem:reg}}, if $\s - \s_c$ is small, then $\bar p$ is large, hence $s,$ satisfying \eqref{ass-s}, is much smaller than $s_1,$ meaning that the solution is much less regular than the datum. 
 
 \end{ex}
 
 For quasilinear symmetric systems, the Lax iteration scheme gives an existence result with no smallness assumption on the datum, and with the sharp regularity criterion $s > 1 + d/2.$ In this view, it is much better suited for the resolution of quasilinear symmetric systems than the Nash-Moser scheme described above.

 \begin{section}{Proofs of Theorems \ref{th1} and \ref{th2}}\label{sec:pf}
 
 We write $\Phi$ for $\Phi^\e,$ $\theta_j$ for $\theta_j^\e,$ etc. Let $\theta_0$ such that
  \begin{equation} \label{cond1-0}
   \theta_0^{-\a} \leq \e^{\max(\g_0, \g_1, \g)},
   \end{equation}
   for some $\a > 0$ to be chosen later. Introduce the family of inequalities ${\cal C}_1(j),$ for $j \in \N,$ 
     $$ {\cal C}_1(j;q,\a): \qquad | v_j|_{s + q} \lesssim \theta_j^{-\a}$$
 depending on $\a$ and some $q \geq m$ and $s$ to be chosen later. We assume that Assumptions \ref{ass1}, \ref{ass2}, and \ref{ass-wkb} hold, and start by proving three Lemmas.
\begin{lem} \label{l1} Assume that $s_0 < s \leq s_1 - q,$ and 
 \begin{itemize}
 \item the sequence $u_j$ is well defined,
 \item $\lim_{j \to +\infty} \|\Phi(u_j)\|_s = 0,$
 \item condition ${\cal C}_1(j;q,\a)$ holds for all $j,$
 \item the series $\theta_j^{-\a}$ is convergent, with
 \begin{equation} \label{cv-series}
  \sum_{j=0}^{+\infty} \theta_j^{-\a} \lesssim \theta_0^{-\a}.
  \end{equation}
  \end{itemize}
 Then $u_j$ converges, in $s+q$ norm, to a solution of \eqref{0} which satisfies
  \begin{equation} \label{e1} | u |_{s+q} \lesssim \theta_0^{-\a}.\end{equation}
\end{lem}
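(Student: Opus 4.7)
The proof should be a routine telescoping-sum/Cauchy-sequence argument assembled from the four hypotheses. The plan is to exploit the identity $u_{j+1}-u_j = S_{\theta_j} v_j$ (with $u_0 = 0$) together with the $S_\theta$ bound \eqref{4} applied at the critical index, to show that $(u_j)$ converges in $E_{s+q}$, and then invoke continuity of $\Phi^\e$ on this space to identify the limit as a solution of \eqref{0}.

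First I would write the telescoping identity
$$ u_N = \sum_{j=0}^{N-1} S_{\theta_j} v_j, \qquad u_{j+k} - u_j = \sum_{\ell = j}^{j+k-1} S_{\theta_\ell} v_\ell, $$
valid because $u_0 = 0$. Applying \eqref{4} with equal upper and lower indices yields $|S_{\theta_\ell} v_\ell|_{s+q} \lesssim |v_\ell|_{s+q}$, which by the assumed condition ${\cal C}_1(\ell;q,\a)$ is bounded by $\theta_\ell^{-\a}$. The summability hypothesis \eqref{cv-series} then gives that the tail $\sum_{\ell \geq j} \theta_\ell^{-\a}$ tends to $0$ as $j \to \infty$, so $(u_j)$ is Cauchy in $E_{s+q}$ and converges to some $u \in E_{s+q}$. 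Passing to the limit $N\to\infty$ in
$$ |u_N|_{s+q} \leq \sum_{j=0}^{N-1} |S_{\theta_j} v_j|_{s+q} \lesssim \sum_{j=0}^{N-1} \theta_j^{-\a} \lesssim \theta_0^{-\a} $$
yields the bound \eqref{e1}.

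It remains to identify $u$ as a solution of \eqref{0}. Since $q \geq m$, $s > s_0$ and $s + q \leq s_1$, we have $s_0 + m < s + q \leq s_1$, so by \eqref{0.1} the map $\Phi^\e : E_{s+q} \to F_{s+q-m}$ is $C^2$, in particular continuous. Therefore $\Phi^\e(u_j) \to \Phi^\e(u)$ in $F_{s+q-m}$, and using the embedding \eqref{sp0}--\eqref{sp1} (valid since $s + q - m \geq s$) also in $F_s$. Combined with the assumed convergence $\|\Phi^\e(u_j)\|_s \to 0$, this forces $\Phi^\e(u) = 0$.

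I do not foresee any genuine obstacle in this lemma itself: once the four hypotheses are granted, the argument is a bookkeeping exercise in telescoping sums, regularizer bounds, and continuity. The substantive work of the Nash--Moser scheme lies in the subsequent lemmas, which must establish the nontrivial inductive condition ${\cal C}_1(j;q,\a)$ and the decay $\|\Phi^\e(u_j)\|_s \to 0$ uniformly in $\e$ by a careful choice of the sequence $(\theta_j^\e)$ satisfying \eqref{def-theta}.
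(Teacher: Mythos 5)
Your proof is correct, and its first half --- convergence of $(u_j)$ in $E_{s+q}$ via the telescoping sum, the regularizer bound \eqref{4}, the hypothesis ${\cal C}_1(j)$, and the summability \eqref{cv-series}, yielding \eqref{e1} --- matches the paper's. The identification of the limit as a solution, however, takes a genuinely different and more elementary route. The paper Taylor-expands $\Phi(u)-\Phi(u_j)$ and applies the tame bound \eqref{1'} of Assumption \ref{ass1}, after first deducing from \eqref{cond1-0}, \eqref{cv-series} and \eqref{e1}--\eqref{e1.1} the smallness $|u_j|_{s+m}+|u|_{s+m}\lesssim \eps^{\max(\g_0,\g_1,\g)}$ needed to control ${\bf C}_1$ and the singular factor $\eps^{-\g_1}$ in \eqref{1'}; this yields the $\eps$-uniform quantitative estimate $\|\Phi(u)-\Phi(u_j)\|_s\lesssim |u-u_j|_{s+m}$. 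You instead invoke only the qualitative continuity of $\Phi^\eps\colon E_{s+q}\to F_{s+q-m}$ guaranteed by \eqref{0.1} (the index $s+q$ is admissible because $s>s_0$, $q\ge m$ and $s+q\le s_1$), together with the norm-one embedding $F_{s+q-m}\hookrightarrow F_s$ from \eqref{sp1}. This bypasses \eqref{cond1-0} entirely within this lemma and is fully sufficient, since only a per-$\eps$ statement is claimed; what the paper's tame-estimate version buys is $\eps$-uniform control of the rate of $\Phi(u_j)\to\Phi(u)$, not needed here but consistent with the quantitative bookkeeping of the rest of the Nash--Moser scheme.
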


\begin{proof}
 If ${\cal C}_1(j)$ holds for all $j,$ then the sequence $u_j$ converges, in $s+q$ norm, to $u \in E_{s+q},$ and we have the estimate 
 \begin{equation} \label{e1.1} | u_j |_{s+q} \lesssim  \sum_{j=0}^{j-1} \theta_j^{-\a},
 \end{equation}
which implies \eqref{e1}. There holds
 \begin{equation} \label{lastone} \|\Phi(u)\|_s \leq \|\Phi(u_j)\|_s + \big\| \int_0^1 \Phi'(u_j + t(u - u_j)) \cdot (u - u_j) \, dt \, \big\|_s,
 \end{equation}
 and the first term in the upper bound tends to $0$ as $j \to \infty.$ We note that, by \eqref{cv-series}, \eqref{e1}, \eqref{e1.1}, and \eqref{cond1-0}, there holds $|u_j|_{s+m} + |u |_{s + m} \lesssim \e^{\g_0}.$ Hence, by the tame direct bound \eqref{1'} in Assumption \ref{ass1},  
 $$ \begin{aligned} \big\| \int_0^1 \Phi'(u_j + t(u - u_j)) \cdot (u - u_j) \, dt \, \big\|_s  \lesssim |u - u_j|_{s+m},\end{aligned}$$
 The upper bound tends to 0 as $j \to +\infty.$ With \eqref{lastone}, this implies that $u$ solves \eqref{0}.
\end{proof}
   
Let $p$ such that
 \begin{equation} \label{con-p}
   q + \max(r, r' + m) \leq p, \qquad s_0 + m + \max(r,r') + p  \leq s_1.
   \end{equation}
 Introduce the family of inequalities ${\cal C}_2(j),$ for $j \in \N$ and $N \geq 0:$ 
$$ {\cal C}_2(j;q,\a,p,N): \qquad \left\{ \begin{aligned} |u_j|_{s + q} & \lesssim & \theta_0^{-\a}, \\ \| \Phi(u_j)\|_{s} & \lesssim  &\theta_j^{-1}, \\ |u_j|_{s + p} & \lesssim &  \theta_j^N.\end{aligned}\right.$$

\begin{lem} \label{l2} Assume that $s_0 + m + \max(r,r') \leq s \leq s_1 - p,$ and 
\begin{itemize}
 \item for all $j' \leq j,$ $u_{j'}$ is well defined and condition ${\cal C}_1(j')$ holds,
 \item there holds 
  \begin{equation} \label{con1} \sum_{j'=0}^j \theta_j^{-\a} \lesssim \theta_0^{-\a},
  \end{equation}
  \item condition ${\cal C}_2(j)$ holds, with parameters satisfying  \begin{equation} \label{con2} \theta_j^{m - q - \a} + \e^{-\g_0} \theta_j^{-2\a} \leq \theta_{j+1}^{-1},\end{equation}
  \begin{equation} \label{con3} 
   \e^{-\k} \theta_j^{\max(m+r',r)} \theta_j^N \leq \theta_{j+1}^N.
   \end{equation}
   \end{itemize}
  Then $v_{j+1}$ is well defined in $E_{s+q}$ and ${\cal C}_2(j+1)$ holds.
 \end{lem}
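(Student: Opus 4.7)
The claim is the inductive engine of the Nash--Moser scheme, so I would verify in turn the three components of $\mathcal{C}_2(j+1)$ together with the well-definedness of $v_{j+1}$. For the low-norm bound $|u_{j+1}|_{s+q} \lesssim \theta_0^{-\alpha}$ I would write $u_{j+1} = \sum_{j'=0}^j S_{\theta_{j'}} v_{j'}$, use \eqref{4} at $s' = s$ to bound each $S_{\theta_{j'}}$ on $E_{s+q}$, plug in $\mathcal{C}_1(j')$, and sum using \eqref{con1} to obtain the asserted estimate. Since $s + q \geq s_0 + m + r$, this yields $|u_{j+1}|_{s_0 + m + r} \lesssim \theta_0^{-\alpha} \lesssim \varepsilon^{\gamma}$ by \eqref{cond1-0}; hence the solvability condition \eqref{cond1} of Assumption \ref{ass2} is met, $\Psi^\varepsilon(u_{j+1})$ exists, and $v_{j+1}$ is well defined in $E_{s+q}$ via \eqref{2}.

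For the control $\|\Phi(u_{j+1})\|_s \lesssim \theta_{j+1}^{-1}$ I would Taylor-expand to second order and exploit the Newton cancellation $\Phi'(u_j) v_j + \Phi(u_j) = 0$:
\[
\Phi(u_{j+1}) \; = \; \Phi'(u_j)(S_{\theta_j}-\mathrm{Id}) v_j \; + \; \int_0^1 (1-t)\, \Phi''\bigl(u_j + t S_{\theta_j} v_j\bigr)(S_{\theta_j} v_j,\, S_{\theta_j} v_j)\,dt.
\]
The linear term I bound using \eqref{1'}, the regularization error \eqref{3}, and the smallness $|u_j|_{s+m} \lesssim \theta_0^{-\alpha} \lesssim \varepsilon^{\gamma_1}$ coming from \eqref{cond1-0}, obtaining a contribution $\lesssim \theta_j^{m-q-\alpha}$. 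The quadratic term I bound using \eqref{1''}, \eqref{4}, and $\mathcal{C}_1(j)$: the factors $|S_{\theta_j} v_j|_{s_0+m}$ and $|S_{\theta_j} v_j|_{s+m}$ are each $\lesssim |v_j|_{s+q} \lesssim \theta_j^{-\alpha}$, while absorbing $|u_j + t S_{\theta_j} v_j|_{s+m} \lesssim \theta_0^{-\alpha}$ against one power of $\varepsilon^{-\gamma_1}$ via \eqref{cond1-0} reduces the $\varepsilon$-loss to the $\varepsilon^{-\gamma_0}$ appearing in \eqref{con2}. Summing the two contributions and invoking \eqref{con2} closes the estimate.

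For $|u_{j+1}|_{s+p} \lesssim \theta_{j+1}^N$ I would split $|u_{j+1}|_{s+p} \leq |u_j|_{s+p} + |S_{\theta_j} v_j|_{s+p}$, control the first term by $\mathcal{C}_2(j)$, and apply the inverse bound \eqref{2} to $v_j$ at the high index $s+p-M$ with $M := \max(m+r',r)$. The choice of $M$ ensures $s+p-M+r \leq s+p$ and $s+p-M+r' \leq s+p-m$, so the right-hand side of \eqref{2} is controlled by $|u_j|_{s+p} \lesssim \theta_j^N$ and by $\|\Phi(u_j)\|_{s+p-m} \lesssim 1 + |u_j|_{s+p} \lesssim \theta_j^N$ (the latter via the direct bound \eqref{1}), together with $\|\Phi(u_j)\|_{s_0+m+r'} \leq \|\Phi(u_j)\|_s \lesssim \theta_j^{-1}$ from $\mathcal{C}_2(j)$. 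This yields $|v_j|_{s+p-M} \lesssim \varepsilon^{-\kappa} \theta_j^N$; the regularizing bound \eqref{4} at exponent $M$ then gives $|S_{\theta_j} v_j|_{s+p} \lesssim \varepsilon^{-\kappa} \theta_j^{M+N}$, and condition \eqref{con3} completes $\mathcal{C}_2(j+1)$.

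The delicate point will be the Newton-cancellation step: the bound \eqref{1''} carries a factor $\varepsilon^{-2\gamma_1}$ that is prohibitive on its own, and only its combination with the inductive smallness of $u_j$ afforded by \eqref{cond1-0} brings the $\varepsilon$-loss down to what \eqref{con2} can absorb. Reconciling this slow-growth constraint on $\theta_{j+1}$ with the opposing fast-growth requirement \eqref{con3} of the high-norm step is the tension that, in the main theorem, is quantified by the value of $\bar p$ in Remark \ref{rem3}.
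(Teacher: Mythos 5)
Your proof is correct and matches the paper's own argument step by step: the telescoping sum plus \eqref{con1} for $\mathcal{C}_2(j+1)$(i), the Newton-cancellation split of $\Phi(u_{j+1})$ into the regularization error $\Phi'(u_j)(S_{\theta_j}-\mathrm{Id})v_j$ plus the quadratic Taylor remainder for (ii), and \eqref{2} composed with \eqref{4} and \eqref{1} at index $s+p-\max(m+r',r)$ for (iii). One bookkeeping remark: the quadratic error is genuinely $\varepsilon^{-\gamma_1}\theta_j^{-2\alpha}$ after absorbing a single power of $\varepsilon^{\gamma_1}$ via \eqref{cond1-0}, consistent with \eqref{z0} used later; the $\varepsilon^{-\gamma_0}$ in \eqref{con2} that you tried to reconcile your calculation with is a typographical slip in the statement, not a further reduction you need to produce.
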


\begin{proof} If conditions ${\cal C}_1(j')$ hold for all $j' \leq j$ and if \eqref{con1} holds, then
 \begin{equation} \label{jplusone}
  |u_{j+1} |_{s+q} \lesssim \theta_0^{-\a}.
  \end{equation}
Bound \eqref{jplusone} is ${\cal C}_2(j+1)$(i). Besides, \eqref{jplusone} and \eqref{cond1-0} imply that $u_{j+1}$ also satisfies \eqref{cond1}, so that, by ${\cal C}_2(j+1)$(iii), the first bound in \eqref{con-p} and the tame inverse bound \eqref{2} in Assumption \ref{ass2}, $v_{j+1}$ is defined in $E_{s+q}.$ 

 To prove ${\cal C}_2(j+1)$(ii), we use the fact that \eqref{iteration} is almost a Newton's scheme:
 $$ \| \Phi(u_{j+1})\|_{s} \leq E_1 + E_2,$$
 where $E_1$ is the error due to the regularization:
 $$ E_1 = \| \Phi'(u_j) \cdot (S_{\theta_j} v_j - v_j) \|_s,$$
 and $E_2$ is the error due to the scheme: 
 $$ E_2 = \left\| \int_0^1 (1 - t) \Phi''(u_j + t S_{\theta_j} v_j) \cdot (S_{\theta_j} v_j, S_{\theta_j} v_j) \, dt \right\|_s.$$
Conditions $({\cal C}_1(j'))_{1\leq j'\leq j-1},$ together with \eqref{cond1-0} and \eqref{con1}, imply $ |u_j|_{s+m} \lesssim \e^{\g_0}.$ Together with the tame direct bound \eqref{1'} in Assumption \ref{ass1}, this gives 
 $$ E_1  \lesssim  |S_{\theta_j} v_j - v_j|_{s +m },$$
and with \eqref{3} and ${\cal C}_1(j),$ 
  \begin{equation} \label{bd-E1} E_1  \lesssim  \theta_j^{m - q - \a}.
 \end{equation}
By ${\cal C}_1(j),$ \eqref{4}, and \eqref{cond1-0}, there holds $|S_{\theta_j} v_j|_{s+m} \lesssim \e^{\g_0}.$ With the tame direct bound \eqref{1''} in Assumption \ref{ass1}, this gives
$$ E_2  \lesssim \e^{-2 \g_1} |S_{\theta_j} v_j|_{s_0 + m}^2 \left( |u_j|_{s+m} + |S_{\theta_j} v_j|_{s+m}\right) + \e^{-\g_1} |S_{\theta_j} v_j|_{s+m} |S_{\theta_j} v_j|_{s_0+m},$$
and, bounding $s_0 + m$ norms by $s+m$ norms, and using $|u_j|_{s+m} + |v_j|_{s+m} \lesssim \theta_j^{-\a},$ a consequence of $({\cal C}_1(j')_{j'\leq j},$ and \eqref{con1}, we obtain
\begin{equation} \label{bd-E2}
 E_2 \lesssim \e^{-\g_1} \theta_j^{-2\a}.
 \end{equation}
  Bounds \eqref{bd-E1}, \eqref{bd-E2} and \eqref{con2} imply ${\cal C}_2(j+1)$(ii). Finally, to prove ${\cal C}_2(j+1)$(iii), we remark that, by \eqref{4},  
  \begin{eqnarray} |u_{j+1}|_{s + p} & \leq &  |u_{j}|_{s + p} +  |S_{\theta_j} v_{j}|_{s + p}\nonumber \\
 & \lesssim &   |u_{j}|_{s + p} +  \theta_j^{\max(m+r',r)} |  v_{j}|_{s + p - \max(m+r',r)}.\label{a9} \end{eqnarray}
 Under \eqref{con-p}, the tame direct bound \eqref{1} in Assumption \ref{ass1} and the tame inverse bound \eqref{2} in Assumption \ref{ass2} imply
  \begin{eqnarray} |  v_{j}|_{s + p - \max(m+r',r)} & \lesssim & \e^{-\k}(  |u_{j}|_{s + p} \| \Phi(u_j)\|_{s_0 + m + r'} + \| \Phi(u_j)\|_{s + p - \max(m+r',r) + r'} ) \nonumber\\
 & \lesssim &  \e^{-\k} (1 + |u_{j}|_{s + p}) (1 + \| \Phi(u_j)\|_{s_0 + m + r'}) \nonumber \\
 & \lesssim & \e^{-\k} \theta_j^N. \label{a10} 
 \end{eqnarray}
 Bounds \eqref{a9}, \eqref{a10} 
 and \eqref{con3} imply ${\cal C}_2(j+1)$(iii). 
 \end{proof}

\begin{lem} \label{l3} Let $j$ such that 
\begin{equation} \label{con4}
 \e^{-\k} \theta_{j}^{-\b} \leq \theta_{j}^{-\a}
 \end{equation}
 where $$\b := (p'+\max(r,r'))^{-1} ((p'-q)- N (q + \max(r,r'))), \qquad p' := p - \max(m+r',r).$$
 Then condition ${\cal C}_2(j)$
 implies ${\cal C}_1(j).$
\end{lem}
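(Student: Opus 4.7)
The plan is to control $v_j = -\Psi(u_j)\Phi(u_j)$ in the intermediate norm $|\cdot|_{s+q}$ by combining two applications of the tame inverse bound \eqref{2} with the interpolation inequality \eqref{sp2}: a ``low-index'' application that exploits the smallness $\|\Phi(u_j)\|_s \lesssim \theta_j^{-1}$ from ${\cal C}_2(j)$(ii), and a ``high-index'' application that absorbs the crude growth $|u_j|_{s+p} \lesssim \theta_j^N$ from ${\cal C}_2(j)$(iii). Throughout, the background smallness $|u_j|_{s_0+m+r} \lesssim \e^\g$ required by \eqref{2} comes for free from ${\cal C}_2(j)$(i), \eqref{con-p} (which yields $s_0+m+r \leq s+q$), and \eqref{cond1-0}.

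For the low-index bound, I would apply \eqref{2} at the shifted index $s-\max(r,r')$. This precise choice is the crux of the argument: it makes the two Sobolev indices $s-\max(r,r')+r$ and $s-\max(r,r')+r'$ appearing on the right-hand side of \eqref{2} both $\leq s$, so that $|u_j|_{s-\max(r,r')+r} \leq |u_j|_s \lesssim 1$ and $\|\Phi(u_j)\|_{s-\max(r,r')+r'} \leq \|\Phi(u_j)\|_s \lesssim \theta_j^{-1}$ are controlled directly by ${\cal C}_2(j)$(i)--(ii); since $s_0+m+r' \leq s$, the same applies to $\|\Phi(u_j)\|_{s_0+m+r'}$. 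Feeding these bounds into \eqref{2} produces
\[ |v_j|_{s-\max(r,r')} \lesssim \e^{-\k}\theta_j^{-1}. \]
For the high-index bound I apply \eqref{2} at $s+p'$ with $p':=p-\max(m+r',r)$: this is exactly the calculation already carried out in the proof of Lemma \ref{l2}, which, combining \eqref{1} applied at index $s+p'+r'$ (valid since $s+p'+r'+m \leq s+p$) with ${\cal C}_2(j)$(ii)--(iii), yields
\[ |v_j|_{s+p'} \lesssim \e^{-\k}\theta_j^N. \]

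Now I would interpolate these via \eqref{sp2} at the intermediate level $s+q$ (which lies between $s-\max(r,r')$ and $s+p'$ by \eqref{con-p}), obtaining
\[ |v_j|_{s+q} \lesssim \bigl(\e^{-\k}\theta_j^{-1}\bigr)^{\frac{p'-q}{p'+\max(r,r')}}\bigl(\e^{-\k}\theta_j^N\bigr)^{\frac{q+\max(r,r')}{p'+\max(r,r')}} \lesssim \e^{-\k}\theta_j^{-\b}, \]
with $\b$ exactly as stated. The standing hypothesis \eqref{con4} then closes the estimate as $|v_j|_{s+q} \lesssim \theta_j^{-\a}$, which is ${\cal C}_1(j)$.

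The main obstacle is identifying \emph{where} to invoke \eqref{2} for the low-index bound: applying it at $s$ itself would require interpolating $\|\Phi(u_j)\|_{s+r'}$ separately between $\|\Phi(u_j)\|_s$ and $\|\Phi(u_j)\|_{s+p-m}$ (using \eqref{1}), producing a denominator $p-m$ in the final exponent rather than the announced $p'+\max(r,r')$. Shifting the low index down by exactly $\max(r,r')$ allows the $r$-loss on the background argument \emph{and} the $r'$-loss on the source argument of \eqref{2} to be absorbed simultaneously by the same $|\cdot|_s$ and $\|\cdot\|_s$ bounds, which is precisely what produces the combined $\max(r,r')$ in the denominator of $\b$.
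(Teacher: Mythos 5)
Your proposal is correct and follows exactly the paper's route: the low-index bound is obtained by applying \eqref{2} at $s-\max(r,r')$ (so that both shifted indices $s-\max(r,r')+r$ and $s-\max(r,r')+r'$ fall at or below $s$, yielding $|v_j|_{s-\max(r,r')} \lesssim \e^{-\k}\theta_j^{-1}$ as in \eqref{v}), the high-index bound is the calculation \eqref{a10} from Lemma \ref{l2} at $s+p'$ giving $|v_j|_{s+p'} \lesssim \e^{-\k}\theta_j^N$ as in \eqref{new}, and interpolation via \eqref{sp2} at $s+q$ together with \eqref{con4} closes the argument. (Your $\e^{-\k}$ prefactor on the interpolated bound is the correct one; the paper's displayed $\e^{-1}$ is a typographical slip.)
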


\begin{proof} Bound ${\cal C}_2(j)$(i), together with \eqref{cond1-0}, implies that $u_{j}$ satisfies \eqref{cond1}. Then, bound ${\cal C}_2(j)$(iii) implies that $v_{j}$ is well defined in $E_{s+p-\max(m+r',r)},$ and we can check, exactly as in the proof of \eqref{a10} in Lemma \ref{l2}, that the bound 
 \begin{equation} \label{new}
  | v_{j}|_{s + p - \max(m+r',r)} \lesssim \e^{-\k} \theta_{j}^N
  \end{equation}
 holds. Besides, by the tame inverse bound \eqref{2} in Assumption \ref{ass2},
 \begin{eqnarray} | v_{j}|_{s - \max(r,r')} & \lesssim & \e^{-\k}( | u_{j}|_{s} \| \Phi(u_{j})\|_{s_0 + m} + \| \Phi(u_{j})\|_{s}) \nonumber  \\ 
 & \lesssim & \e^{-\k} (1 + \theta_0^{-\a}) \| \Phi(u_{j})\|_{s} \nonumber \\ & \lesssim & \e^{-\k} \theta_{j}^{-1}. \label{v} \end{eqnarray}
Finally, bounds \eqref{new}, \eqref{v} and the interpolation property \eqref{sp2} imply
\begin{eqnarray} | v_{j} |_{s + q}  & \lesssim  & \nonumber |v_{j}|_{s-r''}^{\frac{p'-q}{p' +r''}} |v_{j}|_{s + p'}^{\frac{q+r''}{p'+r''}} \\ & \lesssim & \e^{-1} \theta_{j}^{-\b},\end{eqnarray}
 where $r'' = \max(r,r'),$ and the Lemma follows, with \eqref{con4}. 
   
 \end{proof}

\begin{proof}[End of proof of Theorem {\rm \ref{th1}}, existence] Let $q = m + \a.$ Define 
 \begin{equation} \label{def-theta} \theta_0 := \e^{-k}, \qquad \theta_{j+1} := \theta_j^\zeta, \quad j \geq 0,\end{equation}
 for some $\zeta > 1$ to be chosen below. Then \eqref{cond1-0} is satisfied if
  \begin{equation} \label{l-k}
    \max\left(\frac{\gamma_0}{\a},\frac{\gamma}{\a}\right) \leq k,
   \end{equation}
  and \eqref{con1} is satisfied. 

 By \eqref{def-theta} and Assumption \ref{ass-wkb}, condition ${\cal C}_2(0)$ is satisfied. Condition \eqref{con4} is satisfied as soon as
 \begin{equation} \label{beta'}
 \frac{\k}{\b- \a} \leq k.
 \end{equation}
 By Lemma \ref{l3}, \eqref{beta'} also implies that condition ${\cal C}_1(0)$ is satisfied. With definition \eqref{def-theta}, conditions \eqref{con2} and \eqref{con3} translate respectively into 
 \begin{equation} \label{z0}
  \frac{\g_1}{k} < 2 \a - \zeta,
 \quad \mbox{ and } \quad  \frac{\k}{N \zeta - N - \max(m+r',r)} \leq k.
  \end{equation}

 Suppose now that for all $0 \leq j' \leq j,$ $u_{j'}$ is well defined and ${\cal C}_1(j')$ and ${\cal C}_2(j')$ hold. Then by Lemma \ref{l2}, condition ${\cal C}_1(j+1)$ is satisfied if \eqref{z0} holds, and by Lemma \ref{l3}, condition ${\cal C}_2(j+1)$ is satisfied if \eqref{beta'} holds.
 
 We just proved that, under \eqref{l-k}, \eqref{beta'} and \eqref{z0}, conditions ${\cal C}_1(j)$ and ${\cal C}_2(j)$ hold for all $j.$ 
 
 Conditions \eqref{l-k}, \eqref{beta'} and \eqref{z0} are equivalent to 
 \begin{equation} \label{ens-cond}
   M \leq \frac{1}{2} \left(\zeta + \frac{\g_1}{k}\right) < \a \leq \left(1 + \frac{1}{p_0}\right)^{-1} \left(1 - \frac{\k}{k} - \frac{1}{p_0}( m + r'')\right).
  \end{equation}
 with 
 $$ M := \max\left(\frac{\g_0}{k}, \frac{\g}{k}, \frac{1}{2} \left(1 + \frac{\g_1}{k} + \frac{m'}{N} + \frac{\k}{k N} \right)\right), \quad p_0:= \frac{p - m' + \max(r,r')}{N + 1},$$
 and  $m' := \max(m+r',r).$ Under \eqref{ass-k}, if $N$ and $p$ are large enough, namely 
 \begin{equation} \label{N0}
  \frac{\k + k m'}{k - (2\k + \g_1)} =: N_0 < N, \qquad \bar p < p,
 \end{equation}
 where $\bar p$ is specified in Remark \ref{rem3}, then we can find $\a$ and $\zeta$ satisfying \eqref{ens-cond}.

 Let now $\a,$ $\zeta,$ $N,$ and $p$ such that \eqref{ens-cond} holds.
 By \eqref{def-theta} and $\zeta > 1,$ the series $\theta_j^{-\a}$ is convergent and satisfies \eqref{cv-series}. Besides, conditions ${\cal C}_2(j)$ imply $\| \Phi(u_j)\|_s \to 0.$ We can thus apply Lemma \ref{l1}: the sequence $u_j$ converges to a solution $u$ of \eqref{0} in $s+q$ norm, satisfying \eqref{e1}. Besides, as \eqref{v} holds for all $j,$
$$ |u_j |_s  \lesssim \e^{-\k} \sum_{j'=0}^j \theta_{j'}^{-\b} \lesssim \e^{-\k} \theta_0^{-1},
$$
 hence \eqref{est-th}.

\end{proof}

\begin{proof}[Proof of Theorem {\rm \ref{th2}}, local uniqueness.]
Suppressing $\eps$, let $\hat u$ be a second solution in $E_{s_0 + 2 m + r'}$ of $\Phi(u)=0$, lying
within $o(\eps^{\max(\k + \g_1,\gamma_0,\gamma)})$ of $u$ (and
thus of $0$).
Then, Taylor expanding, and using Assumption \ref{ass1}, we have
$$
0=\Phi(\hat u)-\Phi(u)=
\Phi'(u)(\hat u-u) + B(u, \hat u),
$$
 where 
 $$ B(u, \hat u) := \int_0^1 (1 - t)\Phi''(t u + (1 - t ) \hat u) \cdot (\hat u-u,\hat u-u) \, dt.$$
Applying $\Psi(u)$ and using Assumption \ref{ass2}, we thus have
$$
(\hat u-u) +
\Psi(u) B(u, \hat u) \in 
\kernel \Phi'(u),
$$
where
$$ \begin{aligned}
|\Psi(u) B(u, \hat u)|_{s_0} & \lesssim
\eps^{-\k - \g_1} \big(1 + \e^{-\g_1} (|\hat u|_{s_0 + m} + |u|_{s_0+m})\big)|\hat u - u|_{s_0 + 2 m + r'}^2  \\ & 
 = o(|\hat u-u|_{s_0 + 2 m + r'}).\end{aligned}
$$
This verifies tangency.
 Finally, from $\hat u-u + o(|\hat u-u|)\in \kernel \,(\Phi^\e)'(u^\e)$, we have 
 $$
\hat u-u + o(|\hat u-u|)=
\Pi_{u^\e} (\hat u^\e-u^\e)+
o(| \Pi_{u^\e}| |\hat u-u|),
$$
which, with \eqref{phasecond} and the assumed uniform boundedness of
$| \Pi_{u^\e}|$, gives
 $$
\hat u-u =
 o(|\hat u-u|)+ o(| \Pi_{u^\e}| |\hat u-u|)=
 o(|\hat u-u|),
$$
and thus $\hat u-u =0$.
\end{proof}

\end{section}

\section{Application: systems of quasilinear Schr\"odinger equations} \label{sec:app1}

 Consider systems of quasilinear Schr\"odinger equations in $v = (v_1,\dots,v_n) \in \C^n,$ 
 \begin{equation} \label{qlS}
  \d_t v_j + i \l_j \Delta_x v_j = \sum_{1 \leq j' \leq n} b_{jj'}(v,\d_x) v_{j'} + c_{jj'}(v,\d_x) \bar v_{j'}, \qquad 1 \leq j \leq n, \quad t \geq 0, x \in \R^d, \end{equation}
 with $d \geq 2.$ The $\l_j$ are assumed to be real and pairwise distinct, and the coefficients $b_{jj'}$ and $c_{jj'}$ are first-order differential operators:
  $ (b_{jj'},c_{jj'})(v,\d_x) = \sum_{1 \leq k \leq d} (b_{kjj'}(v),c_{kjj'}(v)) \d_{x_k},$
  where the maps $v \in \C \to (b_{kjj'}, c_{kjj'})(v) \in \C^2$ are smooth and satisfy, for some $\ell \in \N$ with $\ell \geq 2,$ and some $C > 0,$ for all $0 \leq |\a| \leq 2,$ for all $v,$
  \begin{equation} \label{homog}
  |\d_v^\a b_{kjj'}(v)| + | \d_v^\a c_{kjj'}(v)| \leq C |v|^{\ell - |\a|}.
  \end{equation}

We make the following assumption:
\begin{ass} \label{ass:S2} For all $j,j'$ such that $\l_j + \l_{j'} = 0,$ there holds $c_{jj'} = c_{j'j}.$ For all $j,$ there holds $\Im m \, b_{jj} \equiv 0.$
\end{ass}

  Assumption \ref{ass:S2} is a ``transparency" condition, similar to Assumptions 2.1, 2.5 and 2.10 in {\rm\cite{JMR}} and Assumption 2.15 in {\rm \cite{T1}}. It means that the singular source terms in \eqref{qlS2} possess some favorable structure (cancellation or symmetry) at the resonances.

Consider a family of initial data
 \begin{equation} \label{init-qlS}
  v^\e(0,x) = \e^{\s} a_\e(x), \qquad \mbox{with \quad $\sup_{\e \in (0,1)} \| a_\e\|_{H^{s_1}_\e} < \infty$},
 \end{equation}
 where $\s > 0$ and $a_\e$ is for instance concentrating: $a_\e(x) = a^0\left(\frac{x}{\e}\right),$ or oscillating: $a_\e(x) = a^0(x) e^{i x \cdot \xi_0/\e},$ for some $\xi_0 \in \R^d;$ in both cases $a^0 \in H^{s_1},$ for some large $s_1.$

   Our goal is to show that, under Assumption \ref{ass:S2}, for $s_1$ and $\s$ large enough, any $T >0$ and $\e$ small enough, we can apply Theorem \ref{th1} to prove existence over $[0,T],$ in weighted Sobolev spaces, for the initial-value problem \eqref{qlS}-\eqref{init-qlS}.

\begin{ex} \label{ex:S} Our assumptions are satisfied in particular by systems 
 $$ \left\{ \begin{aligned} \d_t v_1 + i \Delta v_1 & = b_{12}(v,\d_x) v_2 +  c_{11}(v,\d_x) \bar v_1 + c(v) \d_x \bar v_2, \\ \d_t v_2 - i \Delta v_2 & = b_{22}(v,\d_x) v_2 + c(v) \d_x \bar v_1 + c_{22}(v,\d_x) \bar v_2,\end{aligned}\right.$$
if $b_{22}$ is real, $b_{12},$ $b_{22},$ $c_{11},$ and $c_{22}$ are first-order differential operators, and all coefficients are $\ell$-homogeneous in $v,$ for some integer $\ell \geq 2.$ 
\end{ex}

\begin{rem} \label{rem:GM} Rauch and M\'etivier give in {\rm \cite{MR}} (Theorem 1.5; see also {\rm \cite{Me}}, Theorem 8.1.2) a local existence and uniqueness result for the Cauchy problem for \eqref{qlS}, under Assumption {\rm \ref{ass:S2}}, for data in $H^s,$ with $s > 1 +d/2.$ There is no small parameter in their setting.  We compare Rauch and M\'etivier's result with ours in Remark {\rm \ref{rem:comparison}.}
\end{rem}

 \subsection{Semi-classical setting} \label{sec:semicl}
 
 Introducing $u = (v,\bar v)^T \in \C^{2n},$ we obtain a system
 \begin{equation} \label{sys}
  \d_t u + i A(\d_x) u = B(u,\d_x) u,
 \end{equation}
 where $A$ is the diagonal, second-order, constant-coefficient operator
 \begin{equation} \label{A}
  A(\d_x) = \mbox{diag}\left(\l_1, \dots, \l_n,-\l_1,\dots,-\l_n\right) \Delta_x,
  \end{equation}
  and $B$ is the first-order operator
  $$ %
  B  = \left(\begin{array}{cc} {\cal B} & {\cal C} \\ \bar{\cal C} & \bar{\cal B} \end{array}\right), \qquad {\cal B} := \left( b_{jj'} \right)_{1 \leq j,j' \leq n}, \quad {\cal C} := \left( c_{jj'} \right)_{1 \leq j,j'\leq n}.
  $$ %
  Let
 $$ %
 J := \big\{ (j,j'), \quad \l_j + \l_{j'} = 0 \big\},
 $$ %
 and $\chi \in C^\infty_c(\R^d,\R)$ be a frequency truncation, such that $0 \leq \chi \leq 1,$ $\chi \equiv 1$ for $|\xi| \leq 1/2$ and $\chi \equiv 0$ for $|\xi| \geq 1.$
The source $B$ in \eqref{sys} decomposes into the sum of a resonant, a non-resonant term, and a low-frequency term:
$ B = B_r + B_{nr} + B_{lf}, $
where 
\begin{itemize} \item the resonant term is 
 $$ B_r := \mbox{diag}\big(b_{11}, \dots, b_{nn}, \bar b_{11}, \dots \bar b_{nn}\big) + \left(\begin{array}{cc} 0 & {\cal C}_{J} \\ \bar {\cal C}_{J} & 0 \end{array}\right),$$
 with the notation 
 $({\cal C}_{J})_{jj'} := c_{jj'}$ if $(j,j') \in J,$ and $({\cal C}_{J})_{jj'} := 0$ otherwise.
 The key is that, under Assumption \ref{ass:S2}, for all $v,\xi,$ the matrix $B_r(v,\xi)$ is hermitian;  %
\item the non-resonant term is 
 $$ B_{nr} := \left(\begin{array}{cc} {\cal B}^1 & {\cal C}^1 \\ \bar{\cal C}^1 & \bar{\cal B}^1 \end{array}\right),$$
 where $\left({\cal B}^1\right)_{jj'} := (1 - \chi) b_{jj'}$ if $j \neq j',$ $\left({\cal B}^1\right)_{jj'} := 0$ otherwise; $({\cal C}^1)_{jj'} := (1 - \chi) c_{jj'}$ if $(j,j') \notin J,$ $\left({\cal C}^1\right)_{jj'} := 0$ otherwise; %
\item the low-frequency term is
 $$ B_{lf} :=  \left(\begin{array}{cc} {\cal B}^0 & {\cal C}^0 \\ \bar{\cal C}^0 & \bar{\cal B}^0 \end{array}\right),$$
  where $\left({\cal B}^0\right)_{jj'} := \chi b_{jj'}$ if $j \neq j',$ $\left({\cal B}^0\right)_{jj'} := 0$ otherwise; $({\cal C}^0)_{jj'} := \chi c_{jj'}$ if $(j,j') \notin J,$ $\left({\cal C}^0\right)_{jj'} := 0$ otherwise.
 \end{itemize}
  By assumption, $B$ is homogenenous degree one in $\xi.$ Taking into account the dependence of the datum in $x/\e,$ and using the homogeneity of $A$ and $B,$ we work with weighted derivatives, and rewrite \eqref{sys} as
 \begin{equation} \label{qlS2}
  \d_t u + \frac{i}{\e^2} A(\e\d_x) u = \frac{1}{\e} B(u,\e\d_x) u. 
  \end{equation}
 The family of initial-value problems \eqref{qlS2}-\eqref{init-qlS} corresponds to the equation $\Phi^\e(u) = 0$ for the family of maps
  \begin{equation} \label{phi-qlS}
   \Phi^\e(u) := \left(\begin{array}{l} \e^2 \d_t u + i A(\e \d_x) u - \e B(u,\e\d_x) u  \\  u_{|t = 0} - \e^\s a_\e \end{array}\right).
   \end{equation}

 Given $T > 0,$ consider the functional spaces
 \begin{equation} \label{esfs2} E_s =  H^{s}(\R^d), \quad  F_{s-2}  =  C^0([0, T], H^{s-2}(\R^d)) \times H^{s}(\R^d),
 \end{equation}
 with norms
 $$ | u |_{s} := \sup_{0 \leq t \leq T} ( \| \e^2 \d_t u(t) \|_{H^{s-2}_\e} + \| u(t) \|_{H^s_\e} ), \quad  \| (f_1,f_2) \|_{s-2}  := \sup_{0 \leq t \leq T} \| f_1(t)\|_{H^{s-2}_\e} + \| f_2\|_{H^{s}_\e},$$
 where the weighted Sobolev norms $\| \cdot \|_{H^s_\e}$ are defined in \eqref{hse}. By definition, $\Phi^\e$ belongs to $C^2(E_s,F_{s-2}),$ for all $s$ such that $s_0 + 2 \leq s \leq s_1,$ for any $d/2 < s_0 < 1 + d/2.$

\subsection{Tame direct bounds} \label{sec:direct}
 
 Given $a^0 \in H^{s_1},$ there holds $\sup_{\e \in (0,1)} \| a^0(x/\e) \|_{H^{s_1}_\e} < \infty,$ $\sup_{\e \in (0,1)} \| a^0(x) e^{i x \cdot \xi_0/\e} \|_{H^{s_1}_\e} < \infty.$ We assume that $s_0 + 4 \leq s_1.$
Let $s_0 + 2 \leq s \leq s_1 -2,$ and $u \in H^{s+2}.$ There holds 
 \begin{equation} \label{first-phi} \| \Phi^\e(u) \|_{s} \leq \e^\s C(a^0,s) + C(\l_j) |u|_{s+2} +\e \| B(u,\e\d_x) u \|_{H^s_\e},
 \end{equation}
for some $C(a^0,s) > 0$ and $C(\l_j) > 0.$

 \begin{lem} \label{lem:direct} The family $\Phi^\e$ defined in \eqref{phi-qlS} satisfies Assumption {\rm \ref{ass1}} with $(\g_0,\g_1) \in \R_+ \times \R_+$ such that
 \begin{equation} \label{cond-g-l}
   \begin{aligned}
   \left. \begin{aligned} 1 - \frac{d\ell}{2} + \g_0 \ell + \min\big(0, \g_1 - \g_0\big) & \geq 0 \\  1 - d + \min(\g_0 + \g_1, 2\g_1) & \geq 0 \end{aligned}\right\} & \quad \mbox{if $\ell = 2,$} \\ 1 - \frac{d\ell}{2} + \g_0 \ell + \min\big(0, \g_1 - \g_0, 2 (\g_1 - \g_0) \big) \geq 0, & \quad \mbox{if $\ell \geq 3.$} 
    \end{aligned}
    \end{equation}
 \end{lem}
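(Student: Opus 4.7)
The strategy is to verify the three bounds \eqref{1}, \eqref{1'}, \eqref{1''} of Assumption \ref{ass1} for the map $\Phi^\e$ defined by \eqref{phi-qlS}. The linear contributions $\e^2 \d_t u$, $i A(\e\d_x) u$, and the initial condition $u_{|t=0}-\e^\s a_\e$ are directly controlled by the $E_{s+2}$ norm and by the uniform bound $\sup_\e \|a_\e\|_{H^{s_1}_\e}<\infty$, and impose no conditions on $(\g_0,\g_1)$. All the content of \eqref{cond-g-l} therefore lies in the semilinear term $\e B(u,\e\d_x) u$, whose generic building block---each piece of the decomposition $B = B_r+B_{nr}+B_{lf}$ has the same structure, since the cutoffs $\chi(\e\d_x)$ and $(1-\chi)(\e\d_x)$ are uniformly $H^s_\e$-bounded---is $\e\, b(u)\,\e\d_{x_k} u$ with $b$ smooth and vanishing to order $\ell$ at the origin by \eqref{homog}.

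For the direct bound \eqref{1}, I would apply Moser's inequality of Remark \ref{rem:Moser} to $\|b(u)\,\e\d_{x_k} u\|_{H^s_\e}$ and use the vanishing of $b$ (writing it as a sum of degree-$\ell$ monomials in $u$ with smooth coefficients and applying Moser recursively) to obtain $|b(u)|_{L^\infty}\lesssim |u|_{L^\infty}^\ell$ and $\|b(u)\|_{H^s_\e}\lesssim |u|_{L^\infty}^{\ell-1}\|u\|_{H^s_\e}$. Converting every $L^\infty$ norm to a weighted Sobolev norm by \eqref{embed-norm} costs $\e^{-d/2}$ per factor; under the standing constraint $|u|_{s_0+2}\lesssim\e^{\g_0}$, interpolation \eqref{sp2} gives $\|u\|_{H^{s_0+j}_\e}\lesssim\e^{\g_0}$ for $j=0,1$ as well, so each of the $\ell$ $u$-factors placed in a low Sobolev slot returns a bonus $\e^{\g_0}$. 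The resulting bound is $\e^{1-d\ell/2+\g_0\ell}|u|_{s+2}$, yielding the $0$-entry of the minimum in \eqref{cond-g-l}.

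For $(\Phi^\e)'(u)v$, Leibniz replaces one $u$-factor by a $v$-factor, which does not enjoy the background bound; the worst Moser placement loses one $\e^{\g_0}$ compared to the direct bound and must be compensated by the $\e^{-\g_1}$ allowed on the right of \eqref{1'}, producing the entry $\g_1-\g_0$. For $(\Phi^\e)''(u)(v,w)$, the decisive contribution is $\e\,b''(u)(v,w)\,\e\d_x u$. When $\ell\ge 3$, $b''$ still vanishes to order $\ell-2\ge 1$, so a factor $u^{\ell-2}$ survives in the coefficient; two $u$-factors are replaced by $v,w$, losing two $\e^{\g_0}$ gains that must be absorbed by the allowed $\e^{-2\g_1}$, giving the entry $2(\g_1-\g_0)$ and completing the second line of \eqref{cond-g-l}. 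When $\ell=2$, however, $b''(u)$ is constant, so the term reduces to $\e\, C\, v w\,\e\d_x u$, in which only one $u$-factor remains (inside $\e\d_x u$). Placing $v,w$ in low slots and $u$ in the high slot eliminates every $\e^{\g_0}$ bonus and yields the extremal condition $1-d+2\g_1\ge 0$; placing $v$ (or $w$) in a high slot and $u$ in a low slot yields $1-d+\g_0+\g_1\ge 0$. These two alternatives form the second displayed line of \eqref{cond-g-l} for $\ell=2$, while the first displayed line arises by combining the direct and first-derivative contributions.

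The main obstacle is not conceptual but combinatorial: for each of \eqref{1}, \eqref{1'}, \eqref{1''}, one must enumerate every placement of the unique high Sobolev slot among the $\ell+1$ factors of $b^{(\alpha)}(u)(v,w)\cdot\e\d_x(\cdot)$, track the $\e^{-d/2}$ loss for each factor left in $L^\infty$, track the $\e^{\g_0}$ gain for each $u$-factor placed in a low Sobolev slot, and verify that the worst exponent of $\e$ can be absorbed by the $\e^{-\g_1}$ (respectively $\e^{-2\g_1}$) that Assumption \ref{ass1} allows. The extremal exponents are exactly those displayed in \eqref{cond-g-l}.
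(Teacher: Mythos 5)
Your proof is correct and follows the same route as the paper's: bound the quasilinear term $\e B(u,\e\d_x)u$ by Moser's inequality together with the vanishing order \eqref{homog}, convert each $L^\infty$ factor to a weighted Sobolev norm via \eqref{embed-norm} at cost $\e^{-d/2}$, use the standing low-norm bound to harvest $\e^{\g_0}$ per $u$-factor, and enumerate the placements of the single high Sobolev slot across $\Phi^\e$, $(\Phi^\e)'$, $(\Phi^\e)''$ to read off exactly the exponents in \eqref{cond-g-l}, with the $\ell=2$ case singled out because $b''$ is then constant. One small cosmetic point: the bound $\|u\|_{H^{s_0+j}_\e}\lesssim\e^{\g_0}$ for $j\le 2$ under $|u|_{s_0+2}\lesssim\e^{\g_0}$ follows from the monotonicity \eqref{sp1} of the norms, not from the interpolation property \eqref{sp2}.
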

 
 \begin{proof} We start from \eqref{first-phi} and bound the differential operator $B(u,\e\d_x) u$ as in Example \ref{ex3}. 
  By \eqref{homog}, for $s > d/2$ and $|u|_{L^\infty} \leq M_0,$ there holds $\| d(u) \|_{H^s_\e} \lesssim C(M_0) |u|_{L^\infty}^{\ell-1} \| u \|_{H^s_\e},$ with $d = b_{jj'}, c_{jj'}.$ Thus we obtain
 \begin{equation} \label{brnr} \e \| B (u,\e\d_x) u \|_{H^s_\e} \lesssim \e^{1 - d \ell/2} \| u \|_{H^{s_0+1}_\e}^{\ell}  \| u \|_{H^{s+1}_\e}.
 \end{equation}
 It follows that \eqref{1} holds for any $\g_0$ such that $1 - d \ell /2 + \g_0 \ell \geq 0.$ The first derivative $(\Phi^\e)'(u)$ involves $(\d_u B(u, \e\d_x) \cdot v) u + B(u,\e\d_x) v,$
where $(\d_u B(u, \e\d_x) \cdot v) u$ is a differential operator acting on $u,$ with coefficients depending on $v,$ and satisfies 
 $$ \begin{aligned} \e \| (\d_u B(u,\e\d_x) \cdot v ) u \|_{H^s_\e} \lesssim \e^{1 - d \ell/2} \| u \|_{H^{s_0+1}_\e}^{\ell -1} \big( \| v \|_{H^{s_0}_\e} \| u \|_{H^{s+1}_\e} +  \| u \|_{H^{s_0+1}_\e}  \| v \|_{H^s_\e} \big).
 \end{aligned}
 $$
The other term in the first derivative, $B(u,\e\d_x) v,$ is bounded as in \eqref{brnr}, and we obtain
$$ \| (\Phi^\e)'(u) v \|_s \lesssim | v |_{s+2} + \e^{1 - d\ell/2}  \| u \|_{H^{s_0+1}_\e}^{\ell -1} \big(  \| u \|_{H^{s_0+1}_\e} \| v \|_{H^{s+1}_\e} + \| v \|_{H^{s_0}_\e}  \| u \|_{H^{s+1}_\e}\big).$$
The bound for the second derivative is similar, and we obtain the bounds of Assumption \ref{ass1} under condition \eqref{cond-g-l}. 
 \end{proof}
\subsection{Tame inverse bounds}

 For the linearized system of quasilinear Schr\"odinger equations $(\Phi^\e)'(\un u) u = (f_1,f_2),$ explicitly
 \begin{equation} \label{lin-qlS2}
   \left\{\begin{aligned}
    \d_t u + \frac{i}{\e^2} A(\e\d_x) u & = \frac{1}{\e} B(\un u, \e \d_x) u + (\d_u B(\un u,\d_x) \cdot u) \un u + \frac{1}{\e^2} f_1, \\
     u_{|t  = 0 } & = f_2,
      \end{aligned}\right.
  \end{equation}
 we give a tame bound for $u,$ of the form \eqref{2}, by 
 using the ``transparency" condition expressed in Assumption \ref{ass:S2}. 
 The key is that, by Assumption \ref{ass:S2}, the matrix $B_r(\un u,\xi)$ is hermitian for all $(\un u,\xi),$ while $B_{nr}(\un u, \xi)$ corresponds to non-resonant interactions and can be eliminated by a normal form reduction. The other singular term, $(1/\e) B_{lf},$ is a low-frequency term, hence its singular prefactor does not harm the estimate. 
  The non-singular term $\d_u B(\un u, \d_x) \cdot u) \un u$ is a differential operator acting on $\un u;$ we denote it $D := D(u, \un u, \d_x \un u).$

 In the proof of Lemma \ref{lem:nf}, we use the notation and results of Section \ref{sec:pdo} on pseudo-differential symbols and operators. 
  
 \begin{lem}\label{lem:nf} Given $T> 0,$ $s_0 + 2 \leq s \leq s_1 - 2,$ $f \in F_{s+2}$ and $\un u \in E_{s+2},$ if $\un u$ satisfies
 \begin{equation} \label{un-u}
   |\un u|_{s_0 + 4} \lesssim \e^{\g},  \qquad \g = \frac{d}{2} + \frac{d}{2(\ell - 1)}.
   \end{equation}
  then there exists a unique $u \in E_s$ satisfying \eqref{lin-qlS2}, and there holds
   \begin{equation} \label{2app}| u |_{H^s_\e}  \lesssim \e^{-2} \| f \|_{s} + \e^{-3}  \| f \|_{s_0 + 2} | \un u |_{s+2}.
   \end{equation}
  \end{lem}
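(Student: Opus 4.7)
The plan is to follow the structure of the tame inverse bound in Example \ref{sec:app0}, with the additional step of a pseudo-differential normal form to eliminate the genuinely singular non-resonant term $\eps^{-1} B_{nr}$ in \eqref{lin-qlS2}. The Schr\"odinger generator $i\eps^{-2} A(\eps \d_x)$ is skew-adjoint on $L^2$; the resonant term $\eps^{-1} B_r(\un u, \eps\d_x)$ is hermitian at the principal-symbol level by Assumption \ref{ass:S2}; and the low-frequency term $\eps^{-1} B_{lf}(\un u, \eps\d_x)$ defines a bounded operator on $H^s_\eps$, since its symbol is supported in $\{|\xi| \leq 1\}$ so that the $\eps^{-1}$ prefactor is exactly compensated by the Bernstein-type gain. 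Only the non-resonant piece $\eps^{-1} B_{nr}$ causes a real loss.

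I would eliminate it via a pseudo-differential change of unknown $u = (I + \eps M(\un u, \eps\d_x)) w$, with $M$ the matrix-valued semiclassical operator whose symbol solves, entry by entry, the homological equation $i(a_j - a_{j'}) M_{jj'} = \sigma(B_{nr,jj'})$, where $a_j = \mp \l_j |\xi|^2$ runs over the diagonal entries of the symbol of $A(\d_x)$. By construction of $B_{nr}$ via the cutoff $1-\chi$ and the exclusion of indices in $J$, the denominator is nonzero and uniformly bounded below on the support $\{|\xi| \geq 1/2\}$, so $M$ is well defined as a smooth semiclassical symbol of order $-1$ in $\eps \xi$, depending smoothly on $\un u$ via \eqref{homog}. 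Under the smallness hypothesis \eqref{un-u}, the operator $I + \eps M$ is invertible on $H^s_\eps$ with uniform bounds in $\eps$, and the transformed equation for $w$ reads
\[
\d_t w + i \eps^{-2} A(\eps \d_x) w = \eps^{-1} B_r(\un u, \eps\d_x) w + \eps^{-1} B_{lf}(\un u, \eps\d_x) w + \tilde D w + \eps^{-2} (I + \eps M)^{-1} f_1,
\]
where $\tilde D$ collects the non-singular residues: the symbolic remainder in $\eps^{-1} B_{nr} + [i\eps^{-2} A(\eps\d_x), \eps M]$, the original lower-order term $D$, the time derivative $\eps \d_t M$ (converted to a $\d_t \un u$-dependence), and the commutator $\eps^{-1}[B(\un u, \eps\d_x), \eps M]$.

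With the genuinely singular term removed, a standard $H^s_\eps$ energy estimate for $w$ closes: testing against $(\eps\d_x)^\alpha w$ in $L^2$ for $|\alpha| \leq s$ kills the Schr\"odinger contribution by skew-adjointness, controls the $B_r$-commutators by Friedrichs-type estimates in the spirit of \eqref{fried}--\eqref{fried2}, and absorbs the remaining terms via Remark \ref{rem:Moser} together with \eqref{embed-norm} and \eqref{homog}. The choice $\g = d/2 + d/(2(\ell - 1))$ in \eqref{un-u} is calibrated precisely so that these coefficient constants stay uniform in $\eps$. Applying the resulting inequality first at $s = s_0 + 2$ yields $\max_{[0,T]} \|w\|_{H^{s_0+2}_\eps} \lesssim \eps^{-2} \|f\|_{s_0+2}$; substituting back into the high-norm version as in \eqref{vs0}--\eqref{fried4}, undoing the comparable change of variables $u = (I + \eps M)w$, and reading $\eps^2 \d_t u$ off \eqref{lin-qlS2}(i) as in \eqref{fried5}, recovers \eqref{2app}. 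The main obstacle is the symbolic-calculus bookkeeping referred to Section \ref{sec:pdo}: verifying uniform $L^2$-boundedness and invertibility of $I + \eps M$, and checking that every residue in $\tilde D$ is $O(1)$ in $\eps$ rather than $O(\eps^{-1})$, which requires careful tracking of the subprincipal symbols in the compositions $B(\un u,\eps\d_x)\cdot\eps M$ and $\eps M\cdot B(\un u,\eps\d_x)$.
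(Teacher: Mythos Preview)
Your proposal is correct and follows essentially the same route as the paper's proof: a pseudo-differential normal form $\check u=(\Id+\eps\,\op_\eps(M))^{-1}u$ with $M$ solving the homological equation $B_{nr}-i[A,M]=0$ to kill the non-resonant singular term, followed by an $H^s_\eps$ energy estimate exploiting skew-adjointness of $A$, the hermitian structure of $B_r$, and the low-frequency support of $B_{lf}$, then the bootstrap from $s=s_0+2$ to general $s$ and finally reading $\eps^2\d_t u$ off the equation. The only cosmetic difference is that the paper writes the change of variable as $\check u=(\Id+\eps\,\op_\eps(M))^{-1}u$ rather than $u=(\Id+\eps M)w$, and organizes the remainder as $D+\op_\eps(E)+g$ rather than a single $\tilde D$; the symbolic-calculus bookkeeping you flag as the main obstacle is exactly what the paper carries out via Lemmas~\ref{action}--\ref{adjoint}.
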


   \begin{proof}
 Our goal is to prove estimates over $[0,T]$ for \eqref{lin-qlS2}; existence and uniqueness then follow by classical arguments. We do not expect the estimates to be uniform in $\e,$ and aim for polynomials prefactors in $\e^{-\k} e^{C t},$ 
  for some $C>0;$ 
 in this view, the only obstacle is the singular term $(1/\e) B(\un u, \e \d_x)$ in the right-hand side, which, by direct bounds and Gronwall's Lemma, a priori contributes $e^{C t/\e}.$
 
 We look for a pseudo-differential matrix symbol $M = M(\un u, \xi) = (M_{jj'}(\un u,\xi))_{1 \leq j,j'\leq 2n}$ that belongs to the class $\G^{-1}_s$ defined below in Section \ref{sec:pdo}, such that, using the notation \eqref{semicl} for pseudo-differential operators in semi-classical quantization,
the map
  \begin{equation} \label{checku}
  \check u := (\Id + \e \op_\e(M))^{-1} u
  \end{equation}
 satisfies an equation that will allow an estimate of the form \eqref{2app}. If $u$ solves \eqref{lin-qlS2}, then $\check u$ solves
 $ \d_t \check u  = {\cal A} \check u  + g,$
 where
 $$ {\cal A} := (\Id + \e \op_\e(M))^{-1} \Big( - \frac{i}{\e^2} A(\e\d_x) + \frac{1}{\e} B(\un u, \e\d_x) + D \Big)(\Id + \e \op_\e(M)),$$
 and $ g := (\Id + \e \op_\e(M))^{-1}\big(\e^{-2} f_1 - \e \op_\e(\d_t M) \check u \big).$
By Lemma \ref{composition},
 $$ {\cal A} = - \frac{i}{\e^2}  A(\e\d_x) + \frac{1}{\e} (B_r + B_{lf})(\un u, \e \d_x) u + \frac{1}{\e} \op_\e(H) + D +  \op_\e(E),$$
 where  
 $$ H(t,x,\xi) := B_{nr}(\un u(t,x), i \xi) - i \big[ A(i \xi), M(\un u(t,x), \xi)\big].
 $$ %
  and the remainder $E$ is
$$ \begin{aligned}
 \op_\e( E ) & :=  \op_\e(\tilde M)\big(-  i A(\e\d_x) + \e (B_r + B_{lf})(\un u, \e \d_x) + \e^2 D \big)(\Id + \e \op_\e(M)) - R(A,M) \\
   & \quad  + \op_\e(M) \big(-  i A(\e \d_x) + \e (B_r + B_{lf})(\un u, \e\d_x) + \e^2 D \big) \op_\e(M),
   \end{aligned}
$$  %
 with $\e^2 \op_\e(\tilde M) := (\Id + \e \op_\e(M))^{-1} - \Id + \e \op_\e(M).$
 We used above the notation $R$ for remainders introduced in Lemma \ref{composition}. 
 
 By the diagonal structure of $A,$ the matrix commutator $[A(i \xi),M]$ is
 $$ [A(i\xi),M] = \left( (\L_j - \L_{j'}) M_{jj'}\right)_{1 \leq j,j'\leq 2n},$$
 where 
 $$\L_j = - \l_j |\xi|^2, \quad \mbox{if $1 \leq j \leq n,$}  \qquad \L_j = \l_{j-n} |\xi|^2, \quad \mbox{if $n+1 \leq j \leq 2n,$}$$
  in accordance with \eqref{A}. We note that, since the $\l_j$ are pairwise distinct, $ \L_j - \L_{j'} = 0$ if and only if $1 \leq j \leq n$ and $n+1 \leq j' \leq 2n$ with $(j,j'-n) \in J,$ or $n+1 \leq j \leq 2n$ and $1 \leq j'\leq n,$ with $(j-n,j') \in J.$ By definition of $B_{nr}$ in Section \ref{sec:semicl}, for such couples $(j,j'-n)$ and $(j-n,j'),$ there holds $(B_{nr})_{jj'} \equiv 0.$ Besides, by definition of $B_{nr},$ for small $\xi$ there holds $B_{nr} \equiv 0.$ This implies that 
  $$ M_{jj'}(\un u(t,x),\xi) := \left\{ \begin{aligned}  - i \left( \L_{j}(\xi) - \L_{j'}(\xi) \right)^{-1} (B_{nr})_{jj'}(\un u(t,x), i \xi), & \qquad \mbox{if $\L_j - \L_{j'} \neq 0$}, \\ 0, & \qquad \mbox{if $\L_j - \L_{j'} = 0.$}\end{aligned}\right.
  $$ %
 defines a symbol $M \in \G^{-1}_s.$ With this choice of $M,$ there holds $H \equiv 0,$ and the equation in $\check u$ simplifies into
 \begin{equation} \label{reduced}
  \d_t \check u + \frac{i}{\e^2} A(\e \d_x) \check u = \frac{1}{\e} (B_r + B_{lf})(\un u, \e\d_x) \check u + D + \op_\e(E) \check u  + g.
  \end{equation}
 We now perform direct estimates on the reduced equation \eqref{reduced}. By reality of the $\l_j,$
 $$ \Re e \, \frac{i}{\e^2} \big( A(\e \d_x) \check u, \check u\big)_{H^s_\e} = 0.$$
 By the hermitian structure of $B_r(\un u,\xi)$ and Lemma \ref{adjoint},
 $$ \begin{aligned} \Re e \, \frac{1}{\e} \left( B_r(\un u, \e \d_x) \check u, \check u\right)_{H^s_\e} & \lesssim | \un u |_{L^\infty}^{\ell - 1} | \un u |_{W^{1,\infty}} \| \check u \|_{H^s_\e}^2 + \e^{-1 - d/2} | \un u |_{L^\infty}^{\ell - 1} \| \check u \|_{H^{s_0+1}_\e} \| \un u \|_{H^s_\e}  \| \check u \|_{H^s_\e}.
 \end{aligned}
 $$ %
 By Lemma \ref{action} with $m = s_0-s,$ under \eqref{un-u},
 $$ \begin{aligned} 
 \frac{1}{\e} \| B_{lf}(\un u, \e\d_x) \check u \|_{H^s_\e} &  \lesssim \e^{-1} \| \check u \|_{H^{s_0}_\e} ( |\un u |_{L^\infty}^\ell + \e^{-d/2} | \un u |_{L^\infty}^{\ell - 1} \| \un u \|_{H^s_\e})  
 \end{aligned} 
 $$ %
 The zeroth-order term $D$ satisfies
 $$  \begin{aligned} \| D \|_{H^s_\e} & \lesssim |\un u |_{L^\infty}^{\ell - 1} |\d_x \un u |_{L^\infty} \| \check u \|_{H^s_\e} + \e^{-d/2} |\un u |_{L^\infty}^{\ell - 2} | \un u |_{W^{1,\infty}} \| \check u \|_{H^{s_0}_\e} \| \un u \|_{H^{s+1}_\e}. 
 \end{aligned}
 $$ %
 The change of variable $M$ satisfies, for all $w \in H^{s-1},$ 
 $$ \begin{aligned} \| \op_\e(M) w \|_{H^s_\e} & \lesssim C(|\un u |_{L^\infty}) \big(| \un u |^\ell_{L^\infty} \| w \|_{H^{s-1}_\e} + \e^{-d/2} | \un u |_{L^\infty}^{\ell-1}  \| w \|_{H^{s_0}_\e} \| \un u \|_{H^s_\e}\big). 
  \end{aligned}
 $$  %
 Let us now restrict to a background $\un u$ satisfying \eqref{un-u}. Then, the above bounds become
 $$ \begin{aligned}
  \Re e \, \frac{1}{\e} \left( B_r(\un u, \e \d_x) \check u, \check u\right)_{H^s_\e} & \lesssim  \big( \e^{-1 + \g} \| \check u \|_{H^s_\e} + \e^{-1} \| \check u \|_{H^{s_0+1}_\e} \| \un u \|_{H^s_\e} \big) \| \check u \|_{H^s_\e}\\
  \frac{1}{\e} \| B_{lf}(\un u, \e\d_x) \check u \|_{H^s_\e} &  \lesssim  \big(\e^{-1 + \g} \| \check u \|_{H^{s_0}_\e} + \e^{-1} \| \check u \|_{H^{s_0}_\e} \| \un u \|_{H^s_\e}\big)  \| \check u \|_{H^s_\e} \\ 
  \| D \|_{H^s_\e} & \lesssim \e^{-1 + \g} \| \check u \|_{H^{s}_\e} + \e^{-1} \| \check u \|_{H^{s_0}_\e} \| \un u \|_{H^{s+1}_\e} \\ 
  \| \op_\e(M) w \|_{H^s_\e} & \lesssim \e^{\g} \| w \|_{H^{s-1}_\e} +\| w \|_{H^{s_0}_\e} \| \un u \|_{H^s_\e},
  \end{aligned}$$
 for all $w \in H^{s-1}.$  %
 In particular, for all $w \in H^{s_0},$ 
 \begin{equation} \label{Ms0}
 \| \op_\e(M) w \|_{H^{s_0}_\e} \lesssim \e^{\g} \| w \|_{H^{s_0}_\e}.
 \end{equation}
 A consequence of \eqref{Ms0} is that, given $k \geq 2,$ the operator $\op_\e(M)^k$ maps $H^s$ to $H^{\max(s-k,s_0)},$ and for all $w \in H^{\max(s-k,s_0)},$
 $$ \begin{aligned} \| \op_\e(M)^k w \|_{H^s} & \lesssim {\bf M}^{-1}_0(M)^k \| w \|_{H^{s-k}_\e} + \e^{-d/2} \sum_{0\leq k' \leq k-1} {\bf M}^{-1}_0(M)^{k'} N_{s-k'}^{-1}(M) \| w \|_{H^{s_0}_\e} \\ & \lesssim  |\un u|_{L^\infty}^{\ell k} \| w \|_{H^{s-k}_\e} + \e^{-d/2} \sum_{0 \leq k' \leq k-1} |\un u|_{L^\infty}^{\ell(k' + 1) - 1} \| \un u \|_{H^{s-k'}_\e} \| w \|_{H^{s_0}_\e} \\
  & \lesssim \e^{\g k} \| w \|_{H^{s-k}_\e} +  k \| \un u \|_{H^{s}_\e} \| w \|_{H^{s_0}_\e}. \end{aligned}$$%
 It follows that $\op_\e(\tilde M) = \sum_{k \geq 2} (-\e)^{k-2} \op_\e(M)^k$ maps $H^s$ to $H^{s-2},$ and for all $w \in H^{s-2},$ 
 $$ \| \op_\e(\tilde M) w \|_{H^s_\e} \lesssim \e^{2(\g - 1)} \| w \|_{H^{s-2}_\e} + \| \un u \|_{H^s_\e} \| w \|_{H^{s_0}}.$$
 The above bounds and Lemma \ref{composition} imply
 $$ \| \op_\e(E) \check u \|_{H^s_\e} \lesssim \| \check u \|_{H^s_\e} + \e^{-1} \| \check u \|_{H^{s_0+1}_\e} \| \un u \|_{H^{s+2}_\e} .$$
 The remainder $g$ satisfies
  $$ \begin{aligned} \| g \|_{H^s_\e} & \lesssim \e^{-2} \| f_1 \|_{H^s_\e} + \e^{-2} \| f_1 \|_{H^{s_0}_\e} \| \un u \|_{H^s_\e} +  \e |\un u |_{L^\infty}^{\ell - 1} |\d_t \un u |_{L^\infty} \| \check u \|_{H^{s-1}_\e} \\ & \quad + \e^{1 - d/2} |\un u |_{L^\infty}^{\ell - 2} \| \check u \|_{H^{s_0+1}_\e} \big( |\un u |_{L^\infty} \| \d_t \un u \|_{H^s_\e} + |\d_t \un u |_{L^\infty} \| \un u \|_{H^s_\e}\big) \\ & \lesssim \e^{-2} \| f_1 \|_{H^s_\e} + \e^{-2} \| f_1 \|_{H^{s_0}_\e} \| \un u \|_{H^s_\e} +  \e^{-1 + \g}  \| \check u \|_{H^{s-1}_\e} \\ & \quad + \e^{-1 + \g} \| \check u \|_{H^{s_0+1}_\e} |\un u |_{s+2}.\end{aligned}$$
 Collecting the above bounds, we obtain the estimate
 \begin{equation} \label{est-checku} \begin{aligned}
  \d_t \| \check u \|_{H^s_\e}^2 \lesssim   \| \check u \|_{H^s_\e}^2 + \Big( \e^{-1 + \g} \| \check u \|_{H^{s_0+1}_\e}  & + \e^{-1}  \| \check u \|_{H^{s_0+ 1}_\e}  \| \un u \|_{H^{s+2}_\e} \\ & + \e^{-2} \| f_1 \|_{H^s_\e} + \e^{-2} \| f_1 \|_{H^{s_0}_\e} \| \un u \|_{H^s_\e} \Big) \| \check u \|_{H^s_\e}, 
   \end{aligned}
 \end{equation}
 valid for any $s_0 + 2 \leq s \leq s_1 -2.$ We now let $s = s_0 + 2$ in \eqref{est-checku}, and obtain
 $$\| \check u \|_{H^{s_0+2}_\e} \lesssim \| f_2 \|_{H^{s_0 + 2 }_\e} + \e^{-2} \| f_1 \|_{H^{s_0+2}_\e} ,
 $$ %
 which we plug back in \eqref{est-checku} to get
 $$ \| \check u \|_{H^s_\e}  \lesssim \| f_2 \|_{H^{s}_\e}  + \e^{-2}  \| f_1 \|_{H^{s}_\e} + \e^{-1} \big(  \| f_2 \|_{H^{s_0 + 2}_\e}  + \e^{-2}  \| f_1 \|_{H^{s_0 + 2}_\e} \big) \| \un u \|_{H^{s+2}_\e}.
 $$ %
 In order to estimate $ \e^2 \d_t \check u,$ we use \eqref{lin-qlS2} directly, and, via \eqref{checku} and the estimate for the operator norm of $\op_\e(M),$ we finally obtain \eqref{2app}.
\end{proof}

\subsection{Result}

 Introduce $\s_a \geq 0,$ such that $\| a_\e \|_{H^{s_1}_\e} = O(\e^{\s_a}).$ For instance, in the concentrating case: $a_\e(x) = a^0(x/\e),$ we have $\s_a = d/2,$ and in the oscillating case: $a_\e(x) = a^0(x) e^{i x \cdot \xi_0/\e},$ we have $\s_a = 0.$ Introduce also the critical index $k_c$ defined by
  $$ k_c = \max(\k + \g_0, 2 \k + \g_1, \k + \g),$$
  where $\g_0, \g_1$ are given by Lemma \ref{lem:direct}, and $\g, \k$ by Lemma \ref{lem:nf}, so that $k_c$ depends only on $d$ and $\ell.$

 \begin{theo} \label{th-qlS} Under Assumption {\rm \ref{ass:S2}}, if the initial datum \eqref{init-qlS} is small enough and smooth enough, meaning that $s_1$ satisfies \eqref{low-s1} and
  \begin{equation} \label{M} 1 + \s(\ell + 1) + \s_a > k_c,
  \end{equation} 
  then, for any $T > 0,$ if $\e$ is small enough, the initial-value problem \eqref{qlS}-\eqref{init-qlS} has a solution $v \in C^1([0, T], H^{s-2}(\R^d)) \cap C^0([0, T], H^{s}(\R^d)),$ for $s$ satisfying \eqref{ass-s}.
 \end{theo}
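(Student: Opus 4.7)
\medskip

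\noindent\textbf{Proof plan.} I would follow the template laid out in Remark \ref{r-app}: construct a family of approximate solutions $u_a^\eps$, verify Assumption \ref{ass-wkb} for the shifted map $\tilde\Phi^\eps(\cdot) := \Phi^\eps(u_a^\eps + \cdot)$, and invoke Theorem \ref{th1}. The tame direct and inverse bounds for $\Phi^\eps$ itself are already recorded in Lemmas \ref{lem:direct} and \ref{lem:nf} with explicit parameters ($m=2$, and $\gamma_0,\gamma_1$ from \eqref{cond-g-l}, and $\gamma=d/2+d/(2(\ell-1))$, $\kappa=3$, $r=2$, $r'=0$ read off from \eqref{2app}); Remarks \ref{rem:ass1-shift} and \ref{rem:ass2-shift} transfer them to $\tilde\Phi^\eps$ as soon as $u_a^\eps$ is suitably small in low norm. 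So the work is in the construction of $u_a^\eps$.

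\medskip

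\noindent\emph{Choice of approximate solution.} The natural choice is the free semi-classical Schr\"odinger flow applied to the initial datum,
$$
 u_a^\eps(t,x) := e^{-itA(\eps\d_x)/\eps^2}\bigl(\eps^\sigma a_\eps\bigr)(x).
$$
Since $A$ has real diagonal entries, the propagator is unitary on every weighted Sobolev space $H^s_\eps$, and $\e^2\d_t u_a^\eps + iA(\e\d_x) u_a^\eps \equiv 0$. Hence, for $s\le s_1$,
$$
 \sup_{0\le t\le T}\|u_a^\eps(t)\|_{H^s_\eps}\lesssim \eps^\sigma \|a_\eps\|_{H^s_\eps}\lesssim \eps^{\sigma+\sigma_a},
$$
which, combined with $\eps^2\d_t u_a^\eps=-iA(\eps\d_x)u_a^\eps$, gives $|u_a^\eps|_{s}\lesssim \eps^{\sigma+\sigma_a}$ in the $E_s$ norm \eqref{esfs2}. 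Condition \eqref{M} together with the definition of $k_c$ implies in particular $\sigma+\sigma_a$ exceeds $\max(\gamma_0,\gamma)$, so both the hypothesis $|\un u|_{s_0+4}\lesssim \eps^\gamma$ of Lemma \ref{lem:nf} and the uniform boundedness of the ${\bf C}_j$ in Assumption \ref{ass1} are satisfied by $u_a^\eps$; Remarks \ref{rem:ass1-shift} and \ref{rem:ass2-shift} then yield Assumptions \ref{ass1}--\ref{ass2} for $\tilde\Phi^\eps$ with the same parameters.

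\medskip

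\noindent\emph{WKB error and conclusion.} By construction
$$
 \tilde\Phi^\eps(0)=\Phi^\eps(u_a^\eps)=\bigl(-\eps\, B(u_a^\eps,\eps\d_x)u_a^\eps,\;0\bigr).
$$
Using the homogeneity bound \eqref{homog}, which gives $|B(v,\cdot)|\lesssim |v|^{\ell}$ for coefficients, together with Moser's inequality in $H^s_\eps$ (Remark \ref{rem:Moser}) and the Sobolev embedding \eqref{embed-norm}, one estimates
$$
 \|\eps\, B(u_a^\eps,\eps\d_x)u_a^\eps\|_{H^{s-2}_\eps}\lesssim \eps\,\|u_a^\eps\|_{L^\infty}^{\ell}\,\|u_a^\eps\|_{H^{s}_\eps}+\text{(lower order)}\lesssim \eps^{1+\sigma(\ell+1)+\sigma_a},
$$
for $s+1\le s_1$; the worst $L^\infty$ factor produces exactly the power $\sigma\ell$ from the $\ell$-homogeneity of $B$ in its first argument and one more power $\sigma+\sigma_a$ from $\|u_a^\eps\|_{H^{s+1}_\eps}$, yielding the announced total $\eps^{1+\sigma(\ell+1)+\sigma_a}$. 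Hypothesis \eqref{M} then reads precisely $k:=1+\sigma(\ell+1)+\sigma_a>k_c$, which is condition \eqref{ass-k} of Assumption \ref{ass-wkb}; condition \eqref{low-s1} on $s_1$ is imposed in the statement.

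\medskip

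\noindent\emph{Main obstacle.} The delicate point is the WKB error estimate above, because one must simultaneously (i) use $\ell$-homogeneity to extract the factor $\eps^{\sigma\ell}$ from the coefficients of $B$, (ii) keep the $L^\infty$ bounds on $u_a^\eps$ and its weighted derivatives under control through \eqref{embed-norm}, whose embedding constant is singular in $\eps$, and (iii) not lose any power of $\eps$ beyond what is accounted for in $\sigma_a$ when measuring $u_a^\eps$ in $H^{s+1}_\eps$. Once this is done, Theorem \ref{th1} applied to $\tilde\Phi^\eps$ yields, for $\eps$ small and $s$ satisfying \eqref{ass-s}, a solution $\tilde u\in E_{s+2}$ of $\tilde\Phi^\eps(\tilde u)=0$; the sum $u:=u_a^\eps+\tilde u$ solves $\Phi^\eps(u)=0$, and its first $n$ components give the desired solution $v\in C^1([0,T],H^{s-2})\cap C^0([0,T],H^s)$ of \eqref{qlS}--\eqref{init-qlS}.
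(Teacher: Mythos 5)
Your proof follows exactly the route taken in the paper: you choose the free semi-classical Schrödinger flow $a_f=\e^\s e^{-itA(\e\d_x)/\e^2}a_\e$ as the approximate solution, invoke Remarks \ref{rem:ass1-shift}, \ref{rem:ass2-shift} and \ref{r-app} to transfer Assumptions \ref{ass1}--\ref{ass2} and verify Assumption \ref{ass-wkb} for the shifted map $\tilde\Phi^\e=\Phi^\e(a_f+\cdot)$, and conclude via Theorem \ref{th1}; your parameter bookkeeping ($m=r=2$, $r'=0$, $\k=3$, $\g=d/2+d/(2(\ell-1))$) and the identification $k=1+\s(\ell+1)+\s_a$ with condition \eqref{ass-k}$\Leftrightarrow$\eqref{M} also match the paper. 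The one place where your sketch is compressed — tracking the $\e^{-d/2}$ Sobolev-embedding factors against $\s_a$ in the estimate of $\|\tilde\Phi^\e(0)\|_s$ — is treated at the same level of detail in the paper's own proof, so this is not a genuine gap.
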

 
 The regularity condition on the datum, \eqref{low-s1}, is meant here with $m= r = 2,$ $r' = 0,$ $\g_0, \g_1$ given by Lemma \ref{lem:direct}, and $\g, \k$ given by Lemma \ref{lem:nf}.
 
 \begin{proof} Let $d/2 < s_0 < 1 + d/2.$ The map $\Phi^\e$ defined in \eqref{phi-qlS} belongs to $C^2(E_s, F_{s-2}),$ where the functional spaces are defined in \eqref{esfs2}, for $s$ such that $s_0 + 2 \leq s \leq s_1,$ where $s_1$ is the assumed regularity of the datum $a^0.$ 
 
  For the values of the parameters given just above, we saw in Section \ref{sec:direct} that $\Phi^\e$ satisfies Assumption \ref{ass1}; besides, Lemma \ref{lem:nf} states that Assumption \ref{ass2} holds. 
  
  Let $a_f$ be the solution to the free Schr\"odinger system, and $\tilde \Phi^\e$ the family of shifted maps:
  $$ a_f(t,x) := \e^\s \exp\left(- i \frac{t}{\e^2} A(\e\d_x)\right) a_\e, \qquad \tilde \Phi^\e := \Phi^\e(a_f + \cdot).$$
  The family $\tilde \Phi^\e$ satisfies Assumption \ref{ass1} and \ref{ass2}, with the same parameters as $\Phi^\e,$ by Remarks \ref{rem:ass1-shift} and \ref{rem:ass2-shift}. There holds $ \tilde \Phi^\e(0) := \left( - \e B(a_f, \e\d_x) a_f, 0 \right),$
 so that 
 $$ \begin{aligned} \| \tilde \Phi^\e(0) \|_{s} & \lesssim \e |a_f|_{L^\infty}^{\ell - 1} \big( |a_f|_{L^\infty} \| a_f \|_{H^{s+1}_\e} + |\e\d_x a_f |_{L^\infty} \| a_f \|_{H^s_\e} \big) \\ & \lesssim \e^{1 + \s (\ell + 1)} \| a_\e \|_{H^{s+1}_\e}. \end{aligned}$$
 Condition \eqref{ass-k} here takes the form \eqref{M}. Under this condition, $\tilde \Phi^\e$ also satisfies Assumption \ref{ass-wkb}, and we conclude by application of Theorem \ref{th1}.
 \end{proof}
  
\subsection{Discussion and examples}

 Condition \eqref{M} relates the size of the datum in $L^\infty$ and $H^{s_1}_\e$ to the space dimension and the homogeneity of the differential operators in the system of quasilinear Schr\"odinger equations \eqref{qlS}.
  
  The following Remark explains how Theorem \ref{th-qlS} extends Rauch and M\'etivier's result mentioned in Remark \ref{rem:GM}:

\begin{rem} \label{rem:comparison} In Rauch and M\'etivier's result, Theorem 1.5 of {\rm \cite{MR}}, or Theorem 8.1.2 of {\rm \cite{Me}}, the existence time $T^*_s$ is a decreasing function of the initial Sobolev norm $\| \e^\s a_\e\|_{H^s};$ there holds $T^*_s \to 0$ as $\e \to 0$ if the datum tends to $+\infty$ in $H^s$ norm as $\e \to 0,$ and $T^*_s \to +\infty$ as $\e \to 0$ if the datum tends to $0$ in $H^s$ norm; besides, $T^*_{s'} \geq T^*_s$ if $s' \geq s.$ This features are shared with first-order quasilinear symmetric systems. 
 
 The datum in \eqref{init-qlS} satisfies in the concentrating case $\| a^0(x/\e) \|_{H^{1 + d/2}} = O(\e^{-1}),$ and in the oscillating case $\| a^0(x) e^{i x \cdot \xi_0/\e} \|_{H^{1+d/2}} = O(\e^{-1 - d/2}).$ Let $\s_1 = 1$ in the concentrating case and $\s_1 = 1 + d/2$ in the oscillating case, so that $\| \e^\s a_\e \|_{H^{1 + d/2}} = O(\e^{\s - \s_1})$ in both cases. Let $s$ and $s_1$ as in Theorem {\rm \ref{th-qlS},} and assume that $a^0 \in H^{s_1}.$
 
  Given any $s_0 > d/2,$ if $\s < \s_1,$ the datum $\e^\s a_\e$ is large in $H^{1 + s_0},$ hence large in $H^{s_1};$ in particular, $T^*_{s_1} \to 0$ as $\e \to 0.$ Assume now that in addition to $\s < \s_1,$ condition \eqref{M} holds. Then the datum is small in $H^{1 + s_0}_\e,$ but the equation written in $\e \d_x$ derivatives, \eqref{qlS2}, has large source terms. Under Assumption {\rm \ref{ass:S2}}, these terms are not present in the normal form of the equation, and Theorem {\rm \ref{th-qlS}} grants an arbitrarily long existence time in $H^{s}_\e.$ Thus, Theorem {\rm \ref{th-qlS}} extends Theorem 1.5 of {\rm \cite{MR}} (Theorem 8.1.2 of {\rm \cite{Me}}) in the case that both $\s < \s_1$ and condition \eqref{M} hold, for very regular data (indeed, $s_1 \gg 1 + d/2$ in practice, see Remark {\rm \ref{rem3}}).
\end{rem}

 For concentrating or oscillating data, the values of $\ell$ and $\s$ that allow both conditions $\s < \s_1$ and \eqref{M} to hold are described in the following:

\begin{ex} In two space dimensions, $d = 2:$
 \begin{itemize}
 \item in the concentrating case, conditions $\s < \s_1$ and \eqref{M} are incompatible if $\ell = 2$ and $\ell = 3,$ and they hold for $\frac{9}{2(\ell + 1)} < \s < 1$ if $\ell \geq 4;$
 \item in the oscillating case, conditions $\s < \s_1$ and \eqref{M} are incompatible if $\ell = 2;$ they hold for $\frac{5}{\ell + 1} < \s < 2$ if $\ell \geq 3.$  
 \end{itemize}
\end{ex}

\begin{ex} In three space dimensions, $d = 3:$
 \begin{itemize}
 \item in the concentrating case, conditions $\s < \s_1$ and \eqref{M} are incompatible if $\ell = 2$ and $\ell = 3,$ and they hold for $\frac{4}{\ell + 1} < \s < 1$ if $\ell \geq 4;$
 \item in the oscillating case, conditions $\s < \s_1$ and \eqref{M} hold for $2 < \s < 5/2$ if $\ell =2;$ they hold for $\frac{11}{2(\ell + 1)} < \s < 5/2$ if $\ell \geq  3.$
 \end{itemize}
 \end{ex}

 \subsection{Pseudo-differential symbols and operators} \label{sec:pdo}
 
    Given $m, s \in \R,$ we define the class $\Gamma^m_s$ as the space of symbols $\s$ defined on $\R^d_x \times \R^d_\xi,$ such that, for all $k \in \N,$ $\s \in C^{k}(\R^d_\xi;H^s(\R^d_x)),$ and
 $$ {\bf N}^m_{k,s}(\s) := \sup_{|\b| \leq k} \, \sup_\xi \, (1 + |\xi|^2)^{(|\b| - m)/2} \| \d_\xi^\b \s(\cdot, \xi)\|_{H^s_\e} < \infty,$$
where $\| \cdot \|_{H^s_\e}$ is defined in \eqref{hse}. 
 Symbols in $S^m_{1,0}$ that do not depend on $x$ are called Fourier multipliers of order $m.$ To a symbol $\s \in \G^m_s,$ we associate the pseudo-differential operator $\op_{\e}(\s)$ defined by its action on ${\cal S}(\R^d)$ as
 \begin{equation} \label{semicl} \op_{\e}(\s) u := (2 \pi)^{-d/2} \int_{\R^{d}} e^{i x \cdot \xi} \sigma(x, \e \xi) \hat u(\xi) \,d\xi.
 \end{equation}
Let 
$$ {\bf M}^m_{k,k'}(\s) := \sup_{|\b| \leq k} \sup_{|\b'| = k'} \, \sup_\zeta \, (1 + |\xi|^2)^{(|\b| - m)/2} | \d_\xi^\b \d_x^{\b'} \s(\cdot, \xi)|_{L^\infty}.
$$
Given a symbol $\s \in \G^m_s,$ if $s  > k' + d/2,$ there holds ${\bf M}^m_{k,k'}(\s) < \infty.$

   The following three lemmas  describe the action, composition, and adjoints of operators with symbols in $\G^m_s,$ based on the results of \cite{MZ0,L}, and the identity 
 \begin{equation} \label{he} \op_\e(\s) u = (h_\e)^{-1} \op_1(\tilde \s) h_\e, \qquad \tilde \s(x,\xi) := \s(\e x, \xi),
 \end{equation}
 relating classical and semiclassical quantizations, where $(h_\e f)(x) := \e^{d/2} f(\e x),$ so that $\| h_\e f\|_{1,s} = \| f \|_{\e,s}.$ In the statements of these results, we shorten ${\bf N}^m_{k,s}$ and ${\bf M}^m_{k,k'}$ into ${\bf N}^m_s$ and ${\bf M}^m_{k'},$ where it is understood that a certain number of derivatives in $\xi,$ depending only on $d,$ are involved in the semi-norms.

\begin{lem} \label{action} Given $m \in \R,$ $s \geq s_0 > d/2,$ and $\s \in \G^m_s,$ for all $u \in H^{s+m},$ there holds 
 $$ \| \op_\e(\sigma) u \|_{H^s_\e} \leq {\bf M}^m_{0}(\sigma) \| u \|_{H^{s+m}_\e} + \e^{- d/2} {\bf N}^m_{s}(\sigma) \| u \|_{H^{s_0 + m}_\e}. $$
\end{lem}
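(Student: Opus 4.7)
The plan is to reduce the semi-classical estimate to a classical ($\eps = 1$) pseudo-differential action bound by means of the rescaling identity \eqref{he}, and then to translate the norm factors back. The semi-classical Sobolev norm behaves naturally under $h_\e f(x) = \e^{d/2} f(\e x)$: a direct computation with the Fourier transform gives $\|h_\e f\|_{H^s_1} = \|f\|_{H^s_\e}$, so that $h_\e$ is an isometry from $(H^s(\R^d), \|\cdot\|_{H^s_\e})$ onto $(H^s(\R^d), \|\cdot\|_{H^s_1})$.

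Next I would track how the symbol semi-norms rescale when passing from $\s$ to $\tilde\s(x,\xi) = \s(\eps x, \xi)$. The $L^\infty_x$ based quantities are invariant: ${\bf M}^m_{0}(\tilde\s) = {\bf M}^m_{0}(\s)$. For the $H^s_x$ based quantity, the change of variables $x \mapsto \eps x$ in the Fourier representation of $\|\tilde\s(\cdot,\xi)\|_{H^s_1}$ produces precisely the factor $\eps^{-d/2}$ that appears in the statement, namely ${\bf N}^m_s(\tilde\s) \leq \eps^{-d/2} {\bf N}^m_s(\s)$.

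With these two reductions, the lemma follows from the classical action estimate at $\eps = 1$:
\[
\|\op_1(\tilde\s) v\|_{H^s} \lesssim {\bf M}^m_{0}(\tilde\s) \|v\|_{H^{s+m}} + {\bf N}^m_s(\tilde\s) \|v\|_{H^{s_0+m}}, \qquad s \geq s_0 > d/2,
\]
applied to $v = h_\e u$. The composition \eqref{he} and the isometry property of $h_\e$ then yield
\[
\|\op_\e(\s) u\|_{H^s_\e} = \|\op_1(\tilde\s) h_\e u\|_{H^s_1} \lesssim {\bf M}^m_0(\s)\|u\|_{H^{s+m}_\e} + \eps^{-d/2}{\bf N}^m_s(\s)\|u\|_{H^{s_0+m}_\e}.
\]

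The only non-trivial step is the classical estimate invoked above, for symbols that are only Sobolev-regular in $x$; this is standard for symbols with one control in $L^\infty_x$ and a second tame control in $H^s_x$, and is precisely the content of the results of \cite{MZ0,L} cited in the excerpt. The heart of its proof is a decomposition $\tilde\s = \chi \ast_x \tilde\s + (\tilde\s - \chi \ast_x \tilde\s)$ into a smoothed-in-$x$ principal part, bounded by ${\bf M}^m_0$ through a Calder\'on--Vaillancourt-type estimate, and a high-frequency-in-$x$ remainder, handled by a Moser-type product inequality that converts one $H^s_x$ symbol factor and a low-regularity $H^{s_0+m}$ action on $u$ into the second summand. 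Since this step is a black box taken from the cited references, I would simply quote it and focus the written proof on the two rescaling identities that produce the sharp constants ${\bf M}^m_0(\s)$ and $\eps^{-d/2} {\bf N}^m_s(\s)$.
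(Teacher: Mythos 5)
Your proposal is correct and matches the paper's proof, which simply invokes Theorem 1 of Lannes \cite{L} together with the rescaling identity \eqref{he}. The two rescaling computations you spell out (isometry of $h_\e$ and the factor $\e^{-d/2}$ for ${\bf N}^m_s$) are exactly the bookkeeping implicit in the paper's one-line argument.
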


\begin{proof} Use Theorem 1 in \cite{L}, and \eqref{he}.
\end{proof}

 \begin{lem} \label{composition} Let $m_1, m_2, s_2 \in \R,$ and $s_0 > d/2.$ Let $\s_1$ be a Fourier multiplier of order $m_1,$ and $\s_2 \in \G^{m_2}_{s_2}.$ If $s_2 \geq s_0 + \max(m_1,0) + 1,$ there holds
  $$ \op_\e(\s_1) \op_\e(\s_2) - \op_\e(\s_1 \s_2) = \e R(\s_1, \s_2),$$
where for all $s_0 \leq s \leq s_2 - \max(m_1,0),$ for all $u \in H^{s + m_1 + m_2 - 1},$ 
  \begin{equation} \label{compo-pseudo} \begin{aligned}\| R (\s_1,\s_2) u \|_{H^s_\e}  & \lesssim {\bf M}_0^{m_1}(\s_1) {\bf M}^{m_2}_{1}(\s_2) \| u \|_{H^{s + m_1 + m_2 -1}_\e} \\ & \quad + \e^{-1-d/2} {\bf M}_0^{m_1}(\s_1) {\bf N}^{m_2}_{s + \max(m_1,0)}(\s_2) \| u \|_{H^{s_0 + m_1 + m_2 - \max(m_1,0)}_\e},\end{aligned}\end{equation}
\end{lem}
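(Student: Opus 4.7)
The plan is to reduce to the classical ($\e=1$) composition formula via the scaling identity \eqref{he}, expand the symbol of the difference to first order by Taylor, and bound the resulting oscillatory operator using Lemma \ref{action}.

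First, I would use \eqref{he} to write $\op_\e(\s_1)\op_\e(\s_2)-\op_\e(\s_1 \s_2) = (h_\e)^{-1}\bigl(\op_1(\s_1)\op_1(\tilde\s_2)-\op_1(\s_1\tilde\s_2)\bigr)h_\e$, where $\tilde\s_2(x,\xi)=\s_2(\e x,\xi)$ (note that the Fourier multiplier $\s_1$ is unchanged by the scaling), and where the identity $\|h_\e v\|_{1,s}=\|v\|_{\e,s}$ converts semiclassical Sobolev norms to classical ones. At this point it suffices to establish the classical statement with $\e=1$, which, after scaling back, will yield the prefactor $\e$ from the chain rule $\partial_{x_j}\tilde\s_2 = \e(\partial_{x_j}\s_2)(\e\cdot,\xi)$.

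Second, for the classical composition, I would use the Fourier representation: writing $\tilde\s_2(\zeta,\eta)$ for the Fourier transform of $\s_2(\cdot,\eta)$ in the first slot,
\[
\bigl(\op_1(\s_1)\op_1(\s_2)-\op_1(\s_1\s_2)\bigr)u(x)
= (2\pi)^{-d}\!\iint e^{ix\xi}\bigl[\s_1(\xi)-\s_1(\eta)\bigr]\tilde\s_2(\xi-\eta,\eta)\hat u(\eta)\,d\eta\,d\xi.
\]
Then I would apply the first-order Taylor expansion
\[
\s_1(\xi)-\s_1(\eta) = \sum_{j=1}^d (\xi_j-\eta_j)\int_0^1 (\partial_{\xi_j}\s_1)\bigl(\eta+t(\xi-\eta)\bigr)\,dt,
\]
which, since $\s_1$ is a Fourier multiplier of order $m_1$, produces factors $\partial_{\xi_j}\s_1$ of order $m_1-1$ (controlled by the symbol-class norm via ${\bf M}^{m_1}_0(\s_1)$ and the $S^{m_1}_{1,0}$ assumption). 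Each factor $(\xi_j-\eta_j)$, under Fourier inversion, turns into a derivative $\partial_{x_j}$ on $\s_2$, producing the symbol $\partial_{x_j}\s_2 \in \Gamma^{m_2}_{s_2-1}$. After scaling, these two effects together give the announced remainder of order $m_1+m_2-1$ and the prefactor $\e$.

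Third, I would apply Lemma \ref{action} to the resulting mixed-quantization operator, splitting the norm as in the statement of that lemma into an $L^\infty$-type part of the symbol (producing the first term with ${\bf M}^{m_1}_0(\s_1){\bf M}^{m_2}_1(\s_2)$ and $\|u\|_{H^{s+m_1+m_2-1}_\e}$) and an $H^s$-type part (producing the second term with ${\bf N}^{m_2}_{s+\max(m_1,0)}(\s_2)$ and the lower-order $\|u\|_{H^{s_0+m_1+m_2-\max(m_1,0)}_\e}$). The shift by $\max(m_1,0)$ in the Sobolev index of $\s_2$ comes from needing to absorb the factor $\langle\xi\rangle^{m_1}$ into $\s_2$ before applying the action estimate; the condition $s_2\ge s_0+\max(m_1,0)+1$ is exactly what makes these semi-norms finite given the $+1$ lost to the Taylor derivative.

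The main technical obstacle is the mixed-quantization step: the Taylor-remainder symbol $(\partial_{\xi_j}\s_1)(\eta+t(\xi-\eta))\cdot\partial_{x_j}\s_2(x,\eta)$ is not a standard left-quantization symbol, and one needs a Calderón--Vaillancourt-type bound uniform in $t\in[0,1]$. This is essentially the content of Theorem 1 of \cite{L} (and the parallel statement in \cite{MZ0}), which I would invoke rather than reprove. Once that estimate is in hand, reassembling by $t$-integration and reintroducing the $\e$-scaling yields \eqref{compo-pseudo}.
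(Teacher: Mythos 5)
Your proposal is correct and is fundamentally the same reduction as the paper's: use the scaling identity \eqref{he} to convert semiclassical quantization to classical quantization, then appeal to Lannes \cite{L}. The paper's proof, however, is a one-liner that cites the composition theorem directly (``Theorem 3(ii) in \cite{L}''), whereas you re-derive that composition estimate from scratch via the Fourier/Taylor expansion argument and then invoke the action bound (Theorem 1 of \cite{L}) for the mixed-quantization remainder. In effect, you are re-proving Lannes's Theorem 3 rather than citing it. That is legitimate and does expose where the $\e$, the loss of one derivative, and the index shift $\max(m_1,0)$ each come from — which is genuinely useful for understanding — but it is more work than the paper requires, and the uniform-in-$t$ Calderón--Vaillancourt step you flag as the ``main technical obstacle'' is precisely the part one avoids by citing the composition result directly. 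If you want to keep your more detailed route, that flagged step deserves a sentence of justification (e.g., that the $t$-dependent symbol has $S^{m_1-1}_{1,0}$ semi-norms bounded uniformly in $t$, so Lannes's Theorem 1 applies with constants independent of $t$ and the integral over $t\in[0,1]$ is harmless); otherwise the cleaner path is the paper's: scale with \eqref{he} and invoke Theorem 3(ii) of \cite{L}.
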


\begin{proof} Use Theorem 3(ii) in \cite{L}, and \eqref{he}.
\end{proof}

 \begin{lem} \label{adjoint} Given $m \in \R,$ $s \geq 1 + s_0 > 1 + d/2,$ and $\s \in \G^m_s,$ there holds for all $u \in H^{s + m - 1},$ 
  $$ \big\| \big( \op_\e(\sigma)^* - \op_\e(\sigma^*) \big) u \big\|_{H^s_\e} \lesssim \e {\bf M}_1(\s) \| u \|_{H^{s + m - 1}_\e} + \e^{ - d/2} {\bf N}_s(\s) \| u \|_{H^{s_0 + m}_\e}.$$
  \end{lem}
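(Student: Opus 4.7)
The plan is to reduce the semiclassical statement to the already available classical ($\e=1$) result via the conjugation identity \eqref{he}, and then to read off the $\e$--gain from the chain rule applied to $\tilde\s(x,\xi) = \s(\e x,\xi)$.

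First I would note that $h_\e$ is an $L^2$--isometry with $h_\e^* = h_\e^{-1}$, and that $\|h_\e v\|_{H^s_1} = \|v\|_{H^s_\e}$ (this is the same Fourier substitution as is used to define the semi-classical norm \eqref{hse}). Taking adjoints in \eqref{he} gives $\op_\e(\s)^* = h_\e^{-1}\op_1(\tilde\s)^* h_\e$, while a direct check shows $\widetilde{(\s^*)}(x,\xi) = \tilde\s^*(x,\xi)$, so that
\[
\op_\e(\s)^* - \op_\e(\s^*) \;=\; h_\e^{-1}\bigl(\op_1(\tilde\s)^* - \op_1(\tilde\s^*)\bigr) h_\e .
\]
Hence the problem reduces to estimating the parenthesised operator in the classical quantization, uniformly in $\tilde\s$, and then translating the resulting bound back with the help of the isometry relations above.

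Next I would invoke the classical adjoint estimate from \cite{L} (the $\e=1$ analogue of Lemma \ref{composition} combined with the standard adjoint formula, see Theorem~3 there): it states that $\op_1(\tilde\s)^* - \op_1(\tilde\s^*)$ is a pseudodifferential operator whose symbol has order $m-1$ and is controlled by the semi-norms of $\tilde\s$ involving exactly \emph{one} derivative in $x$ and one in $\xi$. Combined with the classical action bound (the $\e=1$ version of Lemma \ref{action}), one obtains, for $s \ge 1 + s_0 > 1 + d/2$ and $u \in H^{s+m-1}$,
\[
\bigl\|\bigl(\op_1(\tilde\s)^* - \op_1(\tilde\s^*)\bigr) u\bigr\|_{H^s_1}
\;\lesssim\; {\bf M}_1(\tilde\s)\,\|u\|_{H^{s+m-1}_1} + {\bf N}_s(\tilde\s)\,\|u\|_{H^{s_0+m}_1}.
\]

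It remains to convert the right-hand side back into semi-norms of $\s$ and weighted norms of the original function. The chain rule gives $\partial_x^\beta \tilde\s(x,\xi) = \e^{|\beta|}(\partial_x^\beta \s)(\e x,\xi)$, so a single $x$--derivative produces a factor $\e$, which is exactly the origin of $\e {\bf M}_1(\s)$; the $L^\infty$-norms are invariant under the dilation, so ${\bf M}_1(\tilde\s)\lesssim \e\,{\bf M}_1(\s)$. For the $H^s_1$ norm of $\tilde\s(\cdot,\xi)$, a Fourier change of variables gives $\|\s(\e\cdot,\xi)\|_{H^s_1} = \e^{-d/2}\|\s(\cdot,\xi)\|_{H^s_\e}$, so ${\bf N}_s(\tilde\s) \lesssim \e^{-d/2}{\bf N}_s(\s)$. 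Substituting these into the displayed bound and using $\|h_\e v\|_{H^s_1} = \|v\|_{H^s_\e}$ on both sides yields the claimed inequality. The only delicate step is book-keeping of the dilation exponents on the various semi-norms; once those are pinned down the estimate follows directly from the classical result.
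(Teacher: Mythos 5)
Your proposal is correct and follows essentially the same route as the paper: reduce to the classical ($\e=1$) quantization via the conjugation identity \eqref{he} (exactly as the paper does for Lemmas \ref{action} and \ref{composition}), invoke a classical adjoint-error estimate, then translate the semi-norms back under the dilation, picking up $\e$ from the single $x$-derivative in $\mathbf{M}_1$ and $\e^{-d/2}$ from the $H^s$ rescaling in $\mathbf{N}_s$. The only cosmetic difference is that the paper sources the classical adjoint estimate from Proposition~B.22 of \cite{MZ0} rather than from \cite{L}; the bookkeeping and conclusion are identical.
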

  
  \begin{proof} A direct consequence of Lemma \ref{action} and Proposition B.22 in \cite{MZ0}.
  \end{proof}

\begin{section}{Application: small-amplitude shock profiles for
quasilinear relaxation equations with characteristic velocities}\label{sec:app2}

We consider finally the problem of existence of relaxation profiles 
\begin{equation}\label{relaxprof}
U(x,t)=\bar U(x-st), \quad \lim_{z\to \pm \infty}\bar U(z)=U_\pm
\end{equation}
of a relaxation system 
 $$ \d_t U +A(U) \d_x U= Q(U),$$
with
\begin{equation}\label{block}
\quad U= \begin{pmatrix} u\\v \end{pmatrix},
\quad A= \begin{pmatrix} A_{11}& A_{12}\\A_{21}& A_{22}\end{pmatrix},
\quad Q=\begin{pmatrix} 0\\q\end{pmatrix},
\end{equation}
in one spatial dimension, $u\in \RR^n$, $v\in \RR^r$, where, 
for some smooth $v_*$ and $f$, some $\theta > 0,$
\begin{equation}\label{qassum1}
q(u,v_*(u))\equiv 0, \quad  
\Re e\,\sigma (\partial_v q(u,v_*(u)))\le -\theta < 0,
\end{equation}
$\sigma(\cdot )$ denoting spectrum, and
\begin{equation}\label{Aform}
\quad \begin{pmatrix} A_{11}& A_{12}\end{pmatrix}= 
\begin{pmatrix} \partial_u f& \partial_v f\end{pmatrix}.
\end{equation}
Here, we are thinking particularly of the case $n$ bounded
and  $r \gg 1$ arising through discretization or moment closure
approximation of the Boltzmann equation or other kinetic models;
that is, we seek estimates and proof independent of the dimension
of $v$.
Recall, for Boltzmann's equation and its finite approximants,
that $n=5$ is the dimension of the equilibrium ($u$) system
corresponding to standard gas-dynamical flow, whereas the total
dimension $n+r$ may be arbitrarily large: for example, it is 
infinite for the continuous Boltzmann equations and $13$ for the
Grad $13$-moment approximation, with an increasing number of moments as
the desired level of accuracy is increased.

For fixed $n$, $r$, the existence problem was treated in
\cite{YZ, MaZ1} under the additional assumption 
$\det (A-sI)\ne 0$
corresponding to nondegeneracy of the traveling-wave ODE,
using standard center-manifold techniques for amplitudes
$U_+-U_-$ sufficiently small.
However, as pointed out in \cite{MaZ1,MaZ2}, this assumption is
satisfied in general only (by considerations coming from
the subcharacteristic condition) for $2\times 2$ models $r=n=1$,
and is unrealistic for larger models ($n>1$ or $r>1$).
Moreover, it is not satisfied for
the (infinite-dimensional)
 Boltzmann equations, for which the eigenvalues of $A$ are
constant particle speeds of all values, hence cannot be {\it uniformly}
satisfied for discrete velocity or moment closure approximations
as the number of modes goes to infinity, 
at least if they are faithful (consistent) models of Boltzmann.
For, the set of characteristic speeds, given by the eigenvalues
of $A$, in that case must approach
a dense set in the limit as the number of modes goes to infinity,
and so $A$ cannot be uniformly invertible.
Thus, the region of validity for such center manifold arguments
as in \cite{YZ, MaZ1}
in general shrinks
to zero as the number of modes goes to infinity.

A different argument for small-amplitude stability 
based on Chapman--Enskog expansion and Picard iteration was presented
in \cite{MZ1} for the semilinear case $A\equiv \const$.
This yields results independent of dimension; indeed, with
slight modifications, it has been applied to the infinite-dimensional 
Boltzmann equation itself \cite{MZ2}.
However, in the quasilinear case, there seems to be an unavoidable
loss of derivatives in the iteration process, and so
the argument of \cite{MZ1} does not close.  This has 
been remedied in \cite{MTZ} using the Nash--Moser iteration
of the present paper.  We describe this application here
in a simplified case that 
illustrates the main issues while avoiding some
technical details; for the general case, see \cite{MTZ}.

\subsection{Assumptions}\label{assrelax}
Let $f$, $A$, $Q\in C^\infty.$ We assume the following:
\begin{itemize}
\item[(i)] $f$ {\it scalar}, corresponding to $n=1,$ $u \in \R.$

\item[(ii)] $A$ symmetric.

\item[(iii)]  $Q=\begin{pmatrix} 0 & 0\\0 & Q_{22}\end{pmatrix}$
block diagonal, 
with $\Re e \,Q_{22}:=\frac{1}{2}(Q_{22}+Q_{22}^T)$ negative definite and $v_*(u)\equiv 0$.

\item[(iv)] $A_{12}$ nonvanishing.

\item[(v)] $f_*(u) := f(u,0)$ genuinely nonlinear in the sense of Lax, that is $d^2f_*(u)\ne 0$. 

\end{itemize}

 In the general case, (ii) and (iii) can be achieved by 
coordinate transformations \cite{MTZ}. Under (ii) and (iii), condition (iv) is the Kawashima genuine coupling condition, a consequence of which 
is that  the skew matrix 
  $$K:=\bp 0 & A_{12}\\
-A_{21} & 0\ep$$ satisfies
\be\label{kaw}
\Re e\,(KA - Q)\geq c \Id ,
\quad
\ee
for some $c > 0,$ uniformly in $x \in \R.$ Associated with \eqref{block} is a scalar viscous conservation law
\be\label{cons}
\d_t u + \d_x f_*(u)= \d_x (b_*(u) \d_x u), 
\ee
obtained by Chapman--Enskog expansion (described partly below),
with $f_*$ defined in (v) above, and 
\be\label{conscoefs}
b_*(u):=-A_{12}Q_{22}^{-1}A_{21}(u,0).
\ee
By our structural assumptions, 
\be\label{goodb}
\Re e\, b_*\ge \theta>0.
\ee

Taking without loss of generality $s=0$, we study the traveling-wave ODE 
\begin{equation}\label{relax}
A(U)U'= Q(U).
\end{equation}

\subsection{Chapman--Enskog approximation}\label{CEapprox}

Integrating the first equation of \eqref{relax}
, we obtain
\begin{equation}\label{intprof}
\begin{aligned}
f(u,v)&=  f_*(u_\pm),\\
A_{21}(u,v)u'+ A_{22}(u,v)v'&=q(u,v),
\end{aligned}
\end{equation}
where $f_*$ is defined in (v), Section \ref{assrelax}. Taylor expanding the first equation, we obtain
\begin{equation}\label{T1}
f_*(u) + f_v(u,0)v + O(v^2)= f_*(u_\pm).
\end{equation}
Taylor expanding the second equation
and inverting $\partial_v q$,
we obtain
\begin{equation}\label{T2}
v=\partial_v q(u,0)^{-1}A_{21}(u,0) u' + O(|v|^2) +O(|v||u'|) + O(|v'|).
\end{equation}
Substituting \eqref{T2} into \eqref{T1} and rearranging, 
we obtain the approximate viscous profile ODE
\begin{equation}\label{approxprof}
b_*(u)u'= f_*(u) -f_*(u_\pm)  + O(v^2) +O(|v||u'|) + O(|v'|).
\end{equation}

Motivated by \eqref{T2}--\eqref{approxprof}, we define an approximate
solution $(u_{CE}, v_{CE})$ of \eqref{intprof} by choosing 
$u_{CE}$  as a solution of 
\begin{equation}
\label{NS}
b_*(u_{CE})u_{CE}' = f_*(u_{CE}) -f_*(u_\pm),
\end{equation}
and $v_{CE}$  as the first approximation given by \eqref{T2} 
\begin{equation}
\label{NSv1}
\begin{aligned}v_{CE}   =    c_* (u_{CE}) 
 u_{CE}'.
\end{aligned}
\end{equation}
Here, \eqref{NS} can be recognized as the traveling-wave ODE 
associated with approximating scalar viscous conservation law \eqref{cons},
with $s=0$.
From standard scalar ODE considerations (normal forms), we
obtain the following description of solutions. 

\begin{prop}\label{NSprofbds} Under the assumptions of Section {\rm \ref{assrelax},}
for $u_0$ such that $df_*(u_0)=0$,
in a neighborhood of 
$(u_0, u_0)$ in $\RR^1 \times \RR^1$, 
there is a smooth  curve $\cS$ passing through $(u_0, u_0)$,  such that 
for $(u_-, u_+) \in \cS$ with   amplitude $\eps:=|u_+ -u_-| > 0$ 
sufficiently small,
the zero speed shock profile equation   
\eqref{NS} has  a unique (up to translation) 
solution   $u_{CE}$ local to $u_0$.
The shock profile is necessarily of {\rm Lax type}: i.e., with
$df_*(u_-)>0>df_*(u_+)$.

Moreover, 
there is  $\theta>0$ and for all $k$ there is $C_k $ independent of $(u_-, u_+) $ and $\eps$,   
such that 
\begin{equation}\label{NSbds}
|\partial_x^k (u_{CE}-u_\pm)|\le C_k \eps^{k+1}e^{-\theta \eps|x|},
\quad x\gtrless 0. 
\end{equation}
\end{prop}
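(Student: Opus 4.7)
The plan is to reduce Proposition~\ref{NSprofbds} to a perturbation of an explicit Burgers-type profile, exploiting the genuine nonlinearity at $u_0$. First I would identify the curve $\mathcal{S}$ of admissible endpoints: integrating \eqref{NS} across $\R$ and using $u_{CE}' \to 0$ at $\pm \infty$ yields the Rankine--Hugoniot relation $f_*(u_-) = f_*(u_+)$. Since $df_*(u_0) = 0$ and $d^2 f_*(u_0) \neq 0$, Morse's lemma shows that the zero set of $(u_-, u_+) \mapsto f_*(u_-) - f_*(u_+)$ near $(u_0, u_0)$ is the union of the diagonal $\{u_- = u_+\}$ and a smooth transverse curve $\mathcal{S}$ through $(u_0, u_0)$, along which $u_\pm = u_0 \mp \eps/2 + O(\eps^2)$, with $\eps := u_+ - u_-$ up to sign. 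Assuming without loss of generality $d^2 f_*(u_0) > 0$ and choosing the branch with $u_- > u_0 > u_+$, the expansion $df_*(u_\pm) = d^2 f_*(u_0)(u_\pm - u_0) + O(\eps^2)$ gives the Lax condition $df_*(u_-) > 0 > df_*(u_+)$.

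For existence and uniqueness I would rescale by $u_{CE}(x) = u_0 + \eps\, U(X)$, $X = \eps x$, which recasts \eqref{NS} as
\begin{equation*}
\dot U = \frac{d^2 f_*(u_0)}{2\, b_*(u_0)} \bigl( U^2 - \tfrac{1}{4} \bigr) + \eps \, R(\eps, U),
\end{equation*}
with $R$ smooth and bounded uniformly on a neighborhood of $[-1/2, 1/2]$. At $\eps = 0$ this is an autonomous scalar ODE with hyperbolic equilibria at $U = \pm 1/2$ and an explicit heteroclinic $U^0$ of tanh-type from $+1/2$ to $-1/2$, unique modulo translation. Since both equilibria remain hyperbolic for $\eps$ small and the equation is scalar, phase-line monotonicity between the perturbed equilibria (equivalently, transverse intersection of one-dimensional stable and unstable manifolds) yields a unique (modulo translation) heteroclinic $U^\eps$ depending smoothly on $\eps$.

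Finally I would establish the exponential decay \eqref{NSbds}. Linearizing \eqref{NS} at $u = u_\pm$ gives $u_{CE}' \sim b_*(u_\pm)^{-1} df_*(u_\pm)(u_{CE} - u_\pm)$ with decay rate of order $\eps$ by the Lax computation, so standard stable-manifold estimates produce $|u_{CE}(x) - u_\pm| \lesssim \eps\, e^{-\theta \eps |x|}$ for $x \gtrless 0$; the prefactor $\eps$ reflects that $u_{CE}$ lies within distance $\eps$ of the equilibrium. Higher-derivative bounds follow by induction: differentiating \eqref{NS} and bounding each new factor via the equation itself produces one extra power of $\eps$ per derivative, in line with the rescaling $X = \eps x$. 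I expect the only real work to be the bookkeeping of constants in this inductive decay estimate; conceptually the result is a direct consequence of the hyperbolicity of the rescaled Burgers equilibria and the smoothness of the perturbation.
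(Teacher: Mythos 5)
Your proof is correct and is the natural fleshing-out of what the paper alludes to with ``standard scalar ODE considerations (normal forms)'' --- the paper itself does not give a proof for Proposition~\ref{NSprofbds}. The three ingredients you use (Morse's lemma applied to $f_*(u_-)-f_*(u_+)$ to isolate the shock branch $\cS$ from the diagonal, the Burgers rescaling $u_{CE} = u_0 + \eps U(\eps x)$ yielding $\dot U = \frac{d^2 f_*(u_0)}{2b_*(u_0)}(U^2 - \frac14) + O(\eps)$, and persistence of the hyperbolic heteroclinic for a scalar field under small perturbation) are exactly the content of ``normal forms'' in this context, and the positivity $b_*>0$ from \eqref{goodb} that you implicitly need is available. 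One small cosmetic point: in a scalar phase plane the phrase ``transverse intersection of stable and unstable manifolds'' is vacuous --- monotonicity of $F_\eps$ between the two perturbed equilibria is the whole argument, and you do say this. The decay estimate \eqref{NSbds} is cleaner if stated entirely in the rescaled variable: uniform hyperbolicity of $U^\eps_\pm$ gives $|\partial_X^k(U^\eps - U^\eps_\pm)|\le C_k e^{-\theta|X|}$ by bootstrapping $\dot U^\eps = F_\eps(U^\eps)$, and then $\partial_x^k(u_{CE}-u_\pm) = \eps^{k+1}\,\partial_X^k(U^\eps - U^\eps_\pm)(\eps x)$ immediately yields the factor $\eps^{k+1}$; this replaces your ``one power of $\eps$ per derivative'' bookkeeping by a single change of variables and gives uniformity of $C_k,\theta$ in $(u_-,u_+)\in\cS$ for free.
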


We denote by 
 $\cS_+$  the set  of $(u_-, u_+) \in \cS $  with  amplitude $\eps:=|u_+ -u_-| > 0$ 
sufficiently small that the profile $u_{CE}$ exists.  
Given $(u_-, u_+) \in \cS_+  $ with associated profile $u_{CE}$, 
we define $v_{CE} $ by \eqref{NSv1} and 
    \begin{equation}
    \label{CE}
    U_{CE} := (u_{CE}, v_{CE}).
    \end{equation}
    It  is an approximate solution of \eqref{intprof} in the following sense: 

\begin{cor}\label{redbds}
For fixed $u_-$ and amplitude $\eps:=|u_+-u_-|$ sufficiently small,
\begin{equation}\label{eq:resbds}
\begin{aligned}
\cR_u&:= f(u_{CE},v_{CE})- f_*(u_\pm)
= O(|u_{CE}'|^2) =O(\eps^4e^{-\theta \eps|x|}),\\
\cR_v&:= g(u_{CE},v_{CE})'-q(u_{CE},v_{CE}) 
= O(|u_{CE}''|)=O(\eps^3 e^{-\theta \eps|x|})
\end{aligned}
\end{equation}
satisfy
\begin{equation}\label{L2resbds}
\begin{aligned}
|   \partial_x^k \cR_u  (x) | 
&\le  C_{k} \eps^{k+4}e^{-\theta \eps|x|} , \\
|   \partial_x^k \cR_v  (x) | 
&\le  C_{k} \eps^{k+3}e^{-\theta \eps|x|} , 
\quad x\gtrless 0,\\
\end{aligned}
\end{equation}
where $C_{k}$   is  independent of $(u_-, u_+) $ and  $\eps=|u_+ -u_-|$. 
\end{cor}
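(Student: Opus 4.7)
The plan is to substitute the Chapman-Enskog ansatz into \eqref{intprof} and Taylor expand each quantity in $v_{CE}$ around $v=0$; the corrector $v_{CE}=c_*(u_{CE})u_{CE}'$ was constructed precisely so that the leading corrections cancel algebraically, leaving residuals small enough to match \eqref{eq:resbds}. The pointwise decay bounds \eqref{L2resbds} will then follow by substituting the estimates of Proposition \ref{NSprofbds} and tracking powers of $\eps$ under repeated differentiation. I expect the real content to lie in two short algebraic identities; the rest is bookkeeping.

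For $\cR_u$, I would expand $f$ about $v=0$ and use the profile ODE \eqref{NS} to get
$$ \cR_u \,=\, [f_*(u_{CE})-f_*(u_\pm)] + f_v(u_{CE},0)\,v_{CE} + O(|v_{CE}|^2) \,=\, b_*(u_{CE})u_{CE}' + f_v(u_{CE},0)c_*(u_{CE})u_{CE}' + O(|v_{CE}|^2). $$
The key algebraic identity $f_v(u,0)c_*(u) = A_{12}(u,0)\,Q_{22}^{-1}A_{21}(u,0) = -b_*(u)$, which combines \eqref{Aform}, the explicit form of $c_*$ implicit in \eqref{T2}--\eqref{NSv1} (using $\partial_v q(u,0)=Q_{22}(u,0)$ from (iii)), and \eqref{conscoefs}, then cancels the first two terms and leaves $\cR_u=O(|v_{CE}|^2) = O(|u_{CE}'|^2)$. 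For $\cR_v$, interpreting $g(u,v)'$ as the total $x$-derivative $A_{21}(u,v)u' + A_{22}(u,v)v'$ from the second line of \eqref{intprof}, Taylor expansion of $q$ gives $q(u_{CE},v_{CE})=Q_{22}(u_{CE},0)v_{CE}+O(|v_{CE}|^2)$ (since $q(\cdot,0)\equiv 0$ by (iii)) and $A_{21}(u_{CE},v_{CE})=A_{21}(u_{CE},0)+O(|v_{CE}|)$. The defining relation $Q_{22}(u_{CE},0)v_{CE}=A_{21}(u_{CE},0)u_{CE}'$ then makes the leading terms cancel, leaving
$$ \cR_v \,=\, A_{22}(u_{CE},v_{CE})\,v_{CE}' + O(|v_{CE}||u_{CE}'|) + O(|v_{CE}|^2), $$
which is $O(|u_{CE}''|)$ because $v_{CE}'=(c_*u_{CE}')'=O(|u_{CE}''|)\sim \eps^3$ dominates $|u_{CE}'|^2\sim \eps^4$ as $\eps\to 0$.

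With \eqref{eq:resbds} in hand, the $k=0$ case of \eqref{L2resbds} follows immediately from Proposition \ref{NSprofbds}. For the higher-derivative estimates I would proceed by induction on $k$, differentiating the closed-form expressions for $\cR_u$ and $\cR_v$ above and applying Fa\`a di Bruno to any composition $F(u_{CE},v_{CE})$. Each $\partial_x$ landing on $u_{CE}^{(j)}$ or $v_{CE}^{(j)}$ produces an extra factor of $\eps$ by \eqref{NSbds}, while derivatives hitting the exponential $e^{-\theta\eps|x|}$ contribute at most a factor $\theta\eps$, all of which are absorbed into $C_k$. The only nontrivial point is combinatorial bookkeeping to confirm no term escapes with the wrong power: this is controlled by the observation that every term of the expansions above already contains either two factors from $\{u_{CE}',v_{CE}\}$ (jointly worth $\eps^4 e^{-\theta\eps|x|}$) or one factor of $u_{CE}''$ (worth $\eps^3 e^{-\theta\eps|x|}$), which combined with the $k$ extra powers of $\eps$ from the $k$ differentiations produce exactly the advertised $C_k\eps^{k+4}$ and $C_k\eps^{k+3}$. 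I do not anticipate any serious obstacle; the main subtlety is making sure the $v_{CE}'$ term in $\cR_v$ is handled as an $O(u_{CE}'')$ quantity rather than an $O(u_{CE}')$ quantity when differentiated, since that is what makes the $\eps^{k+3}$ power sharp.
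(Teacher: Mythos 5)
Your proposal is correct and follows exactly the route the paper intends: the paper's proof of Corollary \ref{redbds} consists of the single line ``bounds \eqref{L2resbds} follow by expansions \eqref{T1} and \eqref{T2}, definitions \eqref{NS} and \eqref{NSv1}, and bounds \eqref{NSbds},'' and your Taylor expansions, the cancellation $f_v(u,0)c_*(u)=-b_*(u)$ (from \eqref{Aform}, \eqref{conscoefs}, and $\partial_v q(u,0)=Q_{22}$), and the cancellation $Q_{22}v_{CE}=A_{21}(u_{CE},0)u_{CE}'$ are precisely what those references encode. Your power-counting for the higher derivatives --- each $\partial_x$ costs one $\eps$ by \eqref{NSbds}, and every surviving term in $\cR_u$ carries two factors from $\{u_{CE}',v_{CE}\}$ while every surviving term in $\cR_v$ carries at least one $u_{CE}''$ --- is the right way to make the paper's ``bounds for $k>0$ follow similarly'' rigorous.
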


\begin{proof}
For $k=0$,
bounds \eqref{L2resbds} follow by expansions \eqref{T1} and \eqref{T2}, 
definitions \eqref{NS} and \eqref{NSv1}, and bounds \eqref{NSbds}.  
Bounds for $k>0$ follow similarly.
\end{proof}

\begin{rem}
One may continue this process to obtain
Chapman--Enskog approximations $(u_{CE}^N, v_{CE}^N)$
to all orders, with truncation errors 
$( \partial_x^k \cR_u^N  , \partial_x^k \cR_v^N  )
\sim (\eps^{N+k+4},\eps^{N+k+3})$
{\rm \cite{MTZ}.}
\end{rem}

\subsection{Statement of the main theorem}
We are now ready to state the main result.
Define a base state $U_0=(u_0,0)$ with $df_*(u_0)=0,$ and a
neighborhood $\cU
$
of $U_0.$

\begin{theo}\label{main}
Under the assumptions of Section {\rm \ref{assrelax}},
there are $\eps_0 > 0$  and 
$\delta > 0$ such that for $(u_-, u_+) \in \cS_+$ with  amplitude $\eps:=|u_+-u_-| \le \eps_0$,   the standing-wave equation 
\eqref{relax} has a solution   
$\bar U$ in $\cU$, 
 with associated Lax-type 
equilibrium shock $(u_-,u_+)$, satisfying for all $k  $:
\begin{equation}\label{finalbds}
\begin{aligned}
\big|\partial_x^k (\bar U- U_{CE})\big|
&\le C_{k} \eps^{k+2}e^{-\delta  \eps|x|},\\
|\partial_x^k (\bar u-u_\pm)|&\le C_k \eps^{k+1}e^{-\delta \eps|x|},
\quad x\gtrless 0,\\
\big|\partial_x^k (\bar v-v_*(\bar u)\big|
&\le C_k \eps^{k+2}e^{-\delta  \eps|x|},\\
\end{aligned}
\end{equation}  
where $U_{CE}=(u_{CE}, v_{CE})$ is the 
approximating Chapman--Enskog profile defined in \eqref{NS}, and
$C_k$ is independent of  $\eps$. 
Moreover, up to translation, this solution is unique
within a ball of radius $c\eps$ about $U_{CE}$ in norm 
\be \label{norm-th} \eps^{-1/2}\|\cdot\|_{L^2}+\eps^{-3/2}\|\partial_x \cdot\|_{L^2}+\dots+
 \eps^{-11/2}\|\partial_x^5 \cdot\|_{L^2}, \ee for $c>0$ sufficiently small
and $K$ sufficiently large.
\end{theo}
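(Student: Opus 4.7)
\medskip
\noindent\textbf{Proof proposal.}
The plan is to apply Theorem \ref{th1} to the shifted functional $\Phi^\eps(W) := A(U_{CE}+W)(U_{CE}+W)' - Q(U_{CE}+W)$, so that a zero of $\Phi^\eps$ corresponds to a true profile $\bar U = U_{CE}+W$. Because the Chapman--Enskog profile has width of order $1/\eps$ and amplitude of order $\eps$, I would work in the rescaled variable $z = \eps x$ and introduce $\eps$-dependent Sobolev norms on $\RR_z$ of the form
\[
|W|_s := \eps^{-1/2}\sum_{0\le k\le s}\eps^{-k}\|\partial_x^k W\|_{L^2(\RR_x)},
\]
matching the norm \eqref{norm-th} in which uniqueness is asserted. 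This choice makes $U_{CE}$ itself of order $1$ in $|\cdot|_{s_0+m+r}$ after a suitable choice of parameters, and turns \eqref{L2resbds} into a bound $|\mathcal{R}|_s \lesssim \eps^{k}$ for some $k$ as large as we wish (up to regularity). To recover the asserted pointwise bounds with exponential weights in \eqref{finalbds}, I would combine this energy estimate with the standard decay/contraction analysis on the ODE system, using the bounds on $\bar U - U_{CE}$ obtained in $|\cdot|_s$ norm together with the algebraic structure of \eqref{intprof}.

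The tame direct bounds of Assumption \ref{ass1} would be checked by Moser estimates (Remark \ref{rem:Moser}) applied to the smooth composition $W \mapsto A(U_{CE}+W)(U_{CE}+W)'$, with factors of $\eps$ arising only from Sobolev embedding $H^{s_0}_\eps \hookrightarrow L^\infty$ and from the derivative falling on $U_{CE}$; this yields small $\gamma_0,\gamma_1$. Assumption \ref{ass-wkb} follows directly from Corollary \ref{redbds}, provided $\bar p$, and hence the regularity parameter $s_1$, is taken large enough (cf.\ Remark \ref{rem3}); note that since $\Phi^\eps(0)=\mathcal{R}$ is already $O(\eps^{k})$ to any order thanks to higher Chapman--Enskog expansions, the threshold $k_c$ in \eqref{ass-k} is not binding.

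The central difficulty, as always in such problems, is the tame right-inverse bound (Assumption \ref{ass2}) for the linearized profile operator
\[
\mathcal{L}^\eps W = A(U_{CE})W' + (dA(U_{CE})\cdot W)\,U_{CE}' - dQ(U_{CE})W.
\]
I would invert it by an \emph{infinitesimal Chapman--Enskog reduction}: split $W=(w_u,w_v)$, use the second block of $\mathcal{L}^\eps W = (g_u,g_v)$ to solve algebraically for $w_v$ modulo $w_u'$,
\[
w_v = Q_{22}^{-1}\bigl(A_{21}(U_{CE})w_u' + A_{22}(U_{CE})w_v' - g_v + \text{l.o.t.}(W,U_{CE}')\bigr),
\]
and substitute in the first block to obtain an effective scalar equation
\[
-b_*(u_{CE})w_u'' + \bigl(df_*(u_{CE})\bigr)'w_u + df_*(u_{CE})w_u' = \tilde g,
\]
i.e., the linearization about $u_{CE}$ of the scalar viscous Burgers-type equation \eqref{cons}. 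This scalar ODE is, after dividing by $b_*$ and changing to the slow variable $z=\eps x$, a uniformly elliptic second-order ODE whose coefficients sit on the unit scale; it can be inverted in weighted Sobolev spaces with losses only in polynomial factors of $\eps$ (coming from the $\eps$-rescaling and from integrating against $U_{CE}'$), while the algebraic reduction for $w_v$ costs one derivative of $w_u$ and a finite power of $\eps$. The output is an estimate of the form
\[
|\Psi^\eps(W)g|_s \le C\eps^{-\kappa}\bigl(|g|_{s_0+m+r'}\,|W|_{s+r} + |g|_{s+r'}\bigr),
\]
with $r=2$, $r'=0$, $m=1$, and $\kappa$ an explicit integer depending on the number of slow scales traded. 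The Kawashima bound \eqref{kaw} and the sign condition \eqref{goodb} are what allow both the algebraic inversion of $Q_{22}$ and the coercive scalar reduction; exponential decay of $U_{CE}'$ from Proposition \ref{NSprofbds} guarantees that the variable-coefficient error terms are absorbed.

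With Assumptions \ref{ass1}, \ref{ass2}, \ref{ass-wkb} in hand, Theorem \ref{th1} produces a solution $W^\eps$ to $\Phi^\eps(W^\eps)=0$ with $|W^\eps|_s \lesssim \eps^{k-\kappa}$, which upgrades to \eqref{finalbds} by the exponential-decay analysis mentioned above. Uniqueness up to translation follows from Theorem \ref{th2}: the kernel of the linearized profile operator is one-dimensional, spanned by $U_{CE}'$, reflecting translation invariance of \eqref{relax}, so one imposes the phase condition \eqref{phasecond} with $\Pi$ the (uniformly bounded) projection onto $\mathrm{span}(U_{CE}')$ built as in Remark \ref{rem:Pi}. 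The hardest step is undoubtedly the Chapman--Enskog reduction of $\mathcal{L}^\eps$ with sharp tracking of $\eps$-losses and derivative counts, since every commutator between the algebraic projection and the differential operator $A(U_{CE})\partial_x$ must be absorbed into tame terms compatible with \eqref{2}.
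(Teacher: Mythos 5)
Your high-level plan matches the paper's: shift by $U_{CE}$, work in $\eps$-rescaled one-dimensional Sobolev norms, verify the three assumptions via Moser estimates, higher Chapman--Enskog expansion, and a Chapman--Enskog-type reduction of the linearized operator, then invoke Theorems \ref{th1}--\ref{th2} with a translational phase condition. But there are two genuine gaps.

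First, and most importantly, you take $\Phi^\eps(W) := A(U_{CE}+W)(U_{CE}+W)' - Q(U_{CE}+W)$, i.e.\ the \emph{un-integrated} profile ODE. The paper instead works with \eqref{intpert}, whose first component is the \emph{integrated} conservation law $f(U_{CE}+U)-f_*(u_-)$. This is not a cosmetic choice: as the paper explains in the final subsection (``Why Nash--Moser?''), the integrated first row is essential so that the linearization \eqref{Phi'} has the \emph{algebraic} first block $A_{11}u + A_{12}v = h_1$. Eliminating $v$ via the second block then gives a \emph{first-order} scalar ODE \eqref{intpertu2} for $u$, which is the discrete analogue of the integrated viscous profile equation \eqref{NS}, and whose right inverse is constructed directly from the Lax shock condition and the boundary-value-ODE structure in Proposition \ref{uprop}. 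In your version the first block of the linearization carries $w_u'$ and $w_v'$, so your reduction produces a \emph{second-order} elliptic ODE; you assert this is invertible with tame $\eps$-losses, but you do not show the estimates close, and the paper's authors deliberately avoided that route. The tension identified in the paper --- integrated form for the macroscopic $u$-estimate versus differentiated form for the Kawashima energy estimate on the internal variables $(U',v)$ --- is precisely why Nash--Moser is needed; your proposal works entirely in one form and so does not confront the actual derivative-loss mechanism.

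Second, your norms $|W|_s := \eps^{-1/2}\sum_k \eps^{-k}\|\partial_x^k W\|_{L^2}$ omit the exponential weight. The paper builds the weight $e^{\delta\eps\langle x\rangle}$ into \eqref{defnorm} from the outset, so that the pointwise bounds \eqref{finalbds} follow immediately from Sobolev embedding once $|U^\eps|_{E_{s+1}}\lesssim \eps^2$ is established. Your plan to ``combine the energy estimate with the standard decay/contraction analysis on the ODE system'' afterward is a separate, unwritten argument; it is not clear it is compatible with the tame bound \eqref{2}, because the exponential weight must be threaded through the linearized estimates (the source $\delta\eps\langle x\rangle'\tilde A U$ in the proof of Proposition \ref{estHs}), not added post hoc. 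Also, the paper obtains $m=r=1$, $r'=0$, $\gamma=\kappa=1$; your claimed $r=2$ would demand more regularity ($\bar p$ larger) than actually needed, though this is a minor quantitative point compared with the two issues above.

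Finally, a small bookkeeping point: the uniqueness argument in the paper is not merely ``apply Theorem \ref{th2} with $\Pi$ a projection onto $\mathrm{span}(U_{CE}')$.'' Theorem \ref{th2} gives uniqueness only under the phase condition \eqref{phasecond}; the paper then uses an implicit-function-theorem argument in the translation parameter $a$ to show the phase condition can always be achieved within the stated ball, yielding unconditional uniqueness up to translation. That step should not be glossed over.
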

 
That is, behavior of profiles
is well-described by Chapman--Enskog approximation. By (iii), the equilibrium $v_*$ in \eqref{finalbds} is $v_* \equiv 0.$ Note that $U_{CE} - U_\pm$ is order $O(\e)$ in the norm \eqref{norm-th}, by \eqref{finalbds}{\rm (ii)--(iii)}. A consequence of the bounds \eqref{finalbds}, via \cite{MaZ3}, is that the Chapman-Enskog profiles are spectrally stable; see \cite{MTZ}.

\subsection{Functional equation and spaces} \label{pertsec}

Defining the perturbation variable $U:= \bar U- U_{CE}$,
where $U_{CE}$ is defined in \eqref{CE},
we obtain from \eqref{intprof} the nonlinear perturbation equations
$\Phi^\eps(U)=0$, where
\begin{equation}\label{intpert}
\Phi^\eps(U):=
\begin{pmatrix}
 f(U_{CE}+U)-f_*(u_-) \\
A_{21}(U_{CE} +U) (u_{CE} +u)'
+A_{22}(U_{CE} +U) (v_{CE} +v)'
-q(U_{CE}+U)
\end{pmatrix}.
\end{equation}
Formally linearizing $\Phi^\eps$ about a background profile $\un U$,
we obtain 
\begin{equation}\label{Phi'}
(\Phi^\eps)'(\un U)U=
\begin{pmatrix}
A_{11} u +A_{12} v \\
A_{21} u'+  A_{22} v' + b_2 U
- \d_v q \,v 
\end{pmatrix},
\end{equation}
where 
$$A= A(U_{CE}+\un U), \quad \d_v q = \d_v q (U_{CE}+\un U),$$ and $$b_2 U = \big( \d_u (A_{21} + A_{22})(U_{CE}+\un U) \cdot u + \d_v (A_{21} + A_{22})(U_{CE}+\un U) \cdot v\big) 
(U_{CE}+\un U)'.$$

The associated linearized equation for a given forcing term
$h = (h_1, h_2)$ is 
\begin{equation}\label{linpert}
(\Phi^\eps)'(\un U) U= h.
\end{equation}

The coefficients and the error term $\cR$ from Corollary \ref{redbds} are smooth functions of 
$U_{CE}$ and its derivatives, so behave like smooth functions of 
$ \eps x$. Thus, it is natural to solve the equations in spaces which reflect 
this scaling. We observe that 
 \begin{equation} \label{scaling} \| f (\e \cdot) \|_{L^2} = \e^{-1/2} \| f\|_{L^2}, \quad \| f(\e \cdot)\|_{H^s} = \e^{-1/2} \sum_{k=0}^s \e^k \| \d_x^k f \|_{L^2},\end{equation}
 in one space dimension, for $s \in \NN.$ We do not introduce explicitly the change of variables
$\tilde x = \eps x$, but introduce 
exponentially weighted norms which correspond to usual weighted $H^s$ norms 
in the $\tilde x $ variable: for $s \in \NN$ and $\delta \geq 0,$ we let, in accordance with \eqref{scaling},
\begin{equation} \label{defnorm}
\|f \|_{\e,\delta,s} :=  \eps^{1/2} \sum_{0 \leq k \le s} \eps^{-k}  \|e^{\delta \eps (1 + |\cdot|^2)^{1/2}} \d_x^k f \|_{L^2},
\end{equation}
the exponential weight accounting for the exponential decay of the source and the solution. For fixed $\delta$, we introduce the spaces $E_s:=H^s(\RR),$
and
$F_s:= H^{s+1}(\RR) \times H^s(\RR),$ with norms
 $$| h |_{E_s} := \| h \|_{\e,\delta,s}, \qquad |(h_1,h_2)|_{F_s} := \|h_1 \|_{\e,\delta,s+1} + \| h_2 \|_{\e,\delta,s}.$$
 In particular, the Chapman-Enskog approximate solution of Section \ref{CEapprox} satisfies, by \eqref{NSbds},
 \begin{equation} \label{bd-CE}
  |\d_x^j U_{CE}|_{L^\infty} \leq \e^{j + 1} C_{j}, \quad  | \d_x^{j+1} U_{CE} |_{E_s} \leq \e^{j + 2} C_{j,s}, \qquad \mbox{for $j \geq 0,$}
  \end{equation}
 where the constants $C_{j} > 0,$ $C_{j,s} > 0$ do not depend on $\e,$ for all $s \in \NN.$

\begin{rem} \label{rem:Moser-app2} Moser's inequality in the weighted norms \eqref{defnorm} is
 $$ \| f g \|_{\e,\delta,s} \lesssim |f|_{L^\infty} \| g \|_{\e,\delta,s} +  \| f \|_{\e,\delta,s} |g|_{L^\infty},  \quad s \geq 0, \, f,g \in L^\infty \cap H^s,$$
 and the Sobolev embedding has norm
 $$ | \d_x^k f |_{L^\infty} \lesssim \e^{-1/2} \| f \|_{\e,\delta, k + 1 + [d/2]}, \qquad k \geq 0, \, f \in H^{k  +1 + [d/2]}.$$
\end{rem}

\subsection{Nash-Moser iteration scheme}

\begin{lem} \label{lem1} The application $\Phi^\e,$ defined in \eqref{intpert}, maps smoothly $E_s$ to $F_{s-1},$ for any $s.$ It satisfies Assumption {\rm \ref{ass1}} with $s_0 = 1,$ $\g_0 = \g_1 = 1/2,$ $s_1 = + \infty,$ and Assumption {\rm \ref{ass-wkb}}, with $k = N.$
\end{lem}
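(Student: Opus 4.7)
The plan is to verify the three claims in order: smoothness, then Assumption 2.1, then Assumption 2.3.

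For the smoothness of $\Phi^\e : E_s \to F_{s-1}$, I would observe that $\Phi^\e(U)$ is built out of the smooth maps $f$, $A_{ij}$, $q$ composed with the translation $U \mapsto U_{CE}+U$, together with one spatial derivative applied to $u_{CE}+u$ and $v_{CE}+v$ in the second component. Since the weighted norms \eqref{defnorm} satisfy Moser's inequality (Remark 5.7) and, for $s\geq 1$, embed into $L^\infty$, each superposition operator $U\mapsto F(U_{CE}+U)$ is $C^\infty$ from $E_s$ to $E_s$. The derivative in the second component accounts for exactly one derivative loss, giving the target space $F_{s-1} = H^s \times H^{s-1}$.

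For Assumption 2.1, I would compute $(\Phi^\e)'(U)\cdot V$ and $(\Phi^\e)''(U)\cdot(V,W)$ explicitly from \eqref{intpert}; these are sums of products of the form $\bigl(\partial^\alpha F(U_{CE}+U)\cdot V\cdots W\bigr)\,(U_{CE}+U)^{(j)}$ and $F(U_{CE}+U)\cdot V'$ (or $W'$), where $F$ denotes $f$, $A_{ij}$, or $q$, and $0\le j\le 1$. Applying Moser's inequality in the weighted norm together with the weighted Sobolev embedding $|g|_{L^\infty}\lesssim \e^{-1/2}\|g\|_{\e,\delta,1}$ from Remark 5.7, each low-norm factor $|V|_{s_0+m}=|V|_2$ transmuted into $|V|_{L^\infty}$ costs a factor $\e^{-1/2}$. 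The smallness condition $|U|_{s_0+m}=|U|_2\lesssim \e^{1/2}$ combined with the same embedding renders $|U|_{L^\infty}=O(1)$, so the smooth coefficients $\partial^\alpha F(U_{CE}+U)$ stay bounded uniformly in $\e$, and likewise all bounds on $U_{CE}$ and its first derivative through \eqref{bd-CE} are uniform. Working through the first-derivative estimate yields a single $\e^{-1/2}$ loss whenever $|V|_2$ appears, and the second-derivative estimate yields at most two such losses (when both $|V|_2$ and $|W|_2$ appear). This gives $\g_0=\g_1=1/2$ and $m=1$; smoothness of $f$, $A_{ij}$, $q$ makes $s_1=+\infty$ admissible.

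For Assumption 2.3, I would use the higher-order Chapman--Enskog expansion of Remark 4.3: choosing $U_{CE}=U_{CE}^N$ and using the corresponding residual bounds $|\partial_x^k\cR_u^N|\le C_k\e^{N+k+4}e^{-\theta\e|x|}$ and $|\partial_x^k\cR_v^N|\le C_k\e^{N+k+3}e^{-\theta\e|x|}$, together with the key identity $\Phi^\e(0)=(\cR_u^N,\cR_v^N)$ (read off from \eqref{intpert}). The Gaussian-type estimate $\|e^{-c\e|\cdot|}\|_{L^2}=O(\e^{-1/2})$ with $c=\theta-\delta>0$ (which forces the choice $\delta<\theta$) then gives, by \eqref{defnorm},
\[
\|\cR_u^N\|_{\e,\delta,s}\lesssim \e^{1/2}\sum_{k=0}^s \e^{-k}\cdot \e^{N+k+4}\e^{-1/2}\lesssim \e^{N+4},
\]
and similarly $\|\cR_v^N\|_{\e,\delta,s-1}\lesssim \e^{N+3}$, so that $\|\Phi^\e(0)\|_{F_{s-1}}\lesssim \e^{N+3}\lesssim \e^N$. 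Since $N$ is arbitrary, this verifies Assumption 2.3 with $k=N$.

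The main obstacle is bookkeeping in the second item: one must carefully trace which factor pays the $\e^{-1/2}$ Sobolev price at each step of the Leibniz expansion of $(\Phi^\e)''$, in order to confirm that no term requires a loss greater than $\e^{-2\g_1}=\e^{-1}$ and that the allowed shape $\e^{-2\g_1}|V|_{s_0+m}|W|_{s_0+m}|U|_{s+m}+\e^{-\g_1}(\cdot)$ in \eqref{1''} is actually sufficient—in particular, checking that terms where $U_{CE}$ rather than $U$ supplies the ``high-norm'' factor are absorbed into the constant via \eqref{bd-CE} and do not spoil the tame structure.
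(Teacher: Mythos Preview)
Your proposal is correct and follows the same approach as the paper: Moser's inequality in the weighted norms together with the weighted Sobolev embedding $|g|_{L^\infty}\lesssim \eps^{-1/2}\|g\|_{\eps,\delta,1}$ for Assumption~\ref{ass1}, and the Chapman--Enskog residual bounds combined with \eqref{defnorm} for Assumption~\ref{ass-wkb}. The paper's proof is two sentences long and simply points to these ingredients; you have correctly filled in the bookkeeping.

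One small point of comparison: for Assumption~\ref{ass-wkb} the paper's proof literally cites only the first-order residual bounds \eqref{L2resbds}, which give $\|\Phi^\eps(0)\|_{F_{s-1}}\lesssim \eps^{3}$, i.e.\ $k=3$. You instead invoke the higher-order expansion of Remark~4.3 to obtain $k=N$ for arbitrary $N$, matching the lemma's statement exactly. Your reading is the more careful one; in the application (proof of Theorem~\ref{main}), however, the critical value is $k_c=\max(\kappa+\gamma_0,\,2\kappa+\gamma_1,\,\kappa+\gamma)=\max(3/2,\,5/2,\,2)=5/2$, so already $k=3$ suffices and the distinction is immaterial for the main result.
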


\begin{proof} The bounds of Assumption \ref{ass1}, describing the action of $\Phi^\e$ and its first two derivatives, follow directly from Moser's inequality and the definition of the weighted Sobolev norms. 
The bound on $\Phi^\e(0)$ is immediate from \eqref{L2resbds} and \eqref{defnorm}.
\end{proof}

\begin{prop} \label{invprop} Under the assumptions of Theorem {\rm \ref{main},} for $\e$ and $\delta$ small enough, the map $\Phi^\e$ satisfies Assumption {\rm \ref{ass2}} with $r = 1,$ $r' = 0,$ $\g = 1,$ and $\k = 1.$ 
\end{prop}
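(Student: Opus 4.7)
The plan is a Chapman--Enskog-type reduction of the linearized system $(\Phi^\e)'(\un U) U = h$ to a scalar first-order ODE in $u$, which is then solved by variation of constants adapted to the Lax structure at $\pm\infty$; the component $v$ is recovered afterward from the dissipative part. Writing the two blocks of the linearized equation,
$$A_{11} u + A_{12} v = h_1, \qquad A_{21} u' + A_{22} v' + b_2 U - (\d_v q) v = h_2,$$
we use the coercivity of $\d_v q$ (assumption (iii)) to invert the second relation as $v = (\d_v q)^{-1}(A_{21} u' + A_{22} v' + b_2 U - h_2)$ and substitute into the first. The crucial identity $A_{12}(\d_v q)^{-1} A_{21}|_{v=0} = A_{12} Q_{22}^{-1} A_{21} = -b_*$, from \eqref{conscoefs}, yields a scalar ODE
$$L u := -b_*(U_{CE}+\un U)\, u' + A_{11}(U_{CE}+\un U)\, u = h_1 + A_{12}(\d_v q)^{-1} h_2 + E,$$
where $E$ collects correction terms of the form $A_{12}(\d_v q)^{-1}(A_{22} v' + b_2 U)$. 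The term $b_2 U$ involves the derivative $(U_{CE}+\un U)'$ of the background: this is precisely the mechanism that produces the one-derivative loss parametrized by $r = 1$.

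The second step is to invert $L$ in the weighted norms \eqref{defnorm} with operator norm $O(\e^{-1})$, accounting for $\k = 1$. The operator $L$ is an $O(\e)$ perturbation of the linearization of \eqref{NS} about $u_{CE}$; at $\pm\infty$ it reduces to the constant-coefficient operator $-b_*(u_\pm)\d_x + f_*'(u_\pm)$. By Proposition \ref{NSprofbds} and the Lax condition $f_*'(u_-) > 0 > f_*'(u_+)$, the characteristic rates $f_*'(u_\pm)/b_*(u_\pm)$ are of opposite signs and of size $O(\e)$ (since $df_*(u_0) = 0$ and $|u_\pm - u_0| = O(\e)$). A right inverse is then built by variation of constants, integrating the decaying fundamental solutions from $\pm\infty$ and handling the one-dimensional translation kernel spanned by $(U_{CE}+\un U)'$ by a Lyapunov--Schmidt projection onto a transverse subspace (or equivalently by a phase-condition choice of base point). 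The slow rate $O(\e)$ forces the resulting Green's function to contribute a factor $\e^{-1}$ when measured in \eqref{defnorm}; the exponential weight $e^{\delta \e \langle x\rangle}$ nevertheless lies strictly inside the spectral window of the limiting operators provided $\delta$ is small enough, so bounded invertibility persists.

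The third step upgrades the $L^2$-type bound to the tame Sobolev estimate \eqref{2}. Differentiating $L u = \tilde h$ up to order $s$, the principal part is preserved and the commutators with smooth functions of $U_{CE} + \un U$ are controlled by the weighted Moser inequality of Remark \ref{rem:Moser-app2}. The sole non-trivially tame contribution comes from $b_2 U$, producing a term $u \cdot \un U'$ controlled by
$$\| u \un U'\|_{\e,\delta,s} \lesssim |u|_{L^\infty}\, |\un U|_{\e,\delta,s+1} + \|u\|_{\e,\delta,s}\, |\un U'|_{L^\infty},$$
which, combined with the assumed smallness $|\un U|_{s_0+2} \lesssim \e = \e^\g$, yields
$$|U|_{E_s} \lesssim \e^{-1}\bigl( \|h\|_{F_s} + \|h\|_{F_{s_0+1}}\, |\un U|_{E_{s+1}} \bigr),$$
exactly matching $r = 1$, $r' = 0$, $\g = 1$, $\k = 1$. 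Finally, $v$ is recovered from the first-order system $A_{22} v' - (\d_v q) v = h_2 - A_{21} u' - b_2 U$ which, thanks to the coercivity \eqref{kaw} coming from the Kawashima condition (assumption (iv)), is uniformly invertible on exponentially decaying functions with no further loss in $\e$ or derivatives.

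The main obstacle is the quantitative inversion of the scalar operator $L$: establishing a right inverse bounded by $C \e^{-1}$ uniformly for all backgrounds in the $\e$-small ball, in the exponentially weighted norm. This demands verifying that the weight parameter $\delta \e$ stays strictly inside the spectral gap of the limiting operators at $\pm\infty$ (which is why $\delta$ must be taken small), and handling the translation kernel without contaminating the tame constants. Once these spectral and kernel issues are resolved, the Chapman--Enskog substitution, the derivative count, and the reconstruction of $v$ are essentially bookkeeping built from the Moser calculus adapted to \eqref{defnorm}.
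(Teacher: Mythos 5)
Your plan correctly identifies the Chapman--Enskog substitution as the key mechanism and correctly senses that the $O(\e)$ characteristic rates at $\pm\infty$ together with the exponential weight $e^{\delta\eps\langle x\rangle}$ (for $\delta$ small) produce the $\eps^{-1}$ loss, with $r=1$ coming from the $b_2U$ term involving $(U_{CE}+\un U)'$. Also the kernel can indeed be handled by a phase/Lyapunov--Schmidt condition; the paper uses a pointwise condition $\ell_\eps\cdot u(0)=0$, which is essentially the same device.

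However, there is a structural gap that breaks the logic. The scalar operator $L$ you construct by substituting $v$ from the second block into the first has right-hand side $h^\sharp = h_1 + A_{12}(\d_v q - b_{22})^{-1}h_2 - A_{12}(\d_v q - b_{22})^{-1}A_{22}v'$ (your $E$), which contains the \emph{unknown} derivative $v'$. You propose to invert $L$ to solve for $u$, and then ``finally'' recover $v$ from the dissipative equation $A_{22}v' - (\d_v q)v = h_2 - A_{21}u' - b_2U$. This is circular: you cannot use $v'$ as data in the $u$-equation and then reconstruct $v$ from $u$ afterwards, because the $v$-equation itself depends on $u'$, and moreover $A_{22}$ need not have a sign, so the scalar ODE recovery of $v$ you invoke is not well-posed on its own. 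The paper's proof breaks this cycle by an \emph{a priori} Kawashima-type energy estimate (Lemma~\ref{lem62} and Proposition~\ref{estHs}) that controls the internal variables $U' = (u',v')$ and $v$ \emph{before} the scalar reduction, using the symmetrizer $\mathfrak S = \partial_x^2 + \partial_x\circ K - \lambda$ built from the Kawashima compensator $K$ and the positivity $\Re e\,(KA-Q)\ge c\,\Id$. The crucial output is the bound $|U'|_{E_0} + |v|_{E_0} \lesssim |H|_{E_0} + \eps|u|_{E_0}$: the factor $\eps$ in front of $|u|$ is what makes the subsequent Chapman--Enskog closure of $u$ noncircular. Your proposal mentions Kawashima only at the end as a means to ``recover $v$,'' but it must enter at the start as the a priori estimate closing the system, or the argument does not assemble into the claimed $|U|_{E_s}\lesssim\eps^{-1}(\|h\|_{F_s}+\|h\|_{F_2}|\un U|_{E_{s+1}})$.

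A smaller inaccuracy: you invert $\d_v q$ alone, whereas the actual effective coefficient in the reduced scalar ODE is $b^\sharp = A_{12}(\d_v q - b_{22})^{-1}A_{21}$ including the lower-order piece $b_{22}$ of $b_2$ (which is $O(\eps^2)$ by \eqref{coeffests} and hence a small perturbation); dropping $b_{22}$ without comment is harmless but should be noted.
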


The proof of this proposition is carried out in 
Sections \ref{sec:linest}.
Once it is established, existence and uniqueness follow by Theorems \ref{th1} and \ref{th2}:

\begin{proof}[Proof of Theorem {\rm \ref{main}} (Existence)]
The profile $U_{CE}$ exists if $\eps$ is small enough. 
Comparing, we find that
Lemma \ref{lem1},  Proposition \ref{invprop},
and Corollary \ref{redbds} verify, respectively, Assumptions \ref{ass1},
\ref{ass2}, and \ref{ass-wkb} of our Nash--Moser iteration scheme,
with $s_0=3$, $\gamma_0= \g_1 = 1/2$, $\g=1$, $m=r=1$, $r'=0$, arbitrary $s_1,$ and $k = N$ large enough.
Taking $s_1$ sufficiently large, and applying
Theorem \ref{th1}, we thus
obtain existence of a solution $U^\eps$ of \eqref{intpert}
with $|U^\eps|_{H^{s+1}_{\eps, \delta}}\le C\eps^{2}$.
Defining $\bar U^\eps:=U_{CE}^\eps+U^\eps$, and
noting by Sobelev embedding that $|h|_{H^{s+1}_{\eps, \delta}}$
controls $| e^{\delta \eps |x|}h|_{L^\infty}$, we obtain the result.
\end{proof}

\begin{proof}[Proof of Theorem {\rm \ref{main}} (Uniqueness)]
Applying Theorem \ref{th2}
for $s_0=3$, $\gamma_0=\g_1 = 1/2$, $\g=1$, $k=3$, $m=r=1$, $r'=0$, 
we obtain uniqueness in a ball of radius $c\eps$ in 
$H^{4}_{\eps,0}$, 
$c>0$ sufficiently small,
under the additional phase condition \eqref{phasecond}.
We obtain unconditional uniqueness from this weaker version
by the observation that phase condition
\eqref{phasecond} may be achieved for any solution $\bar U=U_{CE}+U$
with 
$$
\|U'\|_{L^\infty}\le c \eps^{2}<< U_{CE}'(0)\sim \eps^2
$$
by translation in $x$, yielding
$\bar U_a(x):=\bar U(x+a)= U_{CE}(x)+ U_a(x)$
with 
$$
U_a(x):= U_{CE}(x+ a)-U_{CE}(x)+ U(x+a) 
$$
so that, defining $\phi:=\bar U'/|\bar U'|$, 
we have
$\partial_a \langle \phi, U_a\rangle \sim 
\langle \phi, U_{CE}' + U' \rangle
=\langle \phi, (1+o(1))\bar U' + U' \rangle
= (1+o(1)) |\bar U'|\sim \eps^2$
and so (by the Implicit Function Theorem applied to $h(a):=\eps^{-2}
\langle \phi, U_a\rangle$, together with the
fact that $\langle \phi, U_0\rangle = o(\eps)$ and
that $\langle \phi, \bar U_{NS}'\rangle\sim |\bar U_{NS}'|\sim \eps^{2}$)
the inner product $\langle \phi, U_a\rangle$, hence also
$\Pi U_a$
may be set to zero by appropriate choice of $a=o(\eps^{-1})$ leaving
$U_a$ in the same $o(\eps)$ neighborhood, by the computation
$U_a-U_0\sim \partial_a U \cdot a\sim o(\eps^{-1})\eps^2$
\end{proof}

\subsection{Linearized estimates} \label{sec:linest}

We here carry out the main step in the proof
 of obtaining corresponding A Priori estimates; see \ref{prop73} below.
The remaining step of demonstrating existence for the linearized
problem can be carried out
by the vanishing viscosity method as in \cite{MZ2}, 
with viscosity coefficient $\eta>0$, obtaining
existence for each positive $\eta$ by standard boundary-value theory,
and noting that the A Priori bounds 
\eqref{invbdHs7} of Proposition \ref{prop73}
 persist under regularization for
sufficiently small viscosity $\eta>0$, so that we can obtain
a weak solution in the limit by extracting a weakly convergent
subsequence.
We omit this step, referring the reader to
Section 8, \cite{MZ1}, for details.
The asserted estimates then follow in the limit by continuity.

The rest of this subsection is devoted to establishing the asserted
A Priori estimates.

\subsubsection{Internal and high frequency estimates}
 \label{energy}

  Let $s \in \NN,$ and some background profile $\un U \in H^s.$
 We consider equation \eqref{linpert}, and its differentiated form: 
\begin{equation}\label{apriorieq}
(AU'- dQ+b)U= (h_1',h_2),
\end{equation}
in which $b U := (b_1 U, b_2 U),$ where $b_2$ is defined in Section \ref{pertsec}, and $b_1$ is defined similarly, by differentiating the coefficients $A_{11},$ $A_{12}$ in the first line of \eqref{linpert}. The coefficients $A,$ $b,$ and $dQ$
are smooth functions of $U_{CE}+\un U.$ The bound for $U_{CE},$ \eqref{bd-CE}, and the assumed bound for $\un U$ imply the coefficient bounds 
\be \label{coeffests} \left\{ \begin{aligned}
|\partial_x^{j+1} C|_{L^\infty} + |\partial_x^j b|_{L^\infty} & \le c_j \eps^{2+j}, & 0 \leq j \leq s-1,\\
\|\partial_x^{k+1} C\|_{L^2} + \|\partial_x^{k} b\|_{L^2}
& \le C_k \e^{1/2 + k} (\e + |\un U|_{\e,0,s+1}), & 0 \le k \le s,\end{aligned} \right.\ee
where $C = A, Q, K,$ the matrix $K$ being the Kawashima multiplier introduced in Section \ref{assrelax}. In \eqref{coeffests}, the constants $c_j$ depend on $| \d_x^{j'} (U_{CE} + \un U)|_{L^\infty},$ for $0 \leq j' \leq j,$ while, by the classical Moser's inequality, the constants $C_k$ depend on $|U_{CE} + \un U|_{L^\infty}.$

We give in the following Proposition an estimate for the internal variables $U' = (u', v')$ and $v$. 

\begin{prop}\label{estHs} For 
$k \ge 1$, for come $C > 0,$ for $\e$ and $\delta$ small enough, given $ h \in F_{k+1},$ if $U \in H^k$ satisfies \eqref{apriorieq} with  $|\un U|_{E_2}\le \eps$, there holds
\begin{equation}\label{sharph2eq}
\begin{aligned}
  | \D_x^k U' |_{E_0} +   | \partial^k_x v |_{E_0}  
 &  \le  C  \big(
  | \partial_x^k H |_{E_0}  +   \eps^k  \big( 
 | U' |_{E_{k-1}} + \eps   | v |_{E_{k-1}}  +
  \eps | u |_{E_0}\big)\big)  \\
&\quad +
C \eps^{k+1} |\un U|_{E_{k+2}}
(  |v |_{E_1} +
\eps|U |_{E_2}),
\end{aligned}
\end{equation}
where $H = (h_1, h'_1, h''_1, h_2, h'_2).$
\end{prop}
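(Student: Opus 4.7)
\medskip

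\textbf{Plan of proof.} The plan is a Kawashima-type weighted energy estimate at order $k$, applied to the differentiated equation \eqref{apriorieq}. First, I apply $\partial_x^k$ to \eqref{apriorieq} to obtain
$$
A\partial_x^k U' - dQ\,\partial_x^k U + b\,\partial_x^k U = \partial_x^k(h'_1,h_2) + [\text{commutators}],
$$
where each commutator involves $\partial_x^j$ of a coefficient ($A$, $dQ$, or $b$) hitting $\partial_x^{k-j}U$ or $\partial_x^{k-j}U'$, with $j\ge 1$. Using \eqref{coeffests}, the low-order commutators (those in which at most $k-1$ derivatives fall on the coefficient) produce the schematic contribution $\eps^{k}(|U'|_{E_{k-1}}+\eps|v|_{E_{k-1}}+\eps|u|_{E_0})$ after invoking Moser's inequality and the Sobolev embedding of Remark \ref{rem:Moser-app2}, together with $|\un U|_{E_2}\le\eps$. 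The single extremal commutator, where all $k+1$ derivatives fall on a coefficient $C\in\{A,b,dQ\}$, is estimated in $L^2$ by $\|\partial_x^{k+1}C\|_{L^2}|U|_{L^\infty}+\|\partial_x^kC\|_{L^2}|U'|_{L^\infty}$, which by \eqref{coeffests} and Remark \ref{rem:Moser-app2} yields the last line of \eqref{sharph2eq} involving $|\un U|_{E_{k+2}}$.

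Next, I run a two-tier energy argument on the principal part against the weight $w_\delta:=e^{2\delta\eps\langle x\rangle}$. At the $L^2$ level (against $w_\delta\,\partial_x^kU$), symmetry of $A$ lets me integrate by parts to turn $\langle A\partial_x^kU',\partial_x^kU\rangle_{w_\delta}$ into $-\tfrac12\langle(\partial_xA+2\delta\eps\langle x\rangle'A)\partial_x^kU,\partial_x^kU\rangle_{w_\delta}$, which by \eqref{coeffests} and smallness of $\delta$ is absorbable. The dissipativity of $Q_{22}$ in assumption (iii) then yields control of $|\partial_x^kv|_{E_0}$, i.e.\ dissipation in the $v$-component only. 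To upgrade to dissipation in $U'$, I add to the energy a Kawashima correction of the form $\mu\,\langle K\partial_x^{k-1}U',\partial_x^kU\rangle_{w_\delta}$ with small $\mu>0$ and $K$ the skew matrix introduced in Section \ref{assrelax}. Substituting the principal equation for $\partial_x^{k-1}U'$ in this bracket and integrating by parts, the leading contribution becomes $\langle(KA-Q)\partial_x^kU',\partial_x^kU'\rangle_{w_\delta}$, which by the Kawashima-type bound \eqref{kaw} controls $|\partial_x^kU'|_{E_0}^2$. The remaining cross-terms carry factors of $\partial_xA$, $b$, $\partial_x K$ of order $\eps^2$ (by \eqref{coeffests}), and are absorbed into the dissipation by choosing $\mu$ and $\delta$ small.

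Combining the two tiers gives $|\partial_x^kU'|_{E_0}^2+|\partial_x^kv|_{E_0}^2$ controlled by the $L^2$-norm of $w_\delta^{1/2}\partial_x^kH$ — which is $|\partial_x^kH|_{E_0}^2$ up to the $\eps^{1/2}$ normalization in \eqref{defnorm} — plus the commutator contributions listed above, yielding \eqref{sharph2eq} after taking square roots. The forcing enters as $H=(h_1,h_1',h_1'',h_2,h_2')$ because the first block of \eqref{apriorieq} is already differentiated (hence $h_1'$), and one more $x$-derivative is needed when this block is substituted into the Kawashima correction (hence $h_1''$); similarly the second block contributes $h_2$ and, through the correction, $h_2'$.

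\textbf{Expected main obstacle.} The delicate point is bookkeeping of $\eps$-powers rather than the structural estimate itself. The weighted norms \eqref{defnorm} weight the $j$-th derivative by $\eps^{1/2-j}$, so each commutator must be seen to produce \emph{exactly} $k$ extra powers of $\eps$ (the $\eps^k$ in \eqref{sharph2eq}), coming from the $\eps^{2+j}$ bounds on $\partial_x^{j+1}$ of coefficients in \eqref{coeffests} combined with Sobolev losses of $\eps^{-1/2}$. Handling the extremal commutator without requiring higher regularity of $\un U$ than $|\un U|_{E_{k+2}}$ — in particular not $|\un U|_{E_{k+3}}$ — requires putting the top-derivative factor in $L^2$ on the coefficient and the undifferentiated factor in $L^\infty$ on $U$, and then using the Sobolev embedding of Remark \ref{rem:Moser-app2} which costs exactly one $\eps^{-1/2}$; these powers reorganize into the stated $\eps^{k+1}|\un U|_{E_{k+2}}(|v|_{E_1}+\eps|U|_{E_2})$ term.
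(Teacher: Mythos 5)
Your overall plan — apply $\partial_x^k$ to \eqref{apriorieq}, run a Kawashima-type weighted energy estimate using symmetry of $A$, dissipativity of $Q_{22}$, and the Kawashima bound \eqref{kaw}, and handle commutators via \eqref{coeffests}, Moser's inequality, and the Sobolev embedding of Remark \ref{rem:Moser-app2} — is the right one and does match the paper in spirit. The paper organizes it slightly differently: it first establishes an unweighted $L^2$ bound for $U'$ and $v$ (Lemma \ref{lem62}) by pairing the equation with the symmetrizer $\mathfrak{S}=\partial_x^2+\partial_x\circ K-\lambda$, then applies that lemma to $\eps^{1/2}e^{\delta\eps\langle x\rangle}\partial_x^kU$, absorbing the weight and the $k$ differentiations into modified source and commutator terms; you instead perform the weighted estimate directly at derivative level $k$. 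These are essentially the same computation.

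There is, however, a concrete misstatement in your Kawashima tier. You propose adding the correction $\mu\langle K\partial_x^{k-1}U',\partial_x^kU\rangle_{w_\delta}$; but $\partial_x^{k-1}U'=\partial_x^kU$, so this quantity equals $\mu\langle K\partial_x^kU,\partial_x^kU\rangle_{w_\delta}$, which vanishes identically because $K$ is skew-symmetric. And your suggested fix — ``substituting the principal equation for $\partial_x^{k-1}U'$ in this bracket'' — would require solving for $\partial_x^{k-1}U'$, i.e.\ inverting $A$, which is precisely unavailable here: the whole point of this application is that $A$ is \emph{not} uniformly invertible (characteristic velocity). The correct move, which the $\partial_x\circ K$ piece of the paper's symmetrizer $\mathfrak{S}$ implements, is to pair $K$ applied to the (differentiated) equation against $\partial_x^kU'$ — equivalently, pair $\partial_x\circ K$ applied to the equation against $\partial_x^kU$. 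That way the term $A\partial_x^kU'$ appearing \emph{in the equation} contributes $\langle KA\,\partial_x^kU',\partial_x^kU'\rangle_{w_\delta}$ directly, with no inversion of $A$. With that replacement, and combining with the $-\lambda$ piece that produces $-\langle Q\,\partial_x^kU',\partial_x^kU'\rangle_{w_\delta}$ so that \eqref{kaw} can be invoked for the full $KA-Q$, the rest of your bookkeeping of $\eps$-powers and the extremal commutator is consistent with the target estimate \eqref{sharph2eq}.
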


In order to prove Proposition \ref{estHs}, we start with an $L^2$ estimate for the internal variables:

\begin{lem}
\label{lem62}
For some $C > 0,$ for $\eps$ sufficiently small, 
given $(h_1, h_2) \in H^2 \times H^1,$ if $U \in H^1$ satisfies \eqref{apriorieq} with $\|\un U\|_{\e,0,2}\le \eps$,
there holds
\begin{equation}\label{sharph1eq}
\| U'  \|_{L^2} + \| v \|_{L^2}  \leq C    
\big(\|  h_1 \|_{H^2} + \| h_2 \|_{H^1}  
+ \e \| u \|_{L^2}\big). 
\end{equation}
\end{lem}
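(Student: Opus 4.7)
The plan is an energy method combining two $L^2$ estimates on \eqref{apriorieq}: a standard one testing against $U$, and a Kawashima compensator estimate testing against $-KU'$, with $K$ the skew-symmetric matrix from \eqref{kaw}.

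For the standard estimate, symmetry of $A$ (assumption (ii)) turns $(AU',U)$ into the commutator $-\tfrac12(A'U,U) = O(\eps^2)\|U\|_{L^2}^2$ via \eqref{coeffests}; the block structure of $dQ$ together with $\Re e\,Q_{22}\le-\theta\Id$ yields the dissipation $-(dQU,U)\ge \theta\|v\|^2 - C|\partial_u q|_{L^\infty}\|u\|\|v\|$. Since $v_*\equiv 0$ by (iii) implies $\partial_u q(u,0)=0$, one has $|\partial_u q|_{L^\infty}\lesssim|v_{CE}+\un v|_{L^\infty}\lesssim \eps$ using \eqref{bd-CE} and the one-dimensional Gagliardo--Nirenberg bound $|\un U|_{L^\infty}\le C\|\un U\|_{L^2}^{1/2}\|\un U'\|_{L^2}^{1/2}$ applied under the hypothesis $\|\un U\|_{\eps,0,2}\le\eps$. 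Integrating by parts $(h_1',u)=-(h_1,u')$ and applying Young's inequality then gives
\begin{equation*}
\|v\|_{L^2}^2\;\lesssim\;\|h_1\|_{L^2}\|u'\|_{L^2}+\|h_2\|_{L^2}^2+\eps^2\|u\|_{L^2}^2.
\end{equation*}

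For the Kawashima step, $-(AU',KU')=\int(U')^T\Re e(KA)\,U'$ (using $\mathrm{sym}(AK)=-\Re e(KA)$), and by the coupling condition $\Re e(KA-Q)\ge c\Id$ this is bounded below by $c\|U'\|_{L^2}^2-\theta\|v'\|_{L^2}^2$. Rewriting $-(AU',KU')=((b-dQ)U,KU')-(h,KU')$ via the equation, the $h$-contribution $-(h_1',A_{12}v')+(h_2,A_{21}u')$, after the integration by parts $(h_1',A_{12}v')=-(h_1'',A_{12}v)-(h_1',A_{12}'v)$, is bounded by $\|h_1\|_{H^2}\|v\|+\|h_2\|\|u'\|$; this is where the $H^2$ regularity on $h_1$ enters. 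The remaining cross-terms from $((b-dQ)U,KU')$ are controlled by $(\eps\|u\|+\|v\|)\|u'\|+\eps^2\|U\|\|U'\|$ via \eqref{coeffests} and $|\partial_u q|_{L^\infty}\lesssim\eps$, and Young's inequality absorbs all $\|U'\|$ factors into the left-hand side.

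Combining the two estimates with suitable weights, and using the second equation of \eqref{linpert} to trade the residual $\|v'\|^2$ for $\|u'\|^2+\|v\|^2+\|h_2\|^2+\eps^4\|U\|^2$ (by solving the relaxation equation for $\partial_v q\,v$), closes the argument and yields
\begin{equation*}
\|U'\|_{L^2}^2+\|v\|_{L^2}^2\;\lesssim\;\|h_1\|_{H^2}^2+\|h_2\|_{H^1}^2+\eps^2\|u\|_{L^2}^2,
\end{equation*}
from which \eqref{sharph1eq} follows. The main obstacle is the Kawashima step: the $\|h_1\|_{H^2}$ norm appears only after integration by parts on $(h_1',A_{12}v')$ (this is what forces the extra derivative on $h_1$ in the statement), and the residual $\|v'\|^2$ from the Kawashima lower bound must be eliminated by exploiting the relaxation structure of \eqref{linpert}(2), for which the $O(\eps^2)$ smallness of the background derivatives from \eqref{coeffests} is essential to absorb the variable-coefficient error terms.
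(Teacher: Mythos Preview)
Your overall strategy---a standard dissipative estimate (testing against $U$) combined with a Kawashima compensator estimate (testing against $-KU'$)---is on the right track and corresponds to the $-\lambda$ and $\partial_x\circ K$ pieces of the paper's symmetrizer $\mathfrak S=\partial_x^2+\partial_x\circ K-\lambda$. But the third step, where you claim to ``trade the residual $\|v'\|^2$'' using the second equation of \eqref{linpert}, does not work as stated, and this is a genuine gap.

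Solving the second equation for $\partial_v q\,v$ gives
\[
\partial_v q\,v=A_{21}u'+A_{22}v'+b_2U-h_2,
\]
which controls $\|v\|$, not $\|v'\|$. To extract $\|v'\|$ from this relation you would need $A_{22}$ invertible with bounded inverse, and that is exactly what is \emph{not} assumed here: the paper is aimed at the characteristic case $\det A=0$, where $A_{22}$ may fail to be invertible (think of discrete-velocity or moment-closure models with particle speeds near zero). Even if $A_{22}$ were invertible, the resulting bound $\|v'\|^2\le C(\|u'\|^2+\dots)$ would feed back into the Kawashima inequality with a constant $C$ that has no reason to be small compared to $c$, so the loop would not close.

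What is missing is precisely the $\partial_x^2$ term of the paper's symmetrizer. Testing the equation against $U''$ (equivalently, differentiating \eqref{apriorieq} and testing against $U'$) produces the term $-(dQ\,U,U'')=(dQ\,U',U')+O(\eps^2)$, whose symmetric part is $\int (v')^{T}\Re e\,Q_{22}\,v'\le -\theta'\|v'\|^2$. The point is that this contribution combines with your Kawashima term to give
\[
-(AU',KU')-(dQ\,U,U'')\;\approx\;-\int (U')^{T}\Re e(KA-dQ)\,U'\;\le\;-c\|U'\|^2,
\]
by \eqref{kaw}, with \emph{no} $\|v'\|^2$ residual at all. This is why the paper packages all three pieces into the single symmetrizer $\mathfrak S=\partial_x^2+\partial_x\circ K-\lambda$: the $\partial_x^2$ and $\partial_x\circ K$ contributions together hit $\Re e(KA-Q)$ directly, and the $-\lambda$ piece supplies the $\|v\|^2$ dissipation you already obtained in your first estimate. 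Add this third estimate (or adopt the symmetrizer formulation) and your argument closes.
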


\begin{proof}[Sketch of proof] The key is to bound the the $L^2$ scalar product $({\mathfrak S} h, U)_{L^2}$ from above and from below, where
 ${\mathfrak S}$ is the symmetrizer
$
{\mathfrak S} = \d_x^2    + \d_x \circ K   -  \lambda,$
for an appropriate choice of $\lambda \in \RR,$ using symmetry of $A,$ and positivity of $K A - Q$ \eqref{kaw}. A complete proof in given in Section 5.4.1, \cite{MTZ}.
\end{proof}

\begin{proof}[Proof of Proposition~\ref{estHs}] We use Lemma \ref{lem62} for $\e^{1/2} e^{\delta \e \langle x \rangle} U,$ which solves \eqref{apriorieq} with the source term
 $$ \e^{1/2} e^{\delta \e \langle x \rangle} \big( (h'_1,h_2) +  \delta \e \langle x \rangle' \tilde A U\big).$$
 This gives
   \begin{equation}\label{invbd}
 | U' |_{E_0}   + | v |_{E_0}  \le 
C  \big( | H |_{E_0} +  \e  | u |_{E_0} \big),
\end{equation}
i.e., estimate \eqref{sharph2eq} with $k = 0.$ Estimate \eqref{sharph2eq} with $k > 0$ is obtained in a simialar way, differentiating \eqref{apriorieq} $k$ times. For more details, see Proposition 5.5, \cite{MTZ}.
\end{proof}

\subsubsection{Linearized Chapman--Enskog estimate} \label{linCEestimates}

 It remains only to estimate the weighted $L^2$ norm $|u|_{E_0}$ in order to close the estimates
and establish the bound claimed in Proposition \eqref{invprop}.
To this end, we work with the first equation  in 
\eqref{linpert}
and  estimate it by comparison with the Chapman-Enskog 
approximation of Section~\ref{CEapprox}. 
 From the second equation in \eqref{linpert}, in which, by \eqref{coeffests}, $b = O(\e^2),$   
we find, for small $\e,$ 
\begin{equation}
\label{T2s7}
v= \left(\partial_v q - b_{22}\right)^{-1}  
\Big(A_{21} u' + A_{22} v' + b_{21} u - h_2 \Big),
\end{equation}
where $b_2 U =: b_{21} u + b_{22} v.$ 
Introducing now \eqref{T2s7} in the first equation of \eqref{linpert}, we obtain the linearized profile equation
\begin{equation} \label{intpertu}
 A_{12} (\d_v q - b_{22})^{-1} A_{21} u' + \big(A_{11} + A_{12} (\d_v q - b_{22})^{-1} b_{21} \big) u = h^\sharp,
 \end{equation}
where $h^\sharp$ depends on the source $h$ and on $v',$ but not on $v$ nor on $u:$
 $$ h^\sharp := - A_{12} (\d_v q - b_{22})^{-1} A_{22} v' + h_1 + A_{12} (\d_v q - b_{22})^{-1} h_2.$$
 Introduce the notation
  $$ \begin{aligned} b^\sharp & := \left(A_{12} (\d_v q - b_{22})^{-1} A_{21}\right)(U_{CE} + \cdot), \\ f^\sharp & := \left(A_{11} + A_{12} (\d_v q - b_{22})^{-1} b_{21}\right)(U_{CE} + \cdot).\end{aligned}$$
  Then \eqref{intpertu} takes the form
  \begin{equation} \label{intpertu2}
   (b^\sharp \d_x - f^\sharp)(\un U) u = - h^\sharp.
  \end{equation}
  We estimate the solution of \eqref{intpertu2} by the following:

\begin{prop}\label{uprop}
Given $\un U \in H^4,$ with $| \un U|_{E_4}\le \eps$, if $\e$ is sufficiently small,  then
the operator $( b^\sharp \partial_x - f^\sharp)(\un U)$ has
a right inverse $( b^\sharp \partial_x - f^\sharp)(\un U)^{\dagger},$ satisfying the bound  
\begin{equation}\label{rightinv}
\|( b^\sharp \partial_x - f^\sharp)(\un U)^{\dagger}h\|_{E_0} \le 
C\eps^{-1}\|h\|_{E_0},
\end{equation}
and uniquely specified by the property that the solution 
$u$ to \eqref{intpertu2}  satisfies 
\begin{equation}\label{phase}  
\ell_\eps  \cdot u(0) =0. 
\end{equation}
for certain unit vector $\ell_\eps$. 
\end{prop}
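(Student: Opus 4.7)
Since $u$ is scalar by assumption~(i), \eqref{intpertu2} is a first-order linear scalar ODE, and the plan is to solve it explicitly via an integrating factor. Define
\[
\mu(x):=\exp\!\Bigl(\int_0^x \frac{f^\sharp(y)}{b^\sharp(y)}\,dy\Bigr),
\]
so that the one-dimensional kernel of $b^\sharp\partial_x-f^\sharp$ is $\R\mu$. When $\un U\equiv 0$, the ratio $f^\sharp/b^\sharp$ reduces at leading order to $\partial_u f_*(u_{CE})/b_*(u_{CE})$; by \eqref{goodb}, genuine nonlinearity~(v), and the Lax shock property $df_*(u_-)>0>df_*(u_+)$ from Proposition~\ref{NSprofbds}, this function is bounded and admits limits at $\pm\infty$ of opposite signs, each of size $O(\eps)$ by \eqref{NSbds}. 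A direct estimate then yields $\mu(x)\lesssim e^{-\theta'\eps|x|}$ for some $\theta'>0$ independent of $\eps$; in fact $\mu$ coincides at leading order with $u'_{CE}/u'_{CE}(0)$, consistently with translation invariance.

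Variation of constants gives the general solution
\[
u(x)=\mu(x)\Bigl(\alpha-\int_0^x \frac{h^\sharp(y)}{b^\sharp(y)\,\mu(y)}\,dy\Bigr),\qquad \alpha\in\R,
\]
and I would choose the unit scalar $\ell_\eps$ with $\ell_\eps\mu(0)\neq 0$ (in the scalar case simply $\ell_\eps=1$, so that the phase condition reads $u(0)=0$), forcing $\alpha=0$. This uniquely determines $(b^\sharp\partial_x-f^\sharp)^\dagger h^\sharp$ and confirms that $\R\mu$ is exactly the kernel.

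To obtain \eqref{rightinv}, I would rescale to $\tilde x=\eps x$. Then $\tilde\mu(\tilde x):=\mu(\tilde x/\eps)$ is uniformly bounded with bounded derivatives and satisfies $\tilde\mu(\tilde x)\lesssim e^{-\theta'|\tilde x|}$, while the weighted norm $\|\cdot\|_{\eps,\delta,0}$ corresponds, up to the overall factor $\eps^{1/2}$ inherent in \eqref{scaling}, to the norm $\|e^{\delta|\tilde x|}\,\cdot\,\|_{L^2(d\tilde x)}$ on the rescaled function. Choosing $\delta<\theta'$, the Volterra operator $\tilde h\mapsto \tilde\mu(\tilde x)\int_0^{\tilde x}(\tilde b^\sharp\tilde\mu)^{-1}\tilde h(\tilde y)\,d\tilde y$ is bounded on $e^{\delta|\tilde x|}L^2(d\tilde x)$ uniformly in $\eps$ by Schur's test (equivalently by Young's inequality, since the relevant kernel decays like $e^{-(\theta'-\delta)|\tilde x-\tilde y|}$ on the half-lines $\tilde x\tilde y>0$). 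Unwinding the rescaling produces the Jacobian factor $\eps^{-1}$, accounting precisely for the loss in \eqref{rightinv}.

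For the perturbative case $|\un U|_{E_4}\le\eps$, the weighted Sobolev embedding of Remark~\ref{rem:Moser-app2}, applied to $e^{\delta\eps\langle x\rangle}\un U$, yields a pointwise bound of the form $|\un U(x)|+\eps^{-1}|\partial_x\un U(x)|\lesssim \eps^{1/2}e^{-\delta\eps|x|}$. Consequently $b^\sharp(U_{CE}+\un U)$ and $f^\sharp(U_{CE}+\un U)$ differ from their $\un U=0$ counterparts by $O(\eps^{1/2}e^{-\delta\eps|x|})$, which vanishes at $\pm\infty$ and therefore preserves the asymptotic signs of $f^\sharp/b^\sharp$ that drive the exponential decay of $\mu$; the construction above then goes through with only quantitative modifications of constants, giving $\ell_\eps$ depending mildly on $\un U$. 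I expect the main obstacle to be the weighted $L^2$ bound producing the sharp $\eps^{-1}$ constant, which rests crucially on Lax's sign condition ensuring decay of $\mu$ at both ends and on the compatibility $\delta<\theta'$ between the exponential weight and the decay rate of $\mu$.
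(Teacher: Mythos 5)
Your argument is correct and follows essentially the same route as the paper: rescale to $\tilde x = \eps x$, use the Lax sign condition $df_*(u_-)>0>df_*(u_+)$ to get exponential decay of the homogeneous solution at both ends, impose $u(0)=0$ to fix the free constant, and read off the $\eps^{-1}$ from the Jacobian of the rescaling. What you add over the paper's two-sentence sketch is the explicit integrating-factor formula $u=\mu\bigl(\alpha-\int_0^x h^\sharp/(b^\sharp\mu)\bigr)$, the identification $\mu\sim u_{CE}'/u_{CE}'(0)$, and the Schur-test verification of the weighted-$L^2$ bound for the Volterra kernel under $\delta<\theta'$, all of which the paper leaves implicit as ``standard boundary-value ODE theory''; your treatment of the $\un U$-perturbation via the weighted Sobolev embedding is also a fine way to make precise the paper's appeal to exponentially convergent coefficients.
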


\begin{proof}
Working in $\tilde x=\eps x$ coordinates, and noting that
$\eps^{-1} |f^\sharp(0)- f^\sharp(U_\pm)|\sim e^{-\theta|\tilde x|}$,
by \eqref{NSbds}, we obtain using $\partial_{x}=\eps \partial_{\tilde x}$
the equation 
\be\label{modeq}
(b^\sharp\partial_{\tilde x} -\eps^{-1}f^\sharp)u= \eps^{-1} h,
\qquad u(0)=0.
\ee
This is a rather standard boundary-value
ODE problem with exponentially convergent coefficients at spatial infinity.
Using the extra condition $u(0)=0$, we may break it into a pair of
boundary values problems on $(-\infty,0]$ and $[0,+\infty)$, each
of which, by the Lax condition $df_*(u_-)>0>df_*(u_+)$, implying
that there is a one-dimensional manifold of decaying solutions as
$\tilde x\to -\infty$ or as $\tilde x\to +\infty$, is well-posed,
from $H^s_{\eps,\delta}$ to itself, so long as $\delta$ is strictly
smaller that $\eps^{-1}\min |df_*(u_\pm)|$.
Taking account of the $\eps^{-1}$ factor in the righthand side of
\eqref{modeq}, we obtain the result.
\end{proof}

 Combining Proposition \ref{estHs} with $k=1$ and Proposition \ref{uprop}, we obtain:
 \begin{prop}
 \label{prop72}
For some $C> 0,$ for $\e$ and $\delta$ small enough, given $h \in F_2,$ and $\un U \in H^4$ satisfying $| \un U|_{E_4} \le \e,$ if $U = (u,v) \in H^2$ satisfies \eqref{linpert}, with $u$ satisfying \eqref{phase}, there holds  
  \begin{equation}
  \label{invbdH2s7}
| U |_{E_2} \leq C \eps^{-1} |h|_{F_2}.
\end{equation}
\end{prop}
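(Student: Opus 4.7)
The plan is to bootstrap the Chapman--Enskog estimate of Proposition~\ref{uprop} for the slow variable $u$ against the internal-variable estimates of Proposition~\ref{estHs}, exploiting smallness of $\e$ to close the loop. First, I would recall from Section~\ref{linCEestimates} that the second equation of \eqref{linpert} allows us to eliminate $v$ in terms of $u,u',v',h_2$ (since $\d_v q - b_{22}$ is invertible for small $\e$), leading to the reduced ODE $(b^\sharp\partial_x - f^\sharp)(\un U)\,u = -h^\sharp$, where $h^\sharp$ is a linear combination of $h_1$, $h_2$ and $v'$ with smooth, uniformly bounded coefficients. Since $u$ satisfies the phase condition \eqref{phase}, Proposition~\ref{uprop} applies and gives
$$|u|_{E_0} \le C\e^{-1}|h^\sharp|_{E_0} \le C\e^{-1}\bigl(|v'|_{E_0}+|h|_{F_2}\bigr).$$

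Next, I would apply Proposition~\ref{estHs} at two levels. The $k=0$ case (identity \eqref{invbd}, extracted from the proof of \ref{estHs}) yields $|U'|_{E_0}+|v|_{E_0} \le C(|H|_{E_0}+\e|u|_{E_0})$, while the $k=1$ case, after using $|\un U|_{E_3}\le|\un U|_{E_4}\le\e$ to swallow the background-dependent remainder, gives a bound of the form
$$|U''|_{E_0} + |v'|_{E_0} \le C|\partial_x H|_{E_0} + C\e|U'|_{E_0} + C\e^2\bigl(|u|_{E_0}+|v|_{E_0}\bigr) + C\e^3|U|_{E_2}.$$
The crucial point is the favorable $\e$-weight on $|u|_{E_0}$ in the estHs bound for $|v'|_{E_0}$: when this is fed back into the uprop estimate, the $\e^{-1}$ loss from Proposition~\ref{uprop} is exactly compensated, producing $|u|_{E_0}\le C|h|_{F_2}+C\e^2|U|_{E_2}$ (after also noting $|\partial_x H|_{E_0}\le\e|H|_{E_1}$ and $|H|_{E_1}\lesssim|h|_{F_2}$).

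Propagating this back into the $k=0$ and $k=1$ estimates yields bounds of the shape $|U'|_{E_0}+|v|_{E_0}\le C|h|_{F_2}+C\e^3|U|_{E_2}$ and $|U''|_{E_0}+|v'|_{E_0}\le C\e|h|_{F_2}+C\e^3|U|_{E_2}$; assembling $|U|_{E_2}=|U|_{E_0}+\e^{-1}|U'|_{E_0}+\e^{-2}|U''|_{E_0}$ gives
$$|U|_{E_2} \le C\e^{-1}|h|_{F_2}+C\e|U|_{E_2},$$
and the last term is absorbed for $\e$ small, producing \eqref{invbdH2s7}. The main obstacle is the bookkeeping of the weighted $\|\cdot\|_{\e,\delta,s}$ norms through the bootstrap: any miscount that leaves a coefficient $C|U|_{E_2}$ (rather than $C\e|U|_{E_2}$) on the right-hand side would wreck the closure. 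What makes the argument work is the delicate cancellation between the $\e^{-1}$ loss inherent to the Chapman--Enskog inversion (Proposition~\ref{uprop}) and the $\e$-gains attached to $|u|_{E_0}$ in the symmetrizer-based estimates of Proposition~\ref{estHs}, which ultimately reflect the Kawashima dissipation \eqref{kaw}.
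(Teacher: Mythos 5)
Your proposal is essentially the proof the paper intends: the paper states this proposition with the one-line justification ``Combining Proposition~\ref{estHs} with $k=1$ and Proposition~\ref{uprop}, we obtain\dots,'' and you have spelled out exactly that bootstrap: the Chapman--Enskog right-inverse bound for $u$ (Proposition~\ref{uprop}), fed back against the $k=0$ and $k=1$ internal estimates (Proposition~\ref{estHs} and its $L^2$ version \eqref{invbd}), with the absorption closed by smallness of $\e$. The decomposition $|U|_{E_2}=|U|_{E_0}+\e^{-1}|U'|_{E_0}+\e^{-2}|U''|_{E_0}$, the bounds $|\d_x H|_{E_0}\le \e|H|_{E_1}\lesssim \e|h|_{F_2}$, and the use of $|\un U|_{E_3}\le |\un U|_{E_4}\le\e$ to tame the background term are all correct.

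One small inaccuracy: your intermediate claim ``$|u|_{E_0}\le C|h|_{F_2}+C\e^2|U|_{E_2}$'' drops the required factor $\e^{-1}$ on $|h|_{F_2}$. The $\e^{-1}$ loss from Proposition~\ref{uprop} is compensated only on the $v'$ contribution to $h^\sharp$; the terms $h_1$ and $A_{12}(\d_v q-b_{22})^{-1}h_2$ appear in $h^\sharp$ directly, so the correct bound is $|u|_{E_0}\le C\e^{-1}|h|_{F_2}+C\e^2|U|_{E_2}$. This does not affect the conclusion, since in the final assembly the $\e^{-1}$ is produced in any case by the weights $\e^{-1}|U'|_{E_0}$ and $\e^{-2}|U''|_{E_0}$ and the desired estimate \eqref{invbdH2s7} has an $\e^{-1}$ anyway; but the sentence claiming the loss is ``exactly compensated'' should be corrected.
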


Knowing a bound  for $\| u \|_{L^2_{\eps, \delta}}$, Proposition~\ref{estHs} implies by induction the following final result.

\begin{prop}\label{prop73} For $s \geq 3,$ for some $C >0,$ for $\e$ and $\delta$ small enough, 
given $h \in F_s$ and $\un U \in H^{s+1}$ with $|\un U|_{E_4} \leq \e,$ if $U \in H^s$ satisfies  
  \eqref{linpert} and \eqref{phase},
then
 \begin{equation}\label{invbdHs7}
 |U|_{E_s} \leq
 \eps^{-1} C\big( 
 | \un U |_{E_{s+1}}
| h |_{F_2}
+
| h |_{F_s}\big)
\end{equation}

\end{prop}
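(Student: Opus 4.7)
The plan is to proceed by induction on $s\ge 2$, using Proposition~\ref{prop72} as the base case and Proposition~\ref{estHs} as the engine of the induction step. The key structural observation is that estimate \eqref{sharph2eq} already has the tame form built in: the high norm $|\un U|_{E_{k+2}}$ multiplies only the low norms $|v|_{E_1}$ and $|U|_{E_2}$, which by Proposition~\ref{prop72} are in turn controlled by $|h|_{F_2}$ alone.

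For the base case $s=2$, Proposition~\ref{prop72} gives directly $|U|_{E_2}\le C\eps^{-1}|h|_{F_2}$, which is a fortiori \eqref{invbdHs7} at $s=2$. For the inductive step, I would write $|U|_{E_s}=|U|_{E_{s-1}}+\eps^{-s}|\d_x^s U|_{E_0}$ and identify $\d_x^s U=\d_x^{s-1}U'$ with one of the quantities estimated in \eqref{sharph2eq} at $k=s-1$. The right-hand side of \eqref{sharph2eq} then splits naturally into three contributions, each to be renormalized by $\eps^{-s}$.

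The source contribution $|\d_x^{s-1}H|_{E_0}$, with $H=(h_1,h_1',h_1'',h_2,h_2')$, is controlled via the semiclassical scaling in \eqref{defnorm} by $\eps^{s-1}|h|_{F_s}$, yielding $C\eps^{-1}|h|_{F_s}$ after multiplication by $\eps^{-s}$. The low-order block $\eps^{s-1}(|U'|_{E_{s-2}}+\eps|v|_{E_{s-2}}+\eps|u|_{E_0})$ is bounded by $\eps^{s}|U|_{E_{s-1}}$ (the extra power of $\eps$ compensates for the derivative in $U'$), so after renormalization it contributes $C|U|_{E_{s-1}}$, to which the induction hypothesis applies. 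The tame block $\eps^{s}|\un U|_{E_{s+1}}(|v|_{E_1}+\eps|U|_{E_2})$ is treated by using Proposition~\ref{prop72} to replace $|v|_{E_1}+\eps|U|_{E_2}$ by $C\eps^{-1}|h|_{F_2}$, giving $C\eps^{-1}|\un U|_{E_{s+1}}|h|_{F_2}$. Summing these three contributions yields
$$
|U|_{E_s}\le C|U|_{E_{s-1}}+C\eps^{-1}|h|_{F_s}+C\eps^{-1}|\un U|_{E_{s+1}}|h|_{F_2},
$$
and inserting the inductive bound for $|U|_{E_{s-1}}$ together with the monotonicity $|\un U|_{E_s}\le|\un U|_{E_{s+1}}$, $|h|_{F_{s-1}}\le|h|_{F_s}$ closes the induction.

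The main obstacle I would expect is the careful bookkeeping of $\eps$-powers coming from the semiclassical weighting in \eqref{defnorm}, in particular the fact that $|\d_x^k f|_{E_0}$ and $|f|_{E_k}$ differ by exactly a factor $\eps^k$. The tame structure of \eqref{invbdHs7} survives at each step precisely because the $\eps^{k+1}$ prefactor of $|\un U|_{E_{k+2}}$ in \eqref{sharph2eq} exceeds the $\eps^{-s}$ renormalization by exactly one power of $\eps$, matching the single $\eps^{-1}$ loss allowed on the right-hand side of \eqref{invbdHs7}; if the $\eps^{k+1}$ were replaced by $\eps^{k}$, the scheme would accumulate a factor $\eps^{-1}$ per induction step and collapse.
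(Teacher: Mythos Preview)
Your proposal is correct and follows exactly the approach the paper indicates: the paper's proof consists of the single sentence ``Knowing a bound for $\|u\|_{L^2_{\eps,\delta}}$, Proposition~\ref{estHs} implies by induction the following final result,'' and your write-up is precisely a careful unpacking of that induction, with Proposition~\ref{prop72} supplying both the base case and the control of the low norms multiplying $|\un U|_{E_{s+1}}$. Your bookkeeping of the $\eps$-powers is accurate, and your closing remark about the role of the $\eps^{k+1}$ prefactor in \eqref{sharph2eq} correctly identifies why the tame structure survives the induction.
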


Propositon \ref{prop73} can be used to establish Proposition \ref{invprop}
by a vanishing viscosity argument; see \cite{MZ1}.

\begin{subsection}{Why Nash--Moser?}
We conclude by discussing
why we seem to need Nash--Moser to close the argument.
Recall the standard proof of existence for 
quasilinear symmetric hypertolic systems $u_t + A(u)u_x=S$
using energy estimates.  One writes an iteration scheme
$$
u^{n+1}_t+ A(u^n)u^{n+1}_x=S,
$$
which gives $H^s$ bounds 
$|u^{n+1}|_{H^s}\le C|g|_{H^s}$ so long as $|u^{n}|_{H^s}$
is small, and contraction in lower norms on small time intervals,
giving the result.

But, it is easily checked that this does not work for equations
in conservative form
$u_t + (A(u)u)_x=S$, for which
$$
u^{n+1}_t + (A(u^{n})u^{n+1})_x=S,
$$
gives $H^s$ bounds 
$|u^{n+1}|_{H^s}\le C|S|_{H^s}$ rather for $|u^{n}|_{H^{s+1}}$
small, hence involves loss of derivatives.

Usually, for a conservative equation $u_t+f(u)_x=S$, this is no problem,
since we are free to write it in nonconservative form
$u_t+ df(u)u_x=S$.
In the present case, however, it is essential for the key
Chapman--Enskog estimation of the macroscopic variable $u$
that we write the
first row of our equation in integrated form
$f(u,v)=s$,
enforcing a linearization $A_{11}u+A_{12}v= \tilde s$.
But, in the part of our argument in which we control microscopic
variables by energy estimates, we differentiate this equation
and group it with the second row, thus leading
to a partially {\it conservative} form in which the energy estimates
lose a derivative.

That is, the Chapman--Enskog part of our argument does not seem to
be compatible with the nonconservative form needed to close energy
estimates without losing a derivative.  We have not been able to find
a direct way around this (using some alternative scheme), and so for
the moment Nash--Moser iteration appears essential for the argument.

\end{subsection}

\end{section}


\begin{thebibliography}{GMWZ2}

{\footnotesize 

\bibitem{AG} S. Alinhac and P. G\'erard, 
{\it Op\'erateurs pseudo-diff\'erentiels et th\'eor\`eme de Nash-Moser.}
Savoirs Actuels. InterEditions, Paris; \'editions du CNRS, Meudon, 1991. 190 pp. 
%
%

\bibitem{AL} B. Alvarez-Samaniego and D. Lannes, {\it A Nash-Moser theorem for singular evolution equations. Application to the Serre and Green-Naghdi equations.} Indiana Univ. Math. J.  57  (2008),  no. 1, 97--131.

%

\bibitem{DY} A. Dressel and W.-A. Yong,
{\it Existence of traveling-wave solutions for hyperbolic systems 
of balance laws,} 
Arch. Ration. Mech. Anal. 182 (2006), no. 1, 49--75. 


\bibitem{H} R. S. Hamilton, {\it The inverse function theorem of Nash and Moser.}
Bull. Amer. Math. Soc. (N.S.) 7 (1982), no. 1, 65--222. 


\bibitem{IPT} G. Iooss, P. Plotnikov, J. Toland, \emph{Standing waves on an infinitely deep perfect fluid under gravity,} Arch. Ration. Mech. Anal. 177 (2005), no. 3, 367Ð478. 

\bibitem{JMR} J.-L. Joly, G. M\'etivier, J. Rauch, \emph{Transparent nonlinear geometric optics and Maxwell-Bloch equations}, J. Diff. Eq.,  vol. 166 (2000), 175-250. 

\bibitem{L} D. Lannes, {\it Sharp estimates for pseudo-differential operators with symbols of limited smoothness and commutators.}  J. Funct. Anal.  232  (2006),  no. 2, 495--539.

\bibitem{N} J. Nash, {\it The imbedding problem for Riemannian manifolds.}
Ann. of Math. (2) 63 (1956), 20--63.

%
%
%

\bibitem{MaZ1} C. Mascia and K. Zumbrun,
{\it Pointwise Green's function bounds and stability of relaxation
shocks}. Indiana Univ. Math. J. 51 (2002), no. 4, 773--904.

\bibitem{MaZ2} C. Mascia and K. Zumbrun,
{\it Stability of large-amplitude shock profiles of
general relaxation systems},
SIAM J. Math. Anal. 37 (2005), no. 3, 889--913.


\bibitem{MaZ3} C. Mascia and K. Zumbrun, 
{\it Spectral stability of weak relaxation shock profiles},
Comm. Partial Differential Equations 34 (2009), no. 1-3, 119Ð136. 
 
 
\bibitem{Me} G. M\'etivier \emph{Para-differential Calculus and Applications to the Cauchy Problem
for Nonlinear Systems}, Centro di Ricerca Matematica Ennio De Giorgi (CRM) Series, 5. Edizioni della Normale, Pisa,  2008. xii+140 pp.

\bibitem{MR} G. M\'etivier, J. Rauch, \emph{Dispersive stabilization}, Bull. Lond. Math. Soc. 42 (2010), no. 2, 250--262. 


\bibitem{MZ0} G. M\'etivier, K. Zumbrun, \emph{Large viscous boundary layers for noncharacteristic nonlinear hyperbolic problems}, Mem. Amer. Math. Soc., vol. 175, no. 826 (2005). 


\bibitem{MZ1} G. M\'etivier, and K. Zumbrun, 
\textit{Existence of semilinear relaxation shocks},
Journal de Math\'ematiques Pures et Appliqu\'ees, Volume 92, Issue 3, September 2009, Pages 209-231.

\bibitem{MZ2} G. M\'etivier, and K. Zumbrun, 
\textit{Existence and sharp localization in velocity of
small-amplitude Boltzmann shocks}, Kinetic and Related Models, volume 2, number 4, December 2009, pages 667-705.

%
%
%
%

%
%
%
%

\bibitem{MTZ} G. M\'etivier, B. Texier, and K. Zumbrun,
{\it Existence of quasilinear relaxation shock profiles,}
to appear in Annales Scientifiques de la Facult\'e des Sciences de Toulouse (2011). 

\bibitem{Mo} J. Moser, {\it A rapidly convergent iteration method and non-linear partial differential equations. I.}
Ann. Scuola Norm. Sup. Pisa (3) 20 1966 265--315.  {\it A rapidly convergent iteration method and non-linear differential equations. II.}
Ann. Scuola Norm. Sup. Pisa (3) 20 1966 499--535. 

%


\bibitem{RY} B.P. Rynne and M.A. Youngson,
{\it Linear functional analysis,}
Second edition, Springer Undergraduate Mathematics Series. 
Springer-Verlag London, Ltd., London, 2008. x+324 pp.
%

%

\bibitem{T1} B. Texier, {\it Derivation of the Zakharov equations.}  Arch. Ration. Mech. Anal.  184  (2007),  no. 1, 121--183.

%

%

\bibitem{TZ2} B. Texier and K. Zumbrun, {\it 
Galloping instability of viscous shock waves.} Physica D 237 (2008), no. 10-12, 1553--1601.

\bibitem{XSR} X. Saint-Raymond, {\it A simple Nash-Moser implicit function theorem.}  Enseign. Math. (2)  35  (1989),  no. 3-4, 217--226.

\bibitem{YZ} W.-A. Yong and K. Zumbrun
{\it Existence of relaxation shock profiles for hyperbolic
conservation laws},
SIAM J. Appl. Math. 60 (2000) no.5, 1565--1575.}


\end{thebibliography}
\end{document}